\documentclass[preprint,11pt]{elsarticle}
\usepackage{stmaryrd}
\usepackage{amsmath,amssymb,amsthm}
\usepackage{enumitem}
\usepackage{url}
\usepackage[colorlinks=true,linkcolor=blue,citecolor=blue,urlcolor=blue]{hyperref}

\def \Z{\Bbb Z}
\def \C{\Bbb C}

\def \wt{{\textit wt}}

\def \Span{{\rm Span}}

\def \Res{{\rm Res}}
\def \End{{\rm End}}

\def \Id{{\rm Id}}
\def \Hom{{\rm Hom}}

\def \<{\langle}
\def \>{\rangle}

\def \1{{\bf 1}}

\def \({{\rm (}}
\def \){{\rm )}}

\def \1{{\bf 1}}

\def\Hom{{\rm Hom}}

\def\Res{{\rm Res}}
\def\Span{{\rm Span}}
\def\soc{{\rm soc}}
\newtheorem{Theorem}{Theorem}[section]
\newtheorem{Proposition}[Theorem]{Proposition}
\newtheorem{Lemma}[Theorem]{Lemma}

\newtheorem{Corollary}[Theorem]{Corollary}
\newtheorem{Remark}[Theorem]{Remark}
\newtheorem{Main Theorem}[Theorem]{Main Theorem}

\newtheorem{Definition}[Theorem]{Definition}
\begin{document}

\begin{frontmatter}

\title{A dual theory of Zhu's theory}

\author[addr1]{Hao Wang\footnote{Email address: whaomath@nwu.edu.cn. Supported by a NSFC grant 12001426}}

\address[addr1]{School of Mathematics, Northwest University, Xi'an 710127, Shaanxi, China}

\begin{abstract}
We establish a dual theory of Zhu's $A(\mathcal {V})$-theory, i.e., for a graded vertex operator coalgebra $V$, the coassociative coalgebra $C(V)$ are investigated, any admissible $V$-comodule gives a $C(V)$-comodule, and vice versa. We also prove that $V$ is corational, which means every admissible $V$-comodule is completely reducible, if and only if its dual vertex operator algebra $V'$ is rational.
\end{abstract}

\begin{keyword}
algebras, coalgebras, vertex operator algebras, vertex operator coalgebras, admissible modules, admissible comodules, rationality, corationality.
\MSC[2020] 17B69
\end{keyword}

\end{frontmatter}

\section{Introduction}
The idea of duality is ubiquitous in mathematics. For example, in analysis, when we have a function $y=f(x)$, we also want to know what the inverse function is, since the inverse function may contain certain functional properties that the original function cannot describe explicitly. In general, when we have a map from a set $A$ to another set $B$, if we reverse the orientation of the arrow, we get an inverse map. For a linear space, we are also interested in its dual space, since the dual space also can reflect some properties of the original space.

In classical algebraic theory, associative algebras and coassociative coalgebras are a pair of concepts which is dual to each other. To see this, in the definition of associative algebras obtained from commute diagrams, if we reverse the orientations of all arrows, we get the definition of coassociative coalgebras obtained from commute diagrams. Furthermore, if $A$ is a finite dimensional associative algebra, its dual spase is a coassociative coalgebra, and vice versa. When we put associative algebra structure and coassociative coalgebra structure on a same space and satisfying certain compatible conditions, we get a bialgebra and Hopf algebra, \cite{DNR}.

Lie algebras and Lie coalgebras are another pair of concepts which is dual to each other, and they also have the properties which are similar to the properties between associative algebras and coassociative coalgebras, \cite{M,M1}. There also exists the concept of Lie bialgebras, see \cite{NT,M2}.

Vertex operator algebras is another algebraic structure which describes the symmetric properties of $2$-dimensional conformal field theory, \cite{BPZ,LL}. In certain point of view, it looks like both associative algebras and Lie algebras. Hence, there is a natural question whether there exists the concept of vertex operator coalgebras dual to vertex operator algebras and vice versa. In \cite{H,H1,H2}, the author introduced the concept of vertex operator coalgebras and studied a lot of its properties. Several examples of vertex operator coalgebras also were constructed in those articles. Furthermore, the author also proved that the restricted dual of a vertex operator algebra gives a structure of a vertex operator coalgebra and vice versa. Hence this question has a positive answer.

Now we have vertex operator algebras and vertex operator coalgebras, there is another question whether there exists the concept of vertex operator bialgebras, such that it is both a vertex operator algebra and a vertex operator coalgebra with certain compatible conditions. In \cite{HLX,JKLT,L4,L2,L1}, the authors introduced the concept of vertex bialgebras and studied a lot of its properties. This vertex (operator) bialgebra is a vertex (operator) algebra equipped with a structure of coassociative coalgebra satisfying several compatible conditions. We can see that this kind of vertex (operator) bialgebra is not a vertex operator coalgebra. Hence this question is unsolved, and it is still worth to working on it. We will not solve this question in current paper, and are going to work on it in future.

For a vertex operator algebra $\mathcal {V}$, Zhu constructed an associative algebra $A(\mathcal {V})$ in \cite{Z}. In vertex operator algebras theory, Zhu's $A(\mathcal {V})$-theory plays a crucial role in the study of representation theory, since it builds a bridge between representations of vertex operator algebra $V$ and representations of its Zhu algebra $A(\mathcal {V})$. This theory has a lot of generalizations, see \cite{DLM1,DLM2,DLM3,W}. By Hubbard's theory, we know vertex operator algebras and vertex operator coalgebras are dual to each other, associative algebras and coassociative coalgebras are dual to each other. By Zhu's theory, we can get an associative algebra from a vertex operator algebra. Hence there is a question whether there is a theory dual to Zhu's $A(\mathcal {V})$-theory, i.e., whether there exists a coassociative coalgebra constructed from a vertex operator coalgebra, such that this coassociative coalgebra builds a bridge between representations of vertex operator coalgebra and its associated coassociative coalgebra. This is our motivation.

In Zhu's theory, the associative algebra $A(\mathcal {V})$ is defined to be a quotient space of $\mathcal {V}$. When we take the dual, quotient space becomes a subspace. Hence, for a (graded) vertex operator coalgebra $V$, first we take a subspace $C(V)$ of $V$. then we define a well-defined linear map $\Delta:C(V)\rightarrow C(V)\otimes C(V)$. Taking the restriction of the counit $c$ of $V$ on $C(V)$, still denoted by $c$, we get a triple $(C(V),\Delta,c)$. Then we prove that $\Delta$ is coassociative and $c$ satisfies the left and right counit identities in the definition of coassociative coalgebras. Hence, $(C(V),\Delta,c)$ is a coassociative coalgebra. After getting the coassociative coalgebra $(C(V),\Delta,c)$, we naturally want to study the representations. Let a $\mathbb{N}$-graded vector space $\mathcal {M}$ be an admissible $V$-comodule, we prove that the top level $M_0$ is a $C(V)$-comodule. Hence, we get a functor $\Omega$ from admissible $V$-comodules category to $C(V)$-comodules category. This completes the first half of dual Zhu's theory.

Conversely, we also hope to get an admissible $V$-comodule $\mathcal {L}(M)$ from a given $C(V)$-comodule $M$. This implies $M^*$ is a $C(V)^*$-module. From \cite{H1}, we know the restricted dual $V'$ of $V$ is a vertex operator algebra. By Zhu's theory, there is an associative algebra $A(V')$. First, we prove that there is a well-defined algebraic homomorphism $\Phi:A(V')\rightarrow C(V)^*$. Under certain assumptions, we prove $\Phi$ is an isomorphism. This means $M^*$ is also an $A(V')$-module. Using Zhu's theory, we get an admissible $V'$-module $L(M^*)$. Then the restricted dual $L(M^*)'$ is an admissible $V$-comodule. We define $\mathcal {L}(M)$ to be a sub quotient of $L(M^*)'$. Hence we get a functor $\mathcal {L}$ from $C(V)$-comodules category to admissible $V$-comodules category such that $\Omega\circ\mathcal {L}=\Id$. Furthermore, $\mathcal {L}$ also sends simple objects to simple objects. This completes the other half of dual Zhu's theory.

Using this dual theory, we prove that if $V$ is a corational graded vertex operator coalgebra, i.e., every admissible $V$-comodule is completely reducible, then $C(V)$ is a cosemisimple coassociative coalgebra.

Now we want to study the relations between corationality of a graded vertex operator coalgebra $V$ and rationality of its dual vertex operator algebra $V'$. For this purpose, we need to introduce the higher level dual theory of Zhu's theory. This is completely parallel to $C(V)$-theory. First we define the higher level coassociative algebras $C^k(V)$ for all $k\in\mathbb{N}$, and prove that there is an algebraic homomorphism $\Phi^k:A_k(V')\rightarrow C^k(V)^*,$ where $A_k(V')$ is the higher level Zhu algebra of the dual vertex operator algebra $V'$. Under certain assumptions, we prove $\Phi^k$ are isomorphisms for all $k$. Second we introduce functors $\Omega^k$ and $\mathcal {L}^k$, and prove that $\Omega^k/\Omega^{k-1}\circ\mathcal {L}^k=\Id.$ Then we prove that $V$ is corational if and only if all $C^k(V)$ are cosemisimple. Using a result of \cite{DLM1}, we prove that $V$ is a corational graded vertex operator coalgebra if and only if its dual $V'$ is a rational vertex operator algebra. This is a generalization of a classical result, i.e., $A$ is a semisimple associative algebra if and only if $A^*$ is a cosemisimple coassociative coalgebra.

This paper is organised as follows: In section 2, we recall definitions and properties of associative algebras and coassociative coalgebras, their modules and comodules, and relations between them. We also recall definitions and properties of vertex operator algebras and graded vertex operator coalgebras, admissible modules and admissible comodules, and relations between them. In section 3, we prove that for a given graded vertex operator coalgebra $V$, there is a coassociative coalgebra $C(V)$. Section 4 is the construction of functor $\Omega$. In section 5, we work on the dual relation between $C(V)$ and $A(V')$, we also prove that they are dual to each other under certain assumptions. In section 6, we give the construction of functor $\mathcal {L}$, and prove that $\Omega\circ\mathcal {L}=\Id.$ Furthermore, we prove that $\mathcal {L}(M)$ is not too big, i.e., if $M$ has countable dimension, so are all the homogeneous subspaces of $\mathcal {L}(M)$, and $\mathcal {L}(M)_0=M.$ We also prove that if $V$ is corational, then $C(V)$ is a finite-dimensional cosemisimple coassociative coalgebra. Section 7 is about the higher level coassociative coalgebra $C^k(V)$ associated to a graded vertex operator coalgebra $V$. After introducing the definitions of $C^k(V)$, we first prove that there is a filtration of coassociative coalgebras $C(V)=C^0(V)\subseteq\cdots\subseteq C^k(V)\subseteq\cdots.$
Then following the statements of Section 3-6, we prove that $V$ is corational if and only if $V'$ is rational. Section 8 is a by product, we prove that there is an Lie coalgebra associated to a graded vertex operator coalgebra.

\section{Preliminaries}

In this paper, we work over complex field $\mathbb{C},$ all vector spaces, linear maps will be over $\mathbb{C},$ $\otimes$ means $\otimes _\mathbb{C}$. $\Res_zf(z)$ is the coefficient of $z^{-1}$. Furthermore, we have $$\Res_z\frac{d}{dz}f(z)\cdot g(z)=-\Res_zf(z)\cdot\frac{d}{dz}g(z).$$
For any vector spaces $V,W$, the flipping map $\tau:V\otimes W\rightarrow W\otimes V$ is defined by $$\tau(v\otimes w)=w\otimes v,$$ for any $v\in V,w\in W.$ In this paper, we always view $V\otimes \mathbb{C}=\mathbb{C}\otimes V=V$ naturally. For $n\in\mathbb{C}$, $(z_1+z_2)^n=\sum_{i\geq0}\tbinom{n}{i}z_1^{n-i}z_2^i.$ Let $V$ be a vector space, denote $$V^*=\Hom(V,\mathbb{C})$$ by the dual space of $V$. Let $(\cdot,\cdot)$ be the natural pair between $V^*$ and $V$.

\subsection{Algebras and coalgebras}
In this subsection, we recall several concepts and results about algebras and coalgebras.
\begin{Definition}\cite{DNR}\label{DA2.1}
  An associative algebra is a triple $(A,\mu,\eta)$, where $A$ is a vector space, $\mu: A\otimes A\rightarrow A$ and $\eta:\mathbb{C}\rightarrow A$ are linear maps such that:

  (i) For any $a\in A$, we have $\mu(a\otimes\eta(1))=\mu(\eta(1)\otimes a)=a$. They are called unit identities.

  (ii) For any $a,b,c\in A$, the following identity $$\mu(a\otimes\mu(b\otimes c))=\mu(\mu(a\otimes b)\otimes c)$$ holds. This identity is called associativity. We write $\mu(a\otimes b)=ab$.
\end{Definition}

\begin{Definition}\cite{DNR}\label{DA2.2}
  Let $(A,\mu,\eta)$ be an algebra and $B\subseteq A$ a subspace of $A$. If $(B,\mu,\eta)$ is also an algebra, we say it is a sub algebra of $A$.

  If $\mu(A\otimes B)\subseteq B$, we say $B$ is a left ideal of $A$.

  If $\mu(B\otimes A)\subseteq B$, we say $B$ is a right ideal of $A$.

  $B$ is an ideal if it is both a left ideal and right ideal.

  $A$ is called simple if there is no nontrivial ideal. $A$ is called semisimple if $A$ is a direct sum of simple ideals.
\end{Definition}

\begin{Definition}\cite{DNR}
Let $(A,\mu_A,\eta_A)$ and $(B,\mu_B,\eta_B)$ be two algebras. A linear map $\phi:A\rightarrow B$ is called a homomorphism if $$\phi\circ\eta_A=\eta_B,\phi\circ\mu_A=\mu_B\circ\phi\otimes\phi.$$

  Similarly, we have the definitions of monomorphism, epimorphism and isomorphism, etc.
\end{Definition}

\begin{Definition}\cite{DNR}\label{DAM2.1}
  Let $(A,\mu,\eta)$ be an algebra, $N$ a vector space and $\varrho:A\otimes N\rightarrow N$ a linear map. $(N,\varrho)$ is called a left module of $A$ if

  (i) For any $n\in N$, we have $\varrho(1\otimes n)=n$. This is called left unit identity.

  (ii) For any $a,b\in A,n\in N$, we have $\varrho(a\otimes\varrho(b\otimes n))=\varrho(\mu(a\otimes b)\otimes n)$. This is called associative identity. We write $\varrho(a\otimes n)=an.$
\end{Definition}

\begin{Definition}\cite{DNR}\label{DAM2.2}
  Let $(A,\mu,\eta)$ be an algebra and $(N,\varrho_N)$ a left module of $A$. Let $N^1$ be a sub space of $N$. If $(N^1,\varrho_N|_{N^1})$ is also a left module of $A$, we say $N^1$ is a sub module of $N$.

  $N$ is called simple if there is no nontrivial sub module. $N$ is called semisimple if $N$ is a direct sum of simple sub modules.

  Let $(N^1,\varrho_{N^1})$ and $(N^2,\varrho_{N^2})$ be two left modules of $A$. A linear map $\psi:N^1\rightarrow N^2$ is called an $A$-module homomorphism if $$\psi\circ\varrho_{N^1}=\varrho_{N^2}\circ(\Id\otimes \psi).$$

  Similarly, we have the definitions of monomorphism, epimorphism and isomorphism, etc.
\end{Definition}

\begin{Definition}\cite{DNR}\label{DC2.1}
  A coassociative coalgebra is a triple $(C,\Delta,\epsilon)$, where $C$ is a vector space, $\Delta:C\rightarrow C\otimes C$ and $\epsilon:C\rightarrow \mathbb{C}$ are linear maps such that:

  (i) For any $a\in C$, the following identities $$(\Id_C\otimes \epsilon)\circ \Delta(a)=a=(\epsilon\otimes \Id_C)\circ \Delta(a)$$ hold. They are called counit identities.

  (ii) For any $a\in C$, the following identity $$(\Id_C\otimes \Delta)\circ \Delta(a)=(\Delta\otimes \Id_C)\circ \Delta(a)$$ holds. This identity is called coassociativity.
\end{Definition}

\begin{Remark}
By coassociativity, we will write $\Delta(a)=\sum a'\otimes a''$ for simplicity, and this will cause no confusion.
\end{Remark}

\begin{Definition}\cite{DNR}\label{DC2.2}
  Let $(C,\Delta,\epsilon)$ be a coalgebra and $D\subseteq C$ a subspace of $C$. If $\Delta(D)\subseteq D\otimes D$, we say $(D,\Delta,\epsilon)$ is a sub coalgebra of $C$.

  If $\Delta(D)\subseteq C\otimes D$, we say $(D,\Delta,\epsilon)$ is a left coideal of $C$.

  If $\Delta(D)\subseteq D\otimes C$, we say $(D,\Delta,\epsilon)$ is a right coideal of $C$.

  If $\Delta(D)\subseteq C\otimes D+D\otimes C$ and $\epsilon(D)=0$, we say $(D,\Delta,\epsilon)$ is a coideal of $C$.

  $C$ is called simple if there is no nontrivial sub coalgebra. $C$ is called cosemisimple if $C$ is a direct sum of simple sub coalgebras.
\end{Definition}

\begin{Definition}\cite{DNR}\label{DCM2.1}
  Let $(C,\Delta,\epsilon)$ be a coalgebra, $M$ a vector space and $\Delta_M:M\rightarrow C\otimes M$ a linear map. $(M,\Delta_M)$ is called a left comodule of $C$ if

  (i) For any $m\in M$, the following identity $$(\epsilon\otimes \Id_M)\circ \Delta_M(m)=m$$ holds. This is called left counit identity.

  (ii) For any $m\in M$, the following identity $$(\Id_C\otimes \Delta_M)\circ \Delta_M(m)=(\Delta\otimes \Id_M)\circ \Delta_M(m)$$ holds. This is called coassociative identity.
\end{Definition}

\begin{Definition}\cite{DNR}
Let $(C,\Delta_C,\epsilon_C)$ and $(D,\Delta_D,\epsilon_D)$ be two coalgebras. A linear map $\phi:C\rightarrow D$ is called a homomorphism if $$\epsilon_D\circ\phi=\epsilon_C,\Delta_D\circ\phi=\phi\otimes\phi\circ\Delta_C.$$

  Similarly, we have the definitions of monomorphism, epimorphism and isomorphism, etc.
\end{Definition}

\begin{Remark}
By coassociativity, we will write $\Delta_M(m)=\sum m'\otimes m''$ for simplicity, and this will cause no confusion. In this notation, we have $m'\in C,m''\in M.$
\end{Remark}

\begin{Definition}\cite{DNR}\label{DCM2.2}
  Let $(C,\Delta,\epsilon)$ be a coalgebra and $(M,\Delta_M)$ a left comodule of $C$. Let $M^1$ be a sub space of $M$. If $(M^1,\Delta_M|_{M^1})$ is also a left comodule of $C$, we say $M^1$ is a sub comodule of $M$.

  $M$ is called simple if there is no nontrivial sub comodule. $M$ is called cosemisimple if $M$ is a direct sum of simple sub comodules.

  Let $(M^1,\Delta_{M^1})$ and $(M^2,\Delta_{M^2})$ be two left comodules of $C$. A linear map $\psi:M^1\rightarrow M^2$ is called a $C$-comodule homomorphism if $$(\Id\otimes \psi)\circ \Delta_{M^1}=\Delta_{M^2}\circ \psi.$$

  Similarly, we have the definitions of monomorphism, epimorphism and isomorphism, etc.
\end{Definition}

\begin{Lemma}\cite{DNR}\label{coss}
$C$ is cosemisimple if and only if every $C$-comodule is cosemisimple.
\end{Lemma}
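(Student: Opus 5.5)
The plan is to reduce both directions to two standard facts about comodules: the fundamental theorem of coalgebras (every element of a coalgebra, and of a comodule, lies in a finite-dimensional subcoalgebra, respectively subcomodule), and the simple subcoalgebra decomposition of $C$ viewed as a left comodule over itself via $\Delta$.

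\emph{($\Leftarrow$)} Assume every $C$-comodule is cosemisimple and apply this to $C$ itself, regarded as a left comodule via $\Delta_C=\Delta$. Then $C=\bigoplus_i S_i$, a direct sum of simple left coideals. First I would show that each isotypic component (the sum of all $S_i$ isomorphic to a fixed simple comodule $S$) is a subcoalgebra. The idea is that the coefficient space of $S$, namely the span of $(\Id\otimes f)\Delta_S(s)$ for $s\in S$ and $f\in S^*$, is a finite-dimensional subcoalgebra $C_S$, and that the isotypic component equals $C_S$. One inclusion uses the left counit identity, which places each element of a left coideal inside the coefficient space of that coideal; the other uses that $C_S$ is itself a left comodule all of whose simple subcomodules are copies of $S$. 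Second, $C_S$ is a simple coalgebra. Its dual algebra $C_S^*$ acts faithfully on the finite-dimensional space $S$, and $S$ is a simple $C_S^*$-module. By Burnside's theorem (we work over $\mathbb{C}$), $C_S^*\cong\End(S)$, which is simple. Dualizing, $C_S$ has no nontrivial subcoalgebras, since subcoalgebras of $C_S$ correspond to quotients of $C_S^*$ by ideals. Hence $C=\bigoplus_S C_S$ is cosemisimple.

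\emph{($\Rightarrow$)} Assume $C=\bigoplus_j D_j$ with each $D_j$ a simple subcoalgebra. Each $D_j$ is finite dimensional by the fundamental theorem, so $D_j^*$ is a finite-dimensional algebra with no nontrivial ideals, i.e. a simple algebra, hence a matrix algebra. Now let $M$ be a $C$-comodule and set $M_j=\{m:\Delta_M(m)\in D_j\otimes M\}$. Using the counit maps $\epsilon_j=\epsilon|_{D_j}$, extended by zero on the other summands, one checks that $M=\bigoplus_j M_j$: the projection onto $M_j$ is $m\mapsto(\epsilon_j\otimes\Id)\Delta_M(m)$, and coassociativity makes these projections orthogonal idempotents whose images are subcomodules. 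Each $M_j$ is a rational $D_j^*$-module, that is, an arbitrary module over the semisimple algebra $D_j^*$, and $D_j^*$-submodules coincide with $D_j$-subcomodules. So each $M_j$, and therefore $M$, is a direct sum of simple subcomodules.

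The main obstacle will be the first step of ($\Leftarrow$): identifying the isotypic components of $C$ with coefficient subcoalgebras and verifying that these are subcoalgebras rather than merely left coideals. If this becomes messy, I would instead dualize: cosemisimplicity of all comodules implies that the rational $C^*$-modules form a semisimple category. I would then apply this to each finite-dimensional subcoalgebra $D$, whose comodules are $D^*$-modules, to obtain that $D^*$ is semisimple and hence that $D$ is a direct sum of simple subcoalgebras. Finally, I would glue these decompositions along the directed union of the finite-dimensional subcoalgebras $D$, using uniqueness of the simple subcoalgebras.
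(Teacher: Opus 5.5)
Your argument is correct, and it is a genuinely different route only in the sense that the paper has no route: it gives no proof of this lemma and simply quotes it from \cite{DNR}. Your proposal is therefore a self-contained substitute rather than a reconstruction of anything in the text.

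\textbf{Comparison with the standard argument.} Your ($\Leftarrow$) direction uses the ingredients normally used to identify the socle of $C$ with the sum of its simple subcoalgebras: coefficient coalgebras, Burnside's theorem, and the correspondence $D\mapsto D^{\perp}$ between subcoalgebras of a finite-dimensional coalgebra and ideals of its dual. For ($\Rightarrow$) the usual shortcut is different from yours.
\begin{itemize}
\item $\Delta_M$ is an injective comodule map from $M$ into $C\otimes M$ with structure $\Delta\otimes\Id_M$. This is exactly coassociativity together with the counit identity. So $M$ embeds into a direct sum of copies of $C$.
\item Each $D_j$ is a matrix coalgebra, so it splits into simple left coideals (its columns). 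Hence $C$ is a cosemisimple comodule, and so is every subcomodule of $C^{(I)}$.
\end{itemize}
That route avoids your block decomposition, but it needs the fact that subcomodules of cosemisimple comodules are cosemisimple. Your route needs Wedderburn for $D_j^*$ and the idempotents $\epsilon_j$. In return it gives the explicit decomposition $M=\bigoplus_j M_j$ of every comodule.

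\textbf{One ordering repair in ($\Leftarrow$).} You claim that every simple subcomodule of $C_S$ is a copy of $S$. That claim already presupposes $C_S^*\cong\End(S)$, which you only prove in the next step. Either swap the two steps, or bypass Burnside entirely here as follows.
\begin{itemize}
\item Write $\Delta_S(s_j)=\sum_i c_{ij}\otimes s_i$. Coassociativity gives $\Delta(c_{lj})=\sum_i c_{ij}\otimes c_{li}$.
\item So for each $l$, the map $s_j\mapsto c_{lj}$ is a comodule map $S\to C$. It is nonzero because $\epsilon(c_{ll})=1$.
\item $C_S$ is the sum of the images of these maps, and each image is isomorphic to $S$.
\end{itemize}
Combined with your counit argument, this shows that the $S$-isotypic part of $C$ equals $C_S$. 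You should also cite explicitly the standard facts that this isotypic part equals $\bigoplus_{S_i\cong S}S_i$, and that distinct isotypic components are independent.
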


\begin{Proposition}\cite{DNR,K}\label{A-C}
Let $(A,\mu,\eta)$ be an algebra and $(C,\Delta,\epsilon)$ a coalgebra. Let $(M,\Delta_M)$ be a left comodule of $C$. Then, we have

(i) $C^*$ is an algebra with $$(\mu_{C^*}(f\otimes g),a)=\sum f(a')g(a''),\eta_{C^*}(1)=\epsilon,$$ where $f,g\in C^*, a\in C$.

(ii) $A^\circ$ is a coalgebra with $$(\Delta_{A^\circ}(f),a\otimes b)=f(ab),\epsilon_{A^\circ}(f)=f(\eta(1)),$$ where $f\in A^\circ, a,b\in A.$ If $\dim A<\infty,$ then $A^\circ=A^*.$

(iii) $M^*$ is a left $C^*$-module with $(\varrho_{M^*}(f\otimes m^*))m=\sum f(m')m^*(m'')$, where $f\in C^*,m^*\in M^*,m\in M.$
\end{Proposition}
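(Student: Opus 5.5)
Each of the three parts reduces to checking the defining identities (Definitions \ref{DA2.1}, \ref{DC2.1}, \ref{DAM2.1}) by pairing with elements and using the corresponding coalgebra, algebra or comodule axioms. No deep input is needed; the whole argument is the standard dualization.

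For (i), I pair $\mu_{C^*}(\mu_{C^*}(f\otimes g)\otimes h)$ and $\mu_{C^*}(f\otimes\mu_{C^*}(g\otimes h))$ with $a\in C$. This gives $\sum f(a_{(1)})g(a_{(2)})h(a_{(3)})$, computed once via $(\Delta\otimes\Id_C)\circ\Delta(a)$ and once via $(\Id_C\otimes\Delta)\circ\Delta(a)$. Coassociativity in Definition \ref{DC2.1}(ii) makes the two agree. For the unit, $(\mu_{C^*}(f\otimes\epsilon),a)=\sum f(a')\epsilon(a'')=f((\Id_C\otimes\epsilon)\Delta(a))=f(a)$ by the counit identity, and symmetrically on the other side.

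For (ii), I take $A^\circ=\{f\in A^*:\ker f \text{ contains a cofinite ideal}\}$. The key step is that for $f\in A^\circ$ the functional $a\otimes b\mapsto f(ab)$ on $A\otimes A$ lies in the image of the injection $A^*\otimes A^*\hookrightarrow(A\otimes A)^*$, and in fact in $A^\circ\otimes A^\circ$. To see this, let $I\subseteq\ker f$ be a cofinite ideal. Then $f\circ\mu$ factors through $A/I\otimes A/I$, which is finite-dimensional. A finite dual basis of $(A/I)^*$ therefore gives $f(ab)=\sum_i g_i(a)h_i(b)$ with $g_i,h_i$ vanishing on $I$, hence $g_i,h_i\in A^\circ$. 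Coassociativity then follows by pairing with $a\otimes b\otimes c$ and using associativity $f((ab)c)=f(a(bc))$, together with injectivity of $A^*\otimes A^*\otimes A^*\to(A\otimes A\otimes A)^*$. The counit identities are $\sum g_i(a)h_i(1)=f(a\cdot1)=f(a)$ and the mirror statement. If $\dim A<\infty$, then $0$ is a cofinite ideal, so $A^\circ=A^*$.

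For (iii), I check the left unit identity as $(\epsilon\cdot m^*)(m)=\sum\epsilon(m')m^*(m'')=m^*((\epsilon\otimes\Id_M)\Delta_M(m))=m^*(m)$. For associativity I evaluate both $(f\cdot(g\cdot m^*))(m)$ and $((fg)\cdot m^*)(m)$ on $m$. They become $\sum f(m')g(m''{}')m^*(m''{}'')$ and $\sum f(m'{}')g(m'{}'')m^*(m'')$ respectively, and these are equal by the comodule coassociativity in Definition \ref{DCM2.1}(ii).

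The only genuinely nontrivial point is the factorization step in (ii): showing that $\Delta_{A^\circ}$ actually lands in $A^\circ\otimes A^\circ$ rather than merely in $(A\otimes A)^*$.
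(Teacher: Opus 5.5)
Your proof is correct, and complete up to routine checks such as $A^\circ$ being a subspace, which holds because $\ker(f_1+f_2)\supseteq I_1\cap I_2$ and $I_1\cap I_2$ is again a cofinite ideal. The paper gives no proof of its own and simply cites \cite{DNR,K}; your argument is the standard dualization found there, and factoring $f\circ\mu$ through $A/I\otimes A/I$ is the usual way to see that $\Delta_{A^\circ}(f)\in A^\circ\otimes A^\circ$.
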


\subsection{Vertex operator algebras and coalgebras}
In this subsection, we recall several concepts and results about vertex operator algebras and coalgebras.
 \begin{Definition}\cite{LL}
  Let $\mathcal {V}=\oplus_{s\in \mathbb{Z}}\mathcal {V}_s$ be a $\mathbb{Z}$-graded vector space with $\mathcal {V}_s=0$, $0\gg s$, $\dim \mathcal {V}_s<\infty,\forall s\in \mathbb{Z}$, and $\textbf{1}\in \mathcal {V}_0,\omega\in \mathcal {V}_2,Y(\cdot,z):\mathcal {V}\otimes \mathcal {V}\rightarrow \End(\mathcal {V})[[z,z^{-1}]],u\otimes v\mapsto Y(u,z)v=\sum_{t\in\mathbb{Z}}u_tvz^{-t-1}$, where $u_t\in \End(\mathcal {V})$. Then, $(\mathcal {V},Y,\textbf{1},\omega)$ is called a vertex operator algebra if the following hold:

  (i) For $ u,v\in \mathcal {V},$ $u_tv=0$, if $t\gg0$.

  (ii) $Y(\textbf{1},z)v=v,\lim_{z\rightarrow0}Y(v,z)\textbf{1}=v$, for $ v\in \mathcal {V}$.

  (iii) Write $Y(\omega,z)=\sum_{t\in\mathbb{Z}}\omega_tz^{-t-1}=\sum_{t\in\mathbb{Z}}L(t)z^{-t-2}$, then
  \begin{eqnarray*}
  &&\mathcal {V}_s=\{v\in \mathcal {V}|L(0)v=sv\},~Y(L(-1)v,z)=\frac{d}{dz}Y(v,z),\\
  &&[L(p),L(q)]=(p-q)L(p+q)+\delta_{p+q,0}\frac{p^3-p}{12}d,
  \end{eqnarray*}
  where $d\in\mathbb{C}$ is called central charge of $\mathcal {V}$. For $v\in \mathcal {V}_s$, $v$ is said to be homogeneous and the weight $\wt v$ of $v$ is defined to be $s$.

  (iv) For $ u,v\in \mathcal {V}$, we have
  \begin{eqnarray*}
  & &z_0^{-1}\delta(\frac{z_1-z_2}{z_0})Y(u,z_1)Y(v,z_2)-z_0^{-1}\delta(\frac{-z_2+z_1}{z_0})Y(v,z_2)Y(u,z_1)\\
  & &\ \ \ \ \ =z_1^{-1}\delta(\frac{z_2+z_0}{z_1})Y(Y(u,z_0)v,z_2).
  \end{eqnarray*}
  \end{Definition}

\begin{Definition}\cite{LL}
  Let $\mathcal {V}$ be a vertex operator algebra. A {\em weak  $\mathcal {V}$-module} $\mathcal {N}$ is a vector space equipped
with a linear map
\begin{align*}
Y_{\mathcal {N}}:\mathcal {V}&\to (\End \mathcal {N})[[z, z^{-1}]],\\
v&\mapsto Y_{\mathcal {N}}(v,z)=\sum_{s\in\Z}v_sz^{-s-1},\,v_s\in \End \mathcal {N},
\end{align*}
satisfying the following conditions: For any $u\in \mathcal {V},\ v\in \mathcal {V},\ w\in \mathcal {N}$ and $t\in \Z$,
\begin{align*}
&\ \ \ \ \ \ \ \ \ \ \ \ \ \ \ \ \ \ \ \ \ \ \ \ \ \ u_tw=0 \text{ for } t\gg0;\\
&\ \ \ \ \ \ \ \ \ \ \ \ \ \ \ \ \ \ \ \ \ \ \ \ \ \ Y_\mathcal {N}(\1, x)=\Id_\mathcal {N};\\
\begin{split}
&x_{0}^{-1}\delta\left(\frac{z_{1}-z_{2}}{z_{0}}\right)Y_{\mathcal {N}}(u,z_{1})Y_\mathcal {N}(v,z_{2})-z_{0}^{-1}\delta\left(
\frac{z_{2}-z_{1}}{-z_{0}}\right)Y_\mathcal {N}(v,z_{2})Y_\mathcal {N}(u,z_{1})\\
&\quad=x_{2}^{-1}\delta\left(\frac{z_{1}-z_{0}}{z_{2}}\right)Y_\mathcal {N}(Y(u,z_{0})v,z_{2}).
\end{split}
\end{align*}

A weak
 $\mathcal {V}$-module  $\mathcal {N}$ is called an \textit{admissible $\mathcal {V}$-module} if $\mathcal {N}$ has a $\Z_{\geq
0}$-gradation $\mathcal {N}=\bigoplus_{s\in\Z_{\geq 0}}N_s$ such
that
\begin{align*}\label{AD1}
v_tN_s\subset N_{\wt{v}+s-t-1}
\end{align*}
for any homogeneous $v\in \mathcal {V}$ and $s,\,t\in\Z$.

An admissible $\mathcal {V}$-module $\mathcal {N}$ is said to be
\textit{irreducible} if $\mathcal {N}$ has no non-trivial admissible
$\mathcal {V}$-submodule. When an admissible $\mathcal {V}$-module $\mathcal {N}$ is
direct sum of irreducible admissible submodules, $\mathcal {N}$ is called
\textit{completely reducible}.

$\mathcal {V}$ is called rational if every admissible $\mathcal {V}$-module is completely reducible.
\end{Definition}

\begin{Definition}\cite{H,H2}\label{Def2.17}
  A vertex coalgebra is a triple $(V,\Yup (z),c)$, where $V$ is a vector space, $c: V\rightarrow\mathbb{C}$ is a linear map, $\Yup (z):V\rightarrow V\otimes V[[z,z^{-1}]]$, $v\mapsto \sum_{k\in\mathbb{Z}}\Delta_k(v)z^{-k-1}$ is a linear map, which satisfy following conditions:

  (i) For any $v\in V$, we have $\Delta_k(v)$ is a finite sum and $\Delta_k(v)=0$ for $k<<0.$

  (ii) For any $v\in V$, we have
    \begin{eqnarray}\label{LU3.1}
      (c\otimes \Id_V)\circ\Yup (z)v=v,
    \end{eqnarray} and
    \begin{eqnarray}\label{CI3.2}
      (\Id_V\otimes c)\circ\Yup (z)v\in V[[z]], \label{CI3.2}\\
      \lim_{z\rightarrow 0}(\Id_V\otimes c)\circ\Yup (z)v=v.\label{CI3.3}
    \end{eqnarray}

  (iii) The following identity
    \begin{eqnarray}\label{JI3.4}
      &&z_0^{-1}\delta(\frac{z_1-z_2}{z_0})(\Id_V\otimes \Yup (z_2))\circ \Yup (z_1)-z_0^{-1}\delta(\frac{z_2-z_1}{-z_0})(\tau\otimes\Id_V)\circ(\Id_V\otimes \Yup (z_1))\circ \Yup (z_2)\nonumber\\
      &&\ \ \ \ \ =z_1^{-1}\delta(\frac{z_2+z_0}{z_1})(\Yup (z_0)\otimes \Id_V)\circ \Yup (z_2)
    \end{eqnarray}
  holds on $V$. This identity is also called Jacobi identity.

  $A$ vertex coalgebra $(V,\Yup(z),c)$ is called graded if there is a $\mathbb{Z}$-gradation $V=\oplus_{s\in\mathbb{Z}}V_s$ on $V$ such that each homogeneous space is finite dimensional and $V_s=0$ for $s<<0.$ View $V\otimes V$ as a $\mathbb{Z}$-graded vector space with natural gradation, i.e., $$(V\otimes V)_t=\oplus_{s\in\mathbb{Z}}V_{t-s}\otimes V_{s}.$$ $\Delta_k$ is a homogeneous linear map of degree $k+1$, i.e., for any $v\in V_s$, we have $$\Delta_k(v)\in (V\otimes V)_{s+k+1}.$$

  A $\mathbb{Z}$-graded vertex coalgebra is called a $\mathbb{Z}$-graded vertex operator coalgebra, if there is another linear map $\rho:V\rightarrow \mathbb{C}$, and write $(\rho\otimes \Id_V)\circ\Yup (z)=\sum_{k\in\mathbb{Z}}L(k)z^{k-2}$, then $L(k)\in\End(V)$ and the following Virasoro identity
    \begin{eqnarray}\label{VI3.5}
     [L(k),L(j)]=(k-j)L(k+j)+\frac{k^3-k}{12}\delta_{k+j,0}d
    \end{eqnarray}
  holds on $V$. $d$ is called the rank of $V$. Furthermore, $V_s$ is an eigenspace of $L(0)$ with eigenvalue $s$ and the following identity
    \begin{eqnarray}\label{L(1D)3.6}
     (L(1)\otimes\Id_V)\circ \Yup (z)=\frac{d}{dz}\Yup (z)
    \end{eqnarray}
  holds on $V$. For any $v\in V_s$, we say $v$ is homogeneous of weight $s$, denoted by $s=\wt v.$

  If there is no confusion, we may say $V$ is a graded vertex operator coalgebra for simplicity.
\end{Definition}

\begin{Remark}\label{Rmk3.2}
  (i) Suppose $v\in V_t$, from definition, we know $\Delta_k(v)\in\oplus_{s\in\mathbb{Z}}V_{t+k+1-s}\otimes V_s$. Hence we can write $$\Delta_k(v)=\sum\sum_{s\in\mathbb{Z}}v'_{t+k+1-s}\otimes v''_{s}$$with $v_i',v_i''\in V_i$, where the second tensor factors are linearly independent. In this way, we have $$\Yup (z)v=\sum_{k\in\mathbb{Z}}\sum\sum_{s\in\mathbb{Z}}v'_{t+k+1-s}\otimes v''_{s}z^{-k-1}.$$

  (ii) From identity (\ref{LU3.1}), we have $$c(v'_{t+k+1-s})=0$$for $k\neq-1,$ and $$v=\sum\sum_{s\in\mathbb{Z}}c(v'_{t-s})\otimes v''_{s}.$$ By choosing one of $v_s''$ to be $v$, we get $s=t$, and $c(v_{t-s}')=0$ if $v_s''\neq v$, and $c(v_{t-s}')=1$ if $v_s''= v.$

  (iii) For a $\mathbb{Z}$-graded vertex operator coalgebra, we know $L(k)$ is a homogeneous linear map with degree $-k.$
\end{Remark}

\begin{Proposition}\cite{H2}
In the definition of vertex coalgebras, Jacobi identity (\ref{JI3.4}) is equivalent to following two conditions:

(i) (Weak cocommutativity) For $v\in V$, there exists $q\in \mathbb{N}$ such that $$(z_1-z_2)^q(\Id_V\otimes \Yup (z_2))\circ \Yup (z_1)v=(z_1-z_2)^q(\tau\otimes\Id_V)\circ(\Id_V\otimes \Yup (z_1))\circ \Yup (z_2)v.$$

(ii) (Weak coassociativity) For $v\in V$, there exists $q\in \mathbb{N}$ such that $$(z_0+z_2)^q(\Yup (z_0)\otimes \Id_V)\circ \Yup (z_2)v=(z_0+z_2)^q(\Id_V\otimes \Yup (z_2))\circ \Yup (z_0+z_2)v.$$
\end{Proposition}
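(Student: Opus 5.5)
The plan is to dualize the classical argument that, for vertex algebras, the Jacobi identity is equivalent to weak commutativity together with weak associativity (cf.\ \cite{LL}). Every identity here is an identity of linear maps from $V$ into $V\otimes V\otimes V$-valued formal series. So I will fix $v\in V$ and work throughout with the three series
\begin{eqnarray*}
&&A(z_1,z_2)=(\Id_V\otimes \Yup (z_2))\circ \Yup (z_1)v,\\
&&B(z_1,z_2)=(\tau\otimes\Id_V)\circ(\Id_V\otimes \Yup (z_1))\circ \Yup (z_2)v,\\
&&C(z_0,z_2)=(\Yup (z_0)\otimes \Id_V)\circ \Yup (z_2)v .
\end{eqnarray*}
The first step is to record their truncation properties from condition (i) of Definition~\ref{Def2.17}. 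Since $\Delta_k(v)$ is a finite sum of tensors and vanishes for $k\ll0$, and the same holds for each tensor factor that appears, each of $A,B,C$ is truncated in the appropriate variable. For instance, in $C$ only finitely many tensors occur at each fixed power of $z_2$, and the powers of $z_0$ are truncated on one side. These properties are exactly what make products with $(z_1-z_2)^q$, $(z_0+z_2)^q$ and the delta functions well defined, as in the algebra case.

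For Jacobi $\Rightarrow$ (i), I will multiply (\ref{JI3.4}) by $z_0^{q}$ and take $\Res_{z_0}$. The right-hand side becomes $\Res_{z_0}z_0^q z_1^{-1}\delta(\frac{z_2+z_0}{z_1})C(z_0,z_2)$. By the truncation of $C$ in $z_0$, this vanishes for $q$ large; $q$ depends on $v$ only because of the finiteness of $\Delta_k(v)$. On the left-hand side, the standard delta-function identity $\Res_{z_0}z_0^q z_0^{-1}\delta(\frac{z_1-z_2}{z_0})=(z_1-z_2)^q$ produces $(z_1-z_2)^qA-(z_1-z_2)^qB=0$, which is weak cocommutativity.

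For Jacobi $\Rightarrow$ (ii), I will take $\Res_{z_1}$ of (\ref{JI3.4}) multiplied by $z_1^{q}$. The $B$-term drops out for $q$ large by truncation of $B$ in $z_1$. The remaining terms give $(z_0+z_2)^qC(z_0,z_2)$ on one side. On the other side they give $(z_2+z_0)^q$ times $A(z_1,z_2)$ with $z_1$ replaced by $z_0+z_2$; this is the same as $(\Id_V\otimes\Yup(z_2))\circ\Yup(z_0+z_2)v$ after reordering the expansions, which is legitimate because the factor $(z_0+z_2)^q$ has cleared the singular part.

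For the converse, I will follow the proof of the vertex algebra case verbatim, with the module-valued series replaced by the $V^{\otimes3}$-valued ones above. First, weak cocommutativity says $(z_1-z_2)^qA=(z_1-z_2)^qB$, and weak coassociativity says $(z_0+z_2)^qC$ equals $(z_0+z_2)^q$ times $A$ evaluated at $z_1=z_0+z_2$. Choosing $q$ large enough for both, I will show that
\begin{eqnarray*}
z_0^{-1}\delta(\frac{z_1-z_2}{z_0})A-z_0^{-1}\delta(\frac{z_2-z_1}{-z_0})B
\end{eqnarray*}
is annihilated by the appropriate powers and equals $z_1^{-1}\delta(\frac{z_2+z_0}{z_1})C$. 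This uses the standard lemma that a common ``rational function'' $(z_1-z_2)^{-q}F$ has two expansions whose difference is a delta-function expression, together with the substitution property $z_1^{-1}\delta(\frac{z_2+z_0}{z_1})F(z_1)=z_1^{-1}\delta(\frac{z_2+z_0}{z_1})F(z_2+z_0)$.

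The main obstacle I expect is the bookkeeping of truncation directions. Because the grading is reversed relative to a vertex operator algebra (Remark~\ref{Rmk3.2}), I must check carefully, at each multiplication and each residue, that the products of formal series exist and that the relevant terms really vanish for large $q$. If this becomes unwieldy, the fallback is to pair everything with $V'^{\otimes 3}$, i.e.\ test against homogeneous functionals. Since $V$ is graded with finite-dimensional pieces, by \cite{H1} these identities become exactly the Jacobi, commutativity and associativity identities for the vertex operator algebra $V'$. The equivalence would then follow from the known algebra result, provided I verify that the uniform choice of $q$ for a fixed $v$ transfers under the pairing.
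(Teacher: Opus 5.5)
The paper gives no proof of this statement (it is quoted from \cite{H2}), so your argument has to stand on its own. Its two forward steps fail because the truncation runs the opposite way from the algebra case. In a vertex coalgebra one only has $\Delta_k(v)=0$ for $k\ll0$. So $\Yup(z)v$ has finitely many \emph{nonnegative} powers of $z$ and, in general, infinitely many negative ones, and multiplying by $z_0^q$ does not remove a $z_0^{-1}$-coefficient. Indeed $\Res_{z_0}z_0^q\,C(z_0,z_2)=(\Delta_q\otimes\Id_V)\circ\Yup(z_2)v$, and applying $\Res_{z_0}z_0^q$ to the right-hand side of (\ref{JI3.4}) gives
\[
\sum_{i\geq0}\Big(\frac{1}{i!}\frac{\partial^{i}}{\partial z_2^{i}}z_1^{-1}\delta\big(\frac{z_2}{z_1}\big)\Big)(\Delta_{q+i}\otimes\Id_V)\circ\Yup(z_2)v,
\]
which has no reason to vanish for large $q$. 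Your remark that $q$ depends only on $v$ ``because of the finiteness of $\Delta_k(v)$'' conflates finiteness of each $\Delta_k(v)$ with vanishing for all large $k$. The same objection applies to dropping the $B$-term in your proof of (ii). After $\Res_{z_1}z_1^q$ that term involves $(\Id_V\otimes\Delta_{q+i})\circ\Yup(z_2)v$ for all $i\geq0$, and its $z_0^{-1}$-coefficient is exactly $(\tau\otimes\Id_V)\circ(\Id_V\otimes\Delta_q)\circ\Yup(z_2)v$.

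More careful bookkeeping cannot repair this, because the forward implication, with $q$ depending only on $v$, is false as formulated here. Take $\Res_{z_1}z_1^p$ ($p=0,1,\dots$) of the display, and note that either condition persists when $q$ is increased. Granted (\ref{JI3.4}), it follows that weak cocommutativity with exponent $q$ forces $(\Delta_j\otimes\Id_V)\circ\Yup(z_2)v=0$ for all $j\geq q$. Likewise weak coassociativity forces $(\Id_V\otimes\Delta_j)\circ\Yup(z_2)v=0$ for all $j\geq q$.

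Now let $\mathcal V$ be the rank-one Heisenberg vertex operator algebra, with $[h(m),h(k)]=m\delta_{m+k,0}$. Then $V=\mathcal V'$ is a graded vertex operator coalgebra by Proposition \ref{VC-VA}(i), so the Jacobi identity holds; take $v=\1^*$. For $u=h(-n)\1$ one computes $u_{2n-1}u=(-1)^{n-1}n\binom{2n-1}{n-1}\1\neq0$. Pairing with $u\otimes u\otimes\1$ and with $\1\otimes u\otimes u$ then gives $(\Delta_{2n-1}\otimes\Id_V)\circ\Delta_{-1}(\1^*)\neq0$ and $(\Id_V\otimes\Delta_{2n-1})\circ\Delta_{-1}(\1^*)\neq0$ for every $n\geq1$. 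So no $q$ works for either condition.

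What is true, and what your fallback of pairing with $(V')^{\otimes3}$ actually yields, is a weaker statement. There $q$ may also depend on the weights of the tensor factors being tested: slots one and two for (i), slots one and three for (ii). The uniformity you said you would have to verify is exactly what fails. For the same reason the converse cannot be copied verbatim from the algebra case. That proof uses lower-truncation facts such as $z^{l}Y(u,z)w$ having no negative powers of $z$ for $l\gg0$, and these have no valid coalgebra counterpart here.
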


\begin{Lemma}\label{L2.1}\cite{H2}
  Let $(V,\Yup (z),c)$ be a graded vertex coalgebra. Then $$\tau\circ\Yup(z)=\Yup(-z)\circ\exp^{zD^*},$$ where $D^*=\Res_zz^{-2}(\Id_V\otimes c)\circ\Yup(z)$. It is obvious that $D^*$ is homogeneous of degree $-1$. This is called skew-symmetry.

  Furthermore, if $(V,\Yup (z),c,\rho)$ is a graded vertex operator coalgebra, we have$$\tau\circ\Yup(z)=\Yup(-z)\circ\exp^{zL(1)}.$$
\end{Lemma}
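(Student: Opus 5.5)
Since $\Yup(z)$ is defined on all of $V$, the identity $\tau\circ\Yup(z)=\Yup(-z)\circ\exp^{zD^*}$ can be checked by pairing with the restricted dual $V'=\oplus_s V_s^*$. Each $V_s$ is finite dimensional, so a vector $w\in V\otimes V$ is zero exactly when $(a\otimes b,w)=0$ for all homogeneous $a,b\in V'$. I would dualize the vertex coalgebra axioms: for $a,b\in V'$ set
$$(Y'(a,z)b,v)=(a\otimes b,\Yup(z)v),$$
so that $Y'(a,z)b$ lies in $V'[[z,z^{-1}]]$, with only finitely many components in each weight by the grading condition. The Jacobi identity (\ref{JI3.4}) becomes the usual Jacobi identity for $Y'$. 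Axiom (\ref{LU3.1}) says $Y'(c,z)=\Id$, and (\ref{CI3.2}), (\ref{CI3.3}) say $Y'(a,z)c\in V'[[z]]$ with constant term $a$. So $(V',Y',c)$ is a vertex algebra with vacuum $c$, and its translation operator $D'a=a_{-2}c$ is the transpose of $D^*=\Res_z z^{-2}(\Id_V\otimes c)\circ\Yup(z)$. (This is Hubbard's duality; I would either cite it or verify the three dualizations directly, since they are routine.)

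Next I would invoke skew-symmetry for vertex algebras, $Y'(b,z)a=e^{zD'}Y'(a,-z)b$, which follows from the Jacobi identity together with the vacuum properties. Pairing both sides with $v$ gives, on the left, $(b\otimes a,\Yup(z)v)=(a\otimes b,\tau\circ\Yup(z)v)$. On the right, moving $e^{zD'}$ across the pairing turns it into $e^{zD^*}$ acting on $v$, so the right side equals $(a\otimes b,\Yup(-z)e^{zD^*}v)$. Since $a,b$ are arbitrary, the first identity follows. $D^*$ is homogeneous of degree $-1$ because $\Delta_{1}$ has degree $2$ and $c$ is supported on $V_0$, the latter by Remark \ref{Rmk3.2}(ii).

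For the operator version it remains to show $D^*=L(1)$. I would apply $(\Id_V\otimes c)$ to (\ref{L(1D)3.6}), which gives $L(1)\,(\Id\otimes c)\Yup(z)v=\frac{d}{dz}(\Id\otimes c)\Yup(z)v$. The series $(\Id\otimes c)\Yup(z)v$ lies in $V[[z]]$ by (\ref{CI3.2}) and has constant term $v$ by (\ref{CI3.3}). Comparing constant terms gives $L(1)v$ equal to the $z^1$ coefficient, which is $D^*v$. Here I must make sure that $\Id\otimes c$ commutes with $L(1)\otimes\Id$ as written, i.e. that $L(1)$ acts on the first tensor factor, the one that survives $\Id\otimes c$. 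That holds.

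The main obstacle is the sign and ordering bookkeeping when dualizing the Jacobi identity, in particular matching the $\tau$ term to the second term of the algebra Jacobi identity. If that becomes messy, the fallback is a direct coalgebra argument: apply $(\Id\otimes\Id\otimes c)$ to the weak cocommutativity identity and then to the weak coassociativity identity, and mimic the standard proof of skew-symmetry.
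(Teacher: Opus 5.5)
The paper does not prove this lemma; it quotes it from Hubbard \cite{H2}. Your dualization argument is a correct, self-contained proof, and the step you flag as the main obstacle causes no trouble. Pair $a\otimes b\otimes w$ (with $a,b,w\in V'$) against the three terms of (\ref{JI3.4}). The results are $(Y'(a,z_1)Y'(b,z_2)w,v)$, $(Y'(b,z_2)Y'(a,z_1)w,v)$ and $(Y'(Y'(a,z_0)b,z_2)w,v)$. So the $\tau$-term lands exactly on the second term of the usual Jacobi identity, with the same sign and delta function.

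Two details should be stated explicitly:
\begin{itemize}
\item \emph{Truncation.} You need $a_kb=0$ for $k\gg0$. If $a\in V_m^*$ and $b\in V_n^*$, then $a_kb$ is supported on $V_{m+n-k-1}$, which vanishes for $k\gg0$ because $V_s=0$ for $s\ll0$.
\item \emph{Finiteness.} The same lower bound, together with $D^*$ having degree $-1$, makes $e^{zD^*}v$ a polynomial in $z$. Hence every pairing you transpose is a finite sum.
\end{itemize}
Your identification $D^*=L(1)$ is right: apply $\Id_V\otimes c$ to (\ref{L(1D)3.6}) and compare constant terms.

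Two corrections.
\begin{itemize}
\item \emph{The degree of $D^*$.} Since $D^*=(\Id_V\otimes c)\circ\Delta_{-2}$, it has degree $-1$ because $\Delta_{-2}$ has degree $-1$. Your phrase ``$\Delta_1$ has degree $2$'' is a slip; that would give degree $2$.
\item \emph{Why $c\in V'$.} You need $c(V_n)=0$ for $n\neq0$, and Remark \ref{Rmk3.2}(ii) is not a clean justification. A direct argument: for $w\in V_n$, let $X_0$ be the $V_n\otimes V_0$-component of $\Delta_{-1}(w)$. Then (\ref{CI3.3}) gives $(\Id_V\otimes c)X_0=w$, and (\ref{LU3.1}) gives $(c\otimes\Id_V)X_0=\delta_{n,0}w$. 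Hence $c(w)=(c\otimes c)X_0=\delta_{n,0}c(w)$.
\end{itemize}
Also, Proposition \ref{VC-VA}(ii) is stated only for graded vertex operator coalgebras. For the first half of the lemma you therefore need your direct dualization, not that citation.

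Compared with a proof carried out at the coalgebra level (your fallback), your route is shorter because it imports vertex-algebra skew-symmetry wholesale. The price is that it relies on the grading: finite-dimensional, bounded-below homogeneous pieces are what make $V'$ separate points. Since the lemma is stated for graded vertex coalgebras, nothing is lost here.
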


\begin{Definition}\cite{H,H2}
   Let $(V,\Yup (z),c,\rho)$ be a graded vertex operator coalgebra. Let $\mathcal {M}$ be a vector space and $\Yup_\mathcal {M}(z):\mathcal {M}\rightarrow V\otimes \mathcal {M}[[z,z^{-1}]]$ a linear map. $(\mathcal {M},\Yup_\mathcal {M}(z))$ is called a weak $V$-comodule, if

  (i) For any $m\in \mathcal {M}$, write $\Yup_\mathcal {M}(z)=\sum_{k\in\mathbb{Z}}\Delta_{\mathcal {M},k}z^{-k-1}$, we have $\Delta_{\mathcal {M},k}(m)$ is a finite sum and $\Delta_{\mathcal {M},k}(m)=0$ for $k<<0.$

  (ii) For any $m\in \mathcal {M}$, we have
    \begin{eqnarray}\label{LUM3.1}
      (c\otimes \Id_\mathcal {M})\circ\Yup_\mathcal {M} (z)m=m.
    \end{eqnarray}

  (iii) The following identity
    \begin{eqnarray}\label{JIM3.4}
      &&z_0^{-1}\delta(\frac{z_1-z_2}{z_0})(\Id_V\otimes \Yup_\mathcal {M} (z_2))\circ \Yup_\mathcal {M} (z_1)\nonumber\\
      &&~~~~~~~~~~-z_0^{-1}\delta(\frac{z_2-z_1}{-z_0})(\tau\otimes\Id_\mathcal {M})\circ(\Id_V\otimes \Yup_\mathcal {M} (z_1))\circ \Yup_\mathcal {M} (z_2)\nonumber\\
      &&=z_1^{-1}\delta(\frac{z_2+z_0}{z_1})(\Yup (z_0)\otimes \Id_\mathcal {M})\circ \Yup_\mathcal {M} (z_2)
    \end{eqnarray}
  holds on $\mathcal {M}$. This identity is also called Jacobi identity.

  (iv) Write $(\rho\otimes \Id_\mathcal {M})\circ\Yup_\mathcal {M} (z)=\sum_{k\in\mathbb{Z}}L(k)z^{k-2}$, then $L(k)\in\End(\mathcal {M})$ and the following Virasoro identity
    \begin{eqnarray}\label{VIM3.5}
     [L(k),L(j)]=(k-j)L(k+j)+\frac{k^3-k}{12}\delta_{k+j,0}d
    \end{eqnarray}
  holds on $\mathcal {M}$.

  (v) The following identities
    \begin{eqnarray}\label{L(1D)M3.6}
     (L(1)\otimes\Id_\mathcal {M})\circ \Yup_\mathcal {M} (z)=\frac{d}{dz}\Yup_\mathcal {M} (z)=\Yup_\mathcal {M}(z)\circ L(1)-(\Id_V\otimes L(1))\circ\Yup_\mathcal {M}(z)
    \end{eqnarray}
  holds on $\mathcal {M}$.

  $(\mathcal {M},\Yup_\mathcal {M}(z))$ is called an admissible $V$-comodule, if there is a $\mathbb{N}$-grading on $\mathcal {M}$, such that $\mathcal {M}=\oplus_{t\in\mathbb{N}}M_t$, and $\Delta_k$ is a homogeneous linear map of degree $k+1$, i.e., for any $m\in M_t$, we have $$\Delta_{\mathcal {M},k}(m)\in (V\otimes \mathcal {M})_{t+k+1}=\oplus_{s\in\mathbb{N}}V_{t+k+1-s}\otimes M_s.$$

  Let $(\mathcal {M},\Yup_\mathcal {M}(z))$ be an admissible $V$-comodule, $\mathcal {M}^1$ a subspace of $\mathcal {M}$. If $(\mathcal {M}^1,\Yup_\mathcal {M}|_{\mathcal {M}^1}(z))$ is also an admissible $V$-comodule, we say $\mathcal {M}^1$ is an admissible sub comodule.

  Let $(\mathcal {M},\Yup_\mathcal {M}(z))$ be an admissible $V$-comodule. $\mathcal {M}$ is called simple if there is no nontrivial admissible sub comodule. $\mathcal {M}$ is called cosemisimple if $\mathcal {M}$ is a direct sum of simple admissible sub comodules.

  Let $(\mathcal {M}^1,\Yup_{\mathcal {M}^1}(z))$ and $(\mathcal {M}^2,\Yup_{\mathcal {M}^2}(z))$ be two admissible $V$-comodules. A linear map $\psi:\mathcal {M}^1\rightarrow \mathcal {M}^2$ is called a $V$-comodule homomorphism if $$(\Id\otimes \psi)\circ \Yup_{\mathcal {M}^1}(z)=\Yup_{\mathcal {M}^2}(z)\circ \psi.$$

  Similarly, we have the definitions of monomorphism, epimorphism and isomorphism, etc.

  A graded vertex operator coalgebra $V$ is called corational if every admissible $V$-comodule is cosemisimple.
\end{Definition}

\begin{Remark}\label{Rmk3.22}
  Let $(V,\Yup (z),c,\rho)$ be a graded vertex operator coalgebra.

  (i) Suppose $\mathcal {M}$ is an admissible $V$-comodule and $m\in M_t$, from definition, we know $\Delta_{\mathcal {M},k}(m)\in\oplus_{s\in\mathbb{N}}V_{t+k+1-s}\otimes M_s$. Hence we can write $$\Delta_{\mathcal {M},k}(m)=\sum\sum_{s\in\mathbb{N}}m'_{t+k+1-s}\otimes m''_{s}$$with $m_i'\in V_i,m''_i\in M_i$, where the second tensor factors are linearly independent. In this way, we have $$\Yup_\mathcal {M} (z)m=\sum_{k\in\mathbb{Z}}\sum\sum_{s\in\mathbb{N}}m'_{t+k+1-s}\otimes m''_{s}z^{-k-1}.$$Hence, identity (\ref{LUM3.1}) implies $$c(m'_{t+k+1-s})=0$$for $k\neq-1,$  $s=n$, and $c(m_{t-s}')=0$ if $m_s''\neq m$, and $c(m_{t-s}')=1$ if $m_s''= m.$

  (ii) $L(k)$ is a homogeneous linear map with degree $-k.$

  (iii) Since $V$ is lower truncated, we can shift the grading of $V$ such that $V$ is $\mathbb{N}$-graded, this makes $V$ to be an admissible $V$-comodule.
\end{Remark}

\begin{Proposition}\cite{H2}\label{WCM}
Let $V$ be a vertex coalgebra. In the definition of weak $V$-comodule, Jocabi identity (\ref{JIM3.4}) is equivalent to following two conditions:

(i) (Weak cocommutativity) For $m\in \mathcal {M}$, there exists $q\in \mathbb{N}$ such that $$(z_1-z_2)^q(\Id_V\otimes \Yup_\mathcal {M} (z_2))\circ \Yup_\mathcal {M} (z_1)m=(z_1-z_2)^q(\tau\otimes\Id_\mathcal {M})\circ(\Id_V\otimes \Yup_\mathcal {M} (z_1))\circ \Yup_\mathcal {M} (z_2)m.$$

(ii) (Weak coassociativity) For $m\in \mathcal {M}$, there exists $q\in \mathbb{N}$ such that $$(z_0+z_2)^q(\Yup (z_0)\otimes \Id_\mathcal {M})\circ \Yup_\mathcal {M} (z_2)m=(z_0+z_2)^q(\Id_V\otimes \Yup_\mathcal {M} (z_2))\circ \Yup_\mathcal {M} (z_0+z_2)m.$$
\end{Proposition}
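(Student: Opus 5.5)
The plan is to run the proof of the corresponding statement for weak modules over a vertex algebra (Frenkel--Lepowsky--Lepowsky, Li; see \cite{LL}) with all arrows reversed. Throughout, I pair the identities componentwise with vectors of $V$ and $\mathcal{M}$. All three operator expressions in (\ref{JIM3.4}) take values in $V\otimes V\otimes\mathcal{M}$ (after the flip $\tau$ in the middle term), so coefficient comparisons make sense. For a fixed $m\in\mathcal{M}$, by condition (i) only finitely many $\Delta_{\mathcal{M},k}(m)$ with $k$ below a given bound are nonzero, and each of them is a finite sum. So for each fixed power of $z_2$ (respectively $z_1$) only finitely many elements of $V$ are involved. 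This lets me use the truncation of $\Yup$ from Definition \ref{Def2.17}(i) uniformly on finitely many vectors at a time. That uniformity is the only analytic input I need.

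\emph{Jacobi $\Rightarrow$ (i),(ii).} For weak cocommutativity I multiply (\ref{JIM3.4}) by $z_0^{q}$ and take $\Res_{z_0}$. The left side becomes $(z_1-z_2)^q$ times the difference of the two iterated maps. On the right side, $z_0^q(\Yup(z_0)\otimes\Id_\mathcal{M})\circ\Yup_\mathcal{M}(z_2)m$ should have no negative powers of $z_0$ for $q$ large, so its residue vanishes. The point to check is that this $q$ can be chosen independently of the infinitely many $z_2$-coefficients of $\Yup_\mathcal{M}(z_2)m$. If the coalgebra truncation goes the wrong way for this, I will instead extract (i) by taking $\Res_{z_1}$ of (\ref{JIM3.4}) after multiplying by a suitable power. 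For weak coassociativity I take $\Res_{z_1}$ of (\ref{JIM3.4}) multiplied by $z_1^{q}$, writing $z_1^q=(z_2+z_0)^q$ on the right via the delta-function substitution. The term carrying $(\tau\otimes\Id_\mathcal{M})$ should drop out because, for $q$ large (depending on $m$ through condition (i) for $\Yup_\mathcal{M}$), its $z_1$-expansion has no $z_1^{-1}$ term. What remains is exactly
\[
(z_0+z_2)^q(\Yup(z_0)\otimes\Id_\mathcal{M})\circ\Yup_\mathcal{M}(z_2)m=(z_0+z_2)^q(\Id_V\otimes\Yup_\mathcal{M}(z_2))\circ\Yup_\mathcal{M}(z_0+z_2)m.
\]

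\emph{(i),(ii) $\Rightarrow$ Jacobi.} Here I follow the standard argument of \cite{LL}. Write $A(z_1,z_2)$ for the common value of $(z_1-z_2)^q$ times the two sides of (i), using a $q$ large enough for both (i) and (ii) at $m$. Then
\[
z_0^{-1}\delta\Big(\frac{z_1-z_2}{z_0}\Big)(\Id_V\otimes\Yup_\mathcal{M}(z_2))\circ\Yup_\mathcal{M}(z_1)m-z_0^{-1}\delta\Big(\frac{z_2-z_1}{-z_0}\Big)(\tau\otimes\Id_\mathcal{M})\circ(\Id_V\otimes\Yup_\mathcal{M}(z_1))\circ\Yup_\mathcal{M}(z_2)m
\]
equals $z_0^{-q}A(z_1,z_2)$ times the difference of the two delta functions. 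The standard delta identity turns this into $z_1^{-1}\delta(\frac{z_2+z_0}{z_1})z_0^{-q}A(z_2+z_0,z_2)$, which is legitimate because $A$ is suitably truncated in $z_1$. Finally, (ii) identifies $z_0^{-q}A(z_2+z_0,z_2)$, after cancelling $(z_0+z_2)^q$ against $z_1^q$ inside the delta function, with $(\Yup(z_0)\otimes\Id_\mathcal{M})\circ\Yup_\mathcal{M}(z_2)m$. This yields the right side of (\ref{JIM3.4}).

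The main obstacle is bookkeeping of truncation directions. In the coalgebra setting, $\Delta_k=0$ for $k\ll0$ bounds the powers $z^{-k-1}$ from \emph{above}, opposite to the vertex algebra case. So at each step I must verify that the relevant products, e.g. $z_0^{-q}A(z_2+z_0,z_2)$ and the binomial expansions of $(z_0+z_2)^q$ and $(z_1-z_2)^q$, are well defined formal series. I must also verify that the residue-vanishing arguments use the correct truncated variable; if $z_0^q$ does not kill the negative $z_0$-powers, the power should instead be $z_0^{-q}$ or $z_1^{-q}$. I would settle this first by checking where each factor lives, namely $V\otimes V\otimes\mathcal{M}[[z_0^{\pm1},z_1^{\pm1},z_2^{\pm1}]]$, and which exponent is bounded.
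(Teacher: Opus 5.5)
The paper does not prove this proposition; it only quotes it from \cite{H2}. So your argument has to stand on its own, and the forward direction does not.

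\textbf{Jacobi $\Rightarrow$ (i),(ii).} The obstruction is the one you flag yourself. In a vertex coalgebra the condition $\Delta_k=0$ for $k\ll 0$ bounds powers of $z$ from \emph{above}, and only for finitely many vectors at a time.
\begin{itemize}
\item In each $z_2$-coefficient of $(\Yup(z_0)\otimes\Id_{\mathcal M})\circ\Yup_{\mathcal M}(z_2)m$, the powers of $z_0$ are unbounded below. So $\Res_{z_0}z_0^q$ does not kill the right-hand side.
\item The truncation in the definition of a weak comodule controls only the outer variable $z_2$. In the $\tau$-term, the inner variable $z_1$ has powers bounded above but unbounded below, while $z_0^{-1}\delta(\frac{z_2-z_1}{-z_0})$ supplies all nonnegative powers of $z_1$. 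Hence no factor $z_1^q$ removes the $z_1^{-1}$-coefficient.
\end{itemize}

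No other choice of residue can rescue this, because with $q$ depending only on $m$, (i) and (ii) fail in the basic example. Take $\mathcal V=M(1)$ the Heisenberg vertex operator algebra, $V=\mathcal V'$, $\mathcal M=V$ (Remark \ref{Rmk3.22}(iii)), $m=\mathbf 1^*$ and $u=h(-p)\mathbf 1$. Then $\langle\mathbf 1^*,Y(u,z)u\rangle=c_pz^{-2p}$ with $c_p=\langle\mathbf 1^*,u_{2p-1}u\rangle=(-1)^{p-1}p\binom{2p-1}{p}\neq 0$.
\begin{itemize}
\item Pairing both sides of (i) with $u\otimes u\otimes\mathbf 1$ gives $c_p(z_1-z_2)^q$ times the two different expansions of $(z_1-z_2)^{-2p}$.
\item Pairing both sides of (ii) with $u\otimes\mathbf 1\otimes u$ gives $c_p(z_0+z_2)^q(z_2+z_0)^{-2p}$ on one side and $c_p(z_0+z_2)^{q-2p}$ on the other.
\end{itemize}
Both pairs differ whenever $q<2p$, and $p$ is arbitrary. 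Yet the Jacobi identity holds, being dual to that of $\mathcal V$. So Jacobi $\Rightarrow$ (i),(ii) cannot be proved in the stated form. In the coalgebra setting, $q$ would have to be allowed to depend on the functionals one pairs with (the analogues of $u,v$, resp.\ $u,w$), not on $m$.

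\textbf{(i),(ii) $\Rightarrow$ Jacobi.} Your skeleton is sound up to $z_0^{-q}z_1^{-1}\delta(\frac{z_2+z_0}{z_1})A(z_1,z_2)$. These products are defined because $A$ has both $z_1$- and $z_2$-powers bounded above. The final identification, however, is not justified.
\begin{itemize}
\item $A(z_2+z_0,z_2)$ expands negative powers of $z_2+z_0$ in nonnegative powers of $z_0$.
\item (ii) involves $\Yup_{\mathcal M}(z_0+z_2)$, i.e.\ expansions in nonnegative powers of $z_2$.
\item In the algebra proof these agree because the lower truncation $z_1^{l}Y(u,z_1)w\in W[[z_1]]$ makes the relevant series polynomial in $z_1$. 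No factor $(z_0+z_2)^q$ with $q\geq0$ achieves this here.
\end{itemize}
So ``cancelling $(z_0+z_2)^q$ against $z_1^q$'' compares two different series.

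One way to close the gap is as follows. Set $G=z_1^qA$. By (ii), $G(z_0+z_2,z_2)$, expanded in nonnegative powers of $z_2$, equals $z_0^q(z_0+z_2)^q(\Yup(z_0)\otimes\Id_{\mathcal M})\circ\Yup_{\mathcal M}(z_2)m$, whose $z_2$-powers are bounded above. Each component of $G$ of fixed total degree is a Laurent polynomial. The boundedness therefore forces each homogeneous component of $G(z_0+z_2,z_2)$ to be a Laurent polynomial, so $G(z_0+z_2,z_2)=G(z_2+z_0,z_2)$. Finally, $(z_2+z_0)^{-q}$ inverts $(z_0+z_2)^q$ in $(V\otimes V\otimes\mathcal M)((z_0^{-1}))((z_2^{-1}))$, and the delta-function substitution can be carried out degree by degree.
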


\begin{Proposition}\cite{H,H1,H2}\label{VC-VA}
Let $(V,\Yup (z),c,\rho)$ be a graded vertex operator coalgebra and $(\mathcal {V},Y,\textbf{1},\omega)$ a vertex operator algebra. Let $(\mathcal {M},\Yup_\mathcal {M}(z))$ be an admissible $V$-comodule and $(\mathcal {N},Y_\mathcal {N})$ an admissible $\mathcal {V}$-module. Suppose $V_0\nsubseteq\ker c,V_2\nsubseteq\ker\rho.$ Then we have

(i) $(\mathcal {V}'=\oplus_{s\in\mathbb{Z}}\mathcal {V}_s^*,\Yup_{\mathcal {V}'}(z),c_{\mathcal {V}'},\rho_{\mathcal {V}'})$ is a graded vertex operator coalgebra with
    \begin{eqnarray*}
     &&c_{\mathcal {V}'}=\textbf{1}^*,\rho_{\mathcal {V}'}=\omega^*,\\
     &&(\Yup_{\mathcal {V}'}(z)f,u\otimes v)=(f,Y(u,z)v),
    \end{eqnarray*}
where $f\in \mathcal {V}',u,v\in\mathcal {V}$, $\textbf{1}^*,\omega^*$ are the dual elements of $\textbf{1},\omega.$

(ii) $(V'=\oplus_{s\in\mathbb{Z}}V_s',Y_{V'}(\cdot,z),\textbf{1}_{V'},\omega_{V'})$ is a vertex operator algebra with
    \begin{eqnarray*}
     &&\textbf{1}_{V'}=c|_{V_0},\omega_{V'}=\rho|_{V_2},\\
     &&(Y(f,z)g,v)=(f\otimes g,\Yup(z)v),
    \end{eqnarray*}
where $f,g\in V',v\in V.$

(iii) $\mathcal {N}'=\oplus_{s\in\mathbb{N}}N_s^*$ is an admissible $\mathcal {V}'$-comodule with $$(\Yup_{\mathcal {N}'}(z)n^*,v\otimes n)=(n^*,Y_\mathcal {N}(v,z)n),$$where $n^*\in \mathcal {N}',v\in\mathcal {V},n\in \mathcal {N}.$

(iv) $\mathcal {M}'=\oplus_{s\in\mathbb{N}}M_s^*$ is an admissible $V'$-module with $$(Y_{\mathcal {M}'}(f,z)m^*,m)=(f\otimes m^*,\Yup_\mathcal {M}(z)m),$$where $f\in V',m^*\in \mathcal {M}',m\in \mathcal {M}.$
\end{Proposition}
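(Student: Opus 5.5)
The plan is to obtain all four parts by transposing structure maps through the graded (restricted) duals. Everything is graded with finite-dimensional pieces ($\dim \mathcal {V}_s<\infty$, $\dim V_s<\infty$) or graded by $\mathbb{N}$ with the pairing taken degreewise. So for homogeneous $f$ and fixed $k$, the functional $u\otimes v\mapsto (f,u_kv)$ is supported on finitely many pairs of weights, namely $\wt u+\wt v=\wt f+k+1$. It therefore defines an element of $\oplus_{s}\mathcal {V}'_{\wt f+k+1-s}\otimes \mathcal {V}'_s$.

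For (i), I would first use the lower truncation of $\mathcal {V}$ to check that this sum over $s$ is finite and that $\Delta_k(f)=0$ for $k\ll0$. This is the truncation condition (i) of Definition \ref{Def2.17}, and it also gives the degree statement $\Delta_k(f)\in(\mathcal {V}'\otimes\mathcal {V}')_{\wt f+k+1}$. Next I would transpose each vertex operator algebra axiom:
\begin{itemize}
\item $Y(\textbf{1},z)=\Id$ gives the left counit identity (\ref{LU3.1}).
\item The creation property $\lim_{z\to0}Y(v,z)\textbf{1}=v$ gives (\ref{CI3.2}) and (\ref{CI3.3}).
\item The Jacobi identity gives (\ref{JI3.4}). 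This works because the delta-function coefficients are scalars, so pairing both sides with $u\otimes v\otimes w$ is termwise. The only thing to track is that the transpose of $Y(v,z_2)Y(u,z_1)$ paired against $u\otimes v\otimes w$ is exactly $(\tau\otimes\Id)\circ(\Id\otimes\Yup(z_1))\circ\Yup(z_2)$.
\item For the Virasoro data, $(\rho_{\mathcal {V}'}\otimes\Id)\Yup(z)f$ paired with $v$ equals $(f,Y(\omega,z)v)$. Hence the comodule operator $L(k)$ is the transpose of $L(-k)$. Using $[A^*,B^*]=-[A,B]^*$, one checks that $(k-j)L(k+j)$ and the central term $\frac{k^3-k}{12}\delta_{k+j,0}d$ come out with the right signs, and that $L(0)$ acts by $s$ on $\mathcal {V}_s^*$.
\item $Y(L(-1)v,z)=\frac{d}{dz}Y(v,z)$ transposes to (\ref{L(1D)3.6}).
\end{itemize}

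Part (ii) is the reverse transposition. I define $f_kg\in V'$ by $(f_kg,v)=(f\otimes g,\Delta_k(v))$. By homogeneity, $f_kg$ lies in $V'_{\wt f+\wt g-k-1}$, so it is indeed in the restricted dual. Since $V_s=0$ for $s\ll0$, we get $f_kg=0$ for $k\gg0$. The vacuum and creation axioms come from (\ref{LU3.1})–(\ref{CI3.3}), and the Jacobi identity from (\ref{JI3.4}). The hypotheses $V_0\nsubseteq\ker c$ and $V_2\nsubseteq\ker\rho$ are used precisely to guarantee that $\textbf{1}_{V'}=c|_{V_0}$ and $\omega_{V'}=\rho|_{V_2}$ are nonzero elements of the right weights. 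The Virasoro relations and the $L(-1)$-derivative property follow from (\ref{VI3.5}) and (\ref{L(1D)3.6}) by the same sign computation as in (i).

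Parts (iii) and (iv) are the module and comodule analogues, and they are where I expect the main obstacle. In (iii) the map $\Yup_{\mathcal {N}'}(z)$ must land in $\mathcal {V}'\otimes\mathcal {N}'$ with finite sums. This requires using the $\mathbb{N}$-grading of $\mathcal {N}$ together with the lower truncation of $\mathcal {V}$: the weight constraint $\wt v+s=t+k+1$ with $s\ge0$ and $\wt v$ bounded below leaves only finitely many pairs. However, individual $N_s$ need not be finite-dimensional. So I would restrict to the graded dual and argue degreewise. Concretely, $(\Yup_{\mathcal {N}'}(z)n^*,\cdot)$ restricted to $\mathcal {V}_a\otimes N_s$ is a functional on a space with $\dim\mathcal {V}_a<\infty$, so it is an element of $\mathcal {V}_a^*\otimes N_s^*$ (this uses only that one factor is finite-dimensional).

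The other delicate point is axiom (v), the second equality $\frac{d}{dz}\Yup_\mathcal {M}(z)=\Yup_\mathcal {M}(z)L(1)-(\Id\otimes L(1))\Yup_\mathcal {M}(z)$. It should come from transposing the module identity $[L(-1),Y_\mathcal {N}(v,z)]=\frac{d}{dz}Y_\mathcal {N}(v,z)$, which is a consequence of the Jacobi identity. I would derive this identity first and then transpose it. For (iv), the same degreewise argument runs with the roles reversed: it gives $Y_{\mathcal {M}'}(f,z)m^*\in\mathcal {M}'((z))$ and the admissible grading $f_kM_s^*\subseteq M^*_{\wt f+s-k-1}$.
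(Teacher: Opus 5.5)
Your proposal is correct, and it is essentially the argument behind this proposition: the paper gives no proof of its own and cites Hubbard, whose duality results are obtained exactly as you describe, by transposing each axiom through the graded (restricted) dual degreewise, including your use of $\dim\mathcal{V}_a<\infty$ to handle possibly infinite-dimensional $N_s$ and $M_s$. One detail in (ii) should be made explicit: (\ref{LU3.1})--(\ref{CI3.3}) and the Virasoro data are stated for $c$ and $\rho$, not for $c|_{V_0}$ and $\rho|_{V_2}$, so passing to the restrictions requires a comparison of graded components (for instance, $(c\otimes\Id)$ maps the $V_{t+k+1-s}\otimes V_s$ component of $\Delta_k(v)$ into $V_s$, so each such piece must vanish separately except the $V_0\otimes V_t$ piece for $k=-1$), and it is this degree argument, not the nonvanishing hypotheses, that makes $\textbf{1}_{V'}$ and $\omega_{V'}$ work.
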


\section{Coassociative coalgebra $C(V)$}
Now, we define the coassociative coalgebra $C(V)$ related to a graded vertex operator coalgebra $V$. First, for a linear map $\phi:V\rightarrow V$, let $z^\phi:V\rightarrow V\{z\}$ be a linear map which is defined as $$z^\phi(v)=z^{\lambda}(v),$$ where $v$ is an eigenvector of $\phi$ with eigenvalue $\lambda$, and $V\{z\}=\{\sum_{k\in\mathbb{C}}v_kz^k|v_k\in V\}.$

\begin{Definition}
  Let $V$ be a graded vertex operator coalgebra, define $C(V)$ to be the sub space of $V$ spanned by all elements $v\in V$ which satisfy $$\Res_z(\frac{(1+z)^{L(0)}}{z^2}\otimes \Id_V)\circ\Yup (z)v=0,$$which is equivalent to say$$C(V)=\Span\{v\in V|\Res_z(\frac{(1+z)^{L(0)}}{z^2}\otimes \Id_V)\circ\Yup (z)v=0\}.$$
\end{Definition}

It is obvious that $C(V)$ is nonempty and a vector space. From now on, we always assume $\dim C(V)<\infty.$
\begin{Lemma}\label{CV0}
  Let $V$ be a graded vertex operator coalgebra, for any $v\in C(V)$, define $$\Delta(v)=\Res_z(\frac{(1+z)^{L(0)}}{z}\otimes \Id_V)\circ\Yup (z)v.$$ Then, we have$$(\Id_V\otimes c)\circ \Delta(v)=v=(c\otimes \Id_V)\circ \Delta(v).$$
\end{Lemma}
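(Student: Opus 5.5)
We prove the two counit identities separately: each comes from one of the counit axioms of Definition \ref{Def2.17}, after expanding $\Yup(z)v$ by weights as in Remark \ref{Rmk3.2}. By linearity we may assume $v\in V_t$ is homogeneous. Write
$$\Yup(z)v=\sum_{k\in\mathbb{Z}}\Delta_k(v)z^{-k-1},\qquad \Delta_k(v)=\sum\sum_s v'_{t+k+1-s}\otimes v''_s .$$
Since $(1+z)^{L(0)}$ acts on $v'_{t+k+1-s}$ as $(1+z)^{t+k+1-s}=\sum_{i\ge0}\binom{t+k+1-s}{i}z^i$, taking $\Res_z$ of $z^{-1}(1+z)^{L(0)}$ against $z^{-k-1}$ picks out $i=-k-1$. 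Hence
$$\Delta(v)=\sum_{k\le -1}\sum\sum_s \binom{t+k+1-s}{-k-1}\,v'_{t+k+1-s}\otimes v''_s .$$

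\emph{Right counit identity} $(\Id_V\otimes c)\circ\Delta(v)=v$. We first argue at the level of formal series. Applying $(\Id_V\otimes c)$ kills every term with $s\neq 0$, since $c$ vanishes off $V_0$ by homogeneity. By (\ref{CI3.2}) and (\ref{CI3.3}), the series $(\Id_V\otimes c)\circ\Yup(z)v$ lies in $V[[z]]$ with constant term $v$. Therefore the coefficient of $z^0$, which is the $k=-1$ term, equals $v$, and all terms with $k\ge0$ are zero. The $k=-1$ term appears in $\Delta(v)$ with coefficient $\binom{\cdot}{0}=1$.

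The remaining issue is the terms with $k\le -2$ and $s=0$. These are the higher Taylor coefficients of $(\Id_V\otimes c)\circ\Yup(z)v$, and they enter $\Delta(v)$ with binomial weights. So the real content is that
$$\Res_z z^{-1}(1+z)^{L(0)}\,(\Id_V\otimes c)\circ\Yup(z)v=v.$$
To prove this, use the skew-symmetry of Lemma \ref{L2.1}:
$$(\Id_V\otimes c)\circ\Yup(z)=(c\otimes\Id_V)\circ\tau\circ\Yup(z)=(c\otimes \Id_V)\circ\Yup(-z)e^{zL(1)}.$$
By (\ref{LU3.1}), this equals $e^{zL(1)}v$; note that (\ref{LU3.1}) holds for every vector, in particular for $L(1)^n v$. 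Thus
$$\Delta(v)\mapsto \Res_z z^{-1}(1+z)^{L(0)}e^{zL(1)}v,$$
where $L(0)$ now acts on the first-factor weights. The $n$-th term $z^nL(1)^nv/n!$ has first-factor weight $t-n$ and carries $z^{n}$. This is the main obstacle. The residue keeps only the $z^0$ part, and a pure $z^{n}$ with $n\ge1$ multiplied by $(1+z)^{t-n}$ has no $z^0$ part, because $(1+z)^{t-n}$ contributes only nonnegative powers. So only $n=0$ survives, giving exactly $v$. (Here I will carefully track on which factor $L(0)$ acts; with the paper's convention $(1+z)^{L(0)}\otimes\Id_V$ it is the weight of the first factor, which equals $t+k+1$ when $s=0$. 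This makes the computation consistent.)

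\emph{Left counit identity} $(c\otimes\Id_V)\circ\Delta(v)=v$. Now $c$ sees only first factors of weight $0$, so we need $t+k+1-s=0$. On those terms $(1+z)^{L(0)}$ acts as $1$, so
$$(c\otimes\Id_V)\circ\Delta(v)=\Res_z z^{-1}(c\otimes\Id_V)\circ\Yup(z)v.$$
By (\ref{LU3.1}) this is $\Res_z z^{-1}v=v$. Note that neither part uses the defining condition of $C(V)$; that condition is needed later for well-definedness and coassociativity, not here.
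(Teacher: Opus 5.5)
Your conclusion is right, but the displayed expansion of $\Delta(v)$ has a sign error, and that error is what creates your ``main obstacle.'' Since $z^{-1}(1+z)^{w}z^{-k-1}=\sum_{i\geq0}\tbinom{w}{i}z^{i-k-2}$, the residue picks out $i=k+1$, not $i=-k-1$. Hence
\[
\Delta(v)=\sum_{k\geq -1}(\tbinom{L(0)}{k+1}\otimes\Id_V)\Delta_k(v).
\]
The sum runs over $k\geq-1$, and the terms with $k\leq -2$ never enter $\Delta(v)$. With the correct formula, the right counit identity follows at once from facts you already stated. For $k\geq0$, $(\Id_V\otimes c)\Delta_k(v)=0$ by (\ref{CI3.2}). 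The $k=-1$ term is $(\Id_V\otimes c)\Delta_{-1}(v)=v$ by (\ref{CI3.3}), with coefficient $\tbinom{L(0)}{0}=1$. That is exactly the paper's proof.

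The wrong display is not harmless. Apply $\Id_V\otimes c$ to it and use your identity $(\Id_V\otimes c)\circ\Yup(z)v=e^{zL(1)}v$. You get $\sum_{n\geq0}\tbinom{t-n}{n}\frac{1}{n!}L(1)^nv$, which for a general homogeneous $v\in V$ is not $v$. So the display contradicts your own final computation and must be corrected.

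Your argument survives because the last step evaluates $\Res_z z^{-1}(1+z)^{L(0)}(\Id_V\otimes c)\circ\Yup(z)v$ directly, not through that display. Each piece of that step is valid:
\begin{itemize}
\item the reduction is fine, since the two operators act on different tensor factors;
\item $(\Id_V\otimes c)\circ\Yup(z)v=e^{zL(1)}v$ does follow from Lemma \ref{L2.1} and (\ref{LU3.1});
\item the residue evaluation is correct.
\end{itemize}
But skew-symmetry is superfluous there. All you actually use is that this series lies in $V[[z]]$ with constant term $v$, which is precisely (\ref{CI3.2})--(\ref{CI3.3}).

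Your left counit argument is fine, except that it uses $c(V_s)=0$ for $s\neq0$, which the paper's definition does not state. The paper avoids this. The second tensor factors in Remark \ref{Rmk3.2} are linearly independent, so (\ref{LU3.1}) forces $c(v'_{t+k+1-s})=0$ for $k\neq-1$. For $k=-1$, $\Res_z z^{-1}(1+z)^{w}=1$ for every $w$. Your assumption can be derived, but it needs to be said. Let $\phi=c-c\circ\pi_0$, where $\pi_0$ is the projection onto $V_0$. Comparing weights in (\ref{LU3.1}) gives $(\phi\otimes\Id_V)\circ\Yup(z)=0$. Hence $\phi(v)=\phi((\Id_V\otimes c)\Delta_{-1}(v))=c((\phi\otimes\Id_V)\Delta_{-1}(v))=0$.
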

\proof First, let $v\in V_t$ be homogeneous, we have
    \begin{eqnarray*}
      (\Id_V\otimes c)\circ \Delta(v)&=&(\Id_V\otimes c)\circ\Res_z(\frac{(1+z)^{L(0)}}{z}\otimes \Id_V)\circ\Yup (z)v\\
      &=&\Res_z(\frac{(1+z)^{L(0)}}{z}\otimes \Id_V)\circ(\Id_V\otimes c)\circ\Yup (z)v\\
      &=&v.
    \end{eqnarray*}
Last identity follows from identities (\ref{CI3.2}), (\ref{CI3.3}), and $(1+z)^k=\sum_{i\geq0}\tbinom{k}{i}z^i$ for any $k\in\mathbb{Z}$.

Second, we have
    \begin{eqnarray*}
      &&(c\otimes \Id_V)\circ \Delta(v)\\
      &=&(c\otimes \Id_V)\circ\Res_z\frac{(1+z)^{L(0)}}{z}\otimes \Id_V\circ\Yup (z)v\\
      &=&(c\otimes \Id_V)\circ\Res_z\frac{(1+z)^{L(0)}}{z}\otimes \Id_V(\sum_{k\in\mathbb{Z}}\sum\sum_{s\in\mathbb{Z}}v'_{t+k+1-s}\otimes v''_{s}z^{-k-1})\\
      &=&(c\otimes \Id_V)\circ\Res_z\sum_{k\in\mathbb{Z}}\sum\sum_{s\in\mathbb{Z}}\frac{(1+z)^{t+k+1-s}}{z}v'_{t+k+1-s}\otimes v''_{s}z^{-k-1}\\
      &=&\Res_z\sum_{k\in\mathbb{Z}}\sum\sum_{s\in\mathbb{Z}}\frac{(1+z)^{t+k+1-s}}{z}c(v'_{t+k+1-s})\otimes v''_{s}z^{-k-1}\\
      &=&\Res_z\sum\frac{1}{z}1\otimes v=v.
    \end{eqnarray*}
The fifth identity follows from Remark \ref{Rmk3.2}.

Now for inhomogeneous $v\in V$, using linearity, we can prove the desired identities.              $\hfill\Box$

\begin{Lemma}\label{CV2}
  If $v\in C(V)$, we have $$\Res_z(\frac{(1+z)^{L(0)+k}}{z^{2+l}}\otimes \Id_V)\circ\Yup (z)v=0,$$ for all $l\geq k\geq0.$
\end{Lemma}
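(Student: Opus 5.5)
The plan is to deduce the statement from Zhu's classical lemma for the dual vertex operator algebra $V'$ of Proposition \ref{VC-VA}(ii), using the pairing between $V'\otimes V'$ and $V\otimes V$. Recall that for a vertex operator algebra $\mathcal {V}$ Zhu defines
$$a\circ b=\Res_zY(a,z)\frac{(1+z)^{\wt a}}{z^2}b,\qquad O(\mathcal {V})=\Span\{a\circ b\mid a,b\in\mathcal {V}\}.$$
His Lemma 2.1.2 in \cite{Z} states that
$$\Res_zY(a,z)\frac{(1+z)^{\wt a+k}}{z^{2+l}}b\in O(\mathcal {V})\quad\text{for all } l\geq k\geq 0.$$
The key observation will be that $C(V)$ is exactly the annihilator of $O(V')$ in $V$. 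The lemma is then the dual of Zhu's lemma.

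\textbf{Step 1: homogeneous reduction and the pairing identity.} Each defining condition only involves the coefficients of $\Yup(z)v$, and $\Yup$ preserves the grading. So the set of $v$ satisfying the defining condition of $C(V)$ is already a graded subspace, and it suffices to treat homogeneous $v\in V_t$. For homogeneous $f\in V'_p=V_p^*$ and $g\in V'$, the weight of $f$ in $V'$ equals $p$, and $L(0)$ acts by $p$ on the first tensor factor $v'_p$. Hence Proposition \ref{VC-VA}(ii) gives, for all $k,l$,
$$\Big(f\otimes g,\ \Res_z\big(\tfrac{(1+z)^{L(0)+k}}{z^{2+l}}\otimes\Id_V\big)\circ\Yup(z)v\Big)=\Big(\Res_zY(f,z)\tfrac{(1+z)^{\wt f+k}}{z^{2+l}}g,\ v\Big).$$
Only finitely many powers of $z$ contribute, because $\Delta_k(v)=0$ for $k\ll0$. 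So both residues are finite sums and the identity is a termwise rewriting.

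\textbf{Step 2: $C(V)=O(V')^{\perp}$ and conclusion.} Each $V_s$ is finite-dimensional, so an element of $V\otimes V$ is zero if and only if it pairs to zero with every $f\otimes g$, $f,g\in V'$. Taking $k=l=0$ in Step 1, we get $v\in C(V)$ if and only if $(f\circ g,v)=0$ for all $f,g$, that is, $v\perp O(V')$. Now let $v\in C(V)$ and $l\geq k\geq0$. By Zhu's lemma, the right-hand side of the pairing identity is the pairing of $v$ with an element of $O(V')$, so it vanishes. Since this holds for all $f,g$, the element $\Res_z\big(\tfrac{(1+z)^{L(0)+k}}{z^{2+l}}\otimes\Id_V\big)\circ\Yup(z)v$ of $V\otimes V$ is zero, which is the claim.

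\textbf{Main obstacle and fallback.} The delicate point is legitimacy of invoking Proposition \ref{VC-VA}(ii), whose hypotheses are $V_0\nsubseteq\ker c$ and $V_2\nsubseteq\ker\rho$. If one prefers to avoid these hypotheses, I would reprove Zhu's argument directly on the coalgebra side by a double induction.

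For $k=0$, I would induct on $l$, using identity (\ref{L(1D)3.6}), $(L(1)\otimes\Id_V)\circ\Yup(z)=\frac{d}{dz}\Yup(z)$. This is the dual of Zhu's $L(-1)$-derivative trick, combined with the rule $\Res_z\frac{d}{dz}f\cdot g=-\Res_zf\frac{d}{dz}g$. The difficulty is that it produces an $L(1)$ on the first tensor factor, which is not of the defining form.

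To handle that, I would pass to a second vector: applying $(\Id\otimes\Yup(w))$ and using weak coassociativity (Proposition of \cite{H2}), one relates the defining condition for $v$ to the conditions for the second tensor factors. This mirrors Zhu's use of $a\circ b$ with arbitrary $a$.

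The step from $k$ to $k-1$ is then the elementary identity
$$\frac{(1+z)^{L(0)+k}}{z^{2+l}}=\frac{(1+z)^{L(0)+k-1}}{z^{2+l}}+\frac{(1+z)^{L(0)+k-1}}{z^{1+l}}.$$
Both terms have $l'\geq k-1$, so they vanish by the induction hypothesis.

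Given the extra bookkeeping this fallback requires, I expect the duality route of Steps 1 and 2 to be the intended proof.
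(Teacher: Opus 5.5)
Your main route is correct, but it is not the paper's. The paper stays entirely inside $V\otimes V$. For $k=0$ it inducts on $l$:
it applies $L(1)\otimes\Id_V$ to the vanishing element $X_l=\Res_z(\frac{(1+z)^{L(0)}}{z^{2+l}}\otimes\Id_V)\circ\Yup(z)v$;
it moves $L(1)$ past $(1+z)^{L(0)}$, which raises the exponent by one;
it replaces $(L(1)\otimes\Id_V)\circ\Yup(z)$ by $\frac{d}{dz}\Yup(z)$ and integrates by parts;
it writes $\frac{(1+z)^{L(0)+1}}{z^{3+l}}=\frac{(1+z)^{L(0)}}{z^{3+l}}+\frac{(1+z)^{L(0)}}{z^{2+l}}$ to isolate $(l+2)X_{l+1}$.
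General $k\leq l$ then follows by expanding $(1+z)^k$ binomially. That is Zhu's proof of his Lemma 2.1.2 transposed by hand. You instead prove $C(V)=O(V')^{\perp}$ (the paper's later Lemma \ref{Lm5.1}) and quote Zhu.

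Your version is shorter and explains why the lemma holds. The paper's is self-contained in Section 3 and does not go through Proposition \ref{VC-VA}(ii) or its hypotheses $V_0\nsubseteq\ker c$, $V_2\nsubseteq\ker\rho$.

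Neither of your two worries is real, though. First, Zhu's argument uses only weights and $Y(L(-1)f,z)=\frac{d}{dz}Y(f,z)$. For the transposed operations on $V'$, with $L(-1)$ the transpose of $L(1)$, this is just the transpose of $(L(1)\otimes\Id_V)\circ\Yup(z)=\frac{d}{dz}\Yup(z)$, so no hypothesis is needed. Second, the obstacle in your fallback is illusory. The induction hypothesis is the vanishing of the whole tensor $X_l$; this is the counterpart of Zhu's hypothesis holding for every $a$, which is what lets him pass to $L(-1)a$. So $(L(1)\otimes\Id_V)X_l=0$ comes for free, and no second vector or weak coassociativity is required. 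That is exactly the paper's proof.

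Two slips in your Step 1 are harmless.

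First, $C(V)$ is not a graded subspace in general. In a fixed total degree the residue collects $\Delta_j(v_t)$ from different homogeneous components $v_t$ of $v$. Indeed, by Proposition \ref{Prop}, a nonzero homogeneous element of $C(V)$ must have weight $0$. You never need the reduction, since the pairing identity holds for arbitrary $v$ by linearity; only homogeneity of $f$ is used.

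Second, the residue involves $\Delta_j(v)$ for all $j\geq-2-l$, so $\Delta_j(v)=0$ for $j\ll0$ is not what gives finiteness. Each pairing is finite because, for fixed $f,g$, $(f\otimes g,\Delta_j(v))=(f_jg,v)$ vanishes for $j\gg0$. Nondegeneracy of the pairing still detects vanishing, in the completion $\prod_{p,s}V_p\otimes V_s$ if necessary.
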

\proof Since $$\frac{(1+z)^{L(0)+k}}{z^{2+l}}=\sum_{i\geq0}\tbinom{k}{i}\frac{(1+z)^{L(0)}}{z^{2+l-i}}.$$ By linearity, it is enough to show the identity for $l\geq0.$

Using induction, it is true for $l=0$ by definition. Suppose $\Res_z(\frac{(1+z)^{L(0)}}{z^{2+l}}\otimes \Id_V)\circ\Yup (z)v=0$, now we have
    \begin{eqnarray*}
      0&=&(L(1)\otimes \Id_V)\circ\Res_z(\frac{(1+z)^{L(0)}}{z^{2+l}}\otimes \Id_V)\circ\Yup (z)v\\
      &=&\Res_z(\frac{(1+z)^{L(0)+1}}{z^{2+l}}\otimes \Id_V)\circ (L(1)\otimes \Id_V)\circ\Yup (z)v\\
      &=&\Res_z(\frac{(1+z)^{L(0)+1}}{z^{2+l}}\otimes \Id_V)\circ\frac{d}{dz}\Yup (z)v\\
      &=&-\Res_z\frac{d}{dz}(\frac{(1+z)^{L(0)+1}}{z^{2+l}}\otimes \Id_V)\circ\Yup (z)v\\
      &=&-\Res_z((L(0)+1)\otimes \Id_V)\circ(\frac{(1+z)^{L(0)}}{z^{2+l}}\otimes \Id_V)\circ\Yup (z)v\\
      &&+\Res_z(l+2)(\frac{(1+z)^{L(0)+1}}{z^{2+l+1}}\otimes \Id_V)\circ\Yup (z)v\\
      &=&-\Res_z((L(0)+1)\otimes \Id_V)\circ(\frac{(1+z)^{L(0)}}{z^{2+l}}\otimes \Id_V)\circ\Yup (z)v\\
      &&+\Res_z(l+2)(\frac{(1+z)^{L(0)}}{z^{2+l}}\otimes \Id_V)\circ\Yup (z)v\\
      &&+\Res_z(l+2)(\frac{(1+z)^{L(0)}}{z^{2+l+1}}\otimes \Id_V)\circ\Yup (z)v.
    \end{eqnarray*}
The second identity follows since $L(1)$ is homogeneous of degree $-1$. In last identity, by assumption, the first and second terms are equal to $0$, this implies the last term is also $0.$ This completes the induction.                                     $\hfill\Box$

\begin{Lemma}\label{L(0)delta}
Let $V$ be a graded vertex operator coalgebra. Then, we have $$\Delta_k\circ L(0)=(L(0)\otimes L(0)-k-1)\circ\Delta_k.$$
\end{Lemma}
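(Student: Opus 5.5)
The plan is to derive the identity directly from the grading condition on $\Delta_k$ in the definition of a graded vertex operator coalgebra, together with the fact that $V_s$ is exactly the $L(0)$-eigenspace of eigenvalue $s$. Here I read $L(0)\otimes L(0)$ as the weight operator on $V\otimes V$ for the natural gradation $(V\otimes V)_t=\oplus_{s}V_{t-s}\otimes V_s$. Concretely, it acts on $V_{t-s}\otimes V_s$ by the scalar $t$, i.e. as $L(0)\otimes \Id_V+\Id_V\otimes L(0)$. This is the only reading under which the stated formula is homogeneous of the right degree, so the first step is to fix this convention explicitly.

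Since $V=\oplus_{s\in\mathbb{Z}}V_s$ and both sides of the identity are linear, it suffices to check it on a homogeneous $v\in V_s$. On the left, $L(0)v=sv$, so $\Delta_k(L(0)v)=s\,\Delta_k(v)$. On the right, the definition gives $\Delta_k(v)\in (V\otimes V)_{s+k+1}$. Writing it as in Remark \ref{Rmk3.2}(i), $\Delta_k(v)=\sum\sum_{j} v'_{s+k+1-j}\otimes v''_j$, each summand $v'_{s+k+1-j}\otimes v''_j$ is an eigenvector of the weight operator with eigenvalue $(s+k+1-j)+j=s+k+1$. Hence
$$(L(0)\otimes L(0)-k-1)\Delta_k(v)=(s+k+1-k-1)\Delta_k(v)=s\,\Delta_k(v),$$
which matches the left side.

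Finally, extend to arbitrary $v$ by linearity, since each element of $V$ is a finite sum of homogeneous components. There is no real obstacle; the only delicate point is the notational one of interpreting $L(0)\otimes L(0)$ as the total-weight operator $L(0)\otimes\Id_V+\Id_V\otimes L(0)$ rather than the literal tensor product of operators. I will state this convention at the start so that later uses of the lemma, for example commuting $(1+z)^{L(0)}$ past $\Yup(z)$, are unambiguous.
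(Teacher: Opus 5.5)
Your proof is correct and matches the paper's, which simply checks the identity on homogeneous $v\in V_t$ using the degree condition $\Delta_k(V_t)\subseteq (V\otimes V)_{t+k+1}$ and then extends by linearity. Reading $L(0)\otimes L(0)$ as the total weight operator $L(0)\otimes\Id_V+\Id_V\otimes L(0)$ is the right convention: under the literal tensor product of operators the identity would be false. It is also exactly how the lemma is used in Lemma \ref{CV3}, where $(1+z)^{L(0)}$-factors are moved past $\Delta_{i+j}$.
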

\proof If $v\in V_t$ is homogeneous, it is trivial. By linearity, it is also true for arbitrary $v\in V.$       $\hfill\Box$

\begin{Lemma}\label{CV3}
Let $V$ be a graded vertex operator coalgebra, and $C(V)$ and $\Delta$ defined as above. If $v\in C(V)$, we have $\Delta(v)\in C(V)\otimes C(V)$ and $$(\Id_V\otimes \Delta)\circ \Delta(v)=(\Delta\otimes \Id_V)\circ \Delta(v).$$
\end{Lemma}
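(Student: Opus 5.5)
The plan is to dualize Zhu's two key facts about $A(\mathcal V)$: that $O(\mathcal V)$ is a two-sided ideal, and that $*$ is associative modulo $O(\mathcal V)$. Here $C(V)$ plays the role of the annihilator of $O(V')$, and $\Delta$ is the transpose of $*$.

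Throughout, I take $v\in V_t$ homogeneous and extend by linearity. The main tools are the Jacobi identity (\ref{JI3.4}), skew-symmetry (Lemma \ref{L2.1}), and the grading relation of Lemma \ref{L(0)delta}. The latter lets me move $(1+z)^{L(0)}$ factors through the iterated coproducts $(\Id_V\otimes\Yup(z_2))\circ\Yup(z_1)$ and $(\Yup(z_0)\otimes\Id_V)\circ\Yup(z_2)$, at the cost of shifting exponents by the degrees of the modes.

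\emph{Step 1: $\Delta(v)\in V\otimes C(V)$.} I apply $\Id_V\otimes\Res_{z_2}(\frac{(1+z_2)^{L(0)}}{z_2^2}\otimes\Id_V)\circ\Yup(z_2)$ to $\Delta(v)$. The result is a double residue in $z_1,z_2$ of $(\Id_V\otimes\Yup(z_2))\circ\Yup(z_1)v$ against $\frac{(1+z_1)^{L(0)}}{z_1}\cdot\frac{(1+z_2)^{L(0)}}{z_2^2}$. I multiply the Jacobi identity by a suitable function of $z_1,z_2$ and take $\Res_{z_0}\Res_{z_1}$. This is the exact transpose of Zhu's computation showing $a*(b\circ c)\in O(\mathcal V)$, and also uses $(1+z_1)=(1+z_2)(1+\frac{z_0}{1+z_2})$. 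It rewrites the expression as a combination of
\[
\Res_{z_2}\Bigl(\tfrac{(1+z_2)^{L(0)+k}}{z_2^{2+l}}\otimes\Id_V\Bigr)\circ\Yup(z_2)v\quad (l\ge k\ge 0),
\]
with operators from $\Yup(z_0)$ applied on the left factor, plus a $\tau$-term. The displayed pieces vanish by Lemma \ref{CV2}.

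\emph{Step 2: $\Delta(v)\in C(V)\otimes V$.} This is handled symmetrically. Either I use skew-symmetry $\tau\circ\Yup(z)=\Yup(-z)\circ e^{zL(1)}$, which swaps the roles of the factors (the analogue of Zhu's $u*v-v*u\equiv\Res_z(1+z)^{\wt u-1}Y(u,z)v$), or I apply the Jacobi identity with the roles of the factors exchanged. Either way the obstruction reduces again to Lemma \ref{CV2}, using also $(L(1)\otimes\Id_V)\circ\Yup(z)=\frac{d}{dz}\Yup(z)$ to absorb the exponential. Since $(C(V)\otimes V)\cap(V\otimes C(V))=C(V)\otimes C(V)$, the two steps together give $\Delta(v)\in C(V)\otimes C(V)$.

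\emph{Step 3: coassociativity.} I write $(\Delta\otimes\Id_V)\circ\Delta(v)$ as
\[
\Res_{z_0}\Res_{z_2}\frac{(1+z_0)^{L(0)}}{z_0}\frac{(1+z_2)^{L(0)}}{z_2}\,(\Yup(z_0)\otimes\Id_V)\circ\Yup(z_2)v ,
\]
with $L(0)$'s acting on the appropriate tensor factors. I write $(\Id_V\otimes\Delta)\circ\Delta(v)$ as the analogous double residue of $(\Id_V\otimes\Yup(z_2))\circ\Yup(z_1)v$. Taking residues of the Jacobi identity against $\frac{(1+z_1)^{L(0)}(1+z_2)^{L(0)}}{z_1z_2}$ converts one into the other. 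The leftover terms are again of the form killed by Lemma \ref{CV2}, exactly as in Zhu's proof of associativity modulo $O(\mathcal V)$; the $\tau$-term must be shown to contribute nothing after the residues.

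I expect the main obstacle to be bookkeeping of the $L(0)$-exponents. The $(1+z)^{L(0)}$ in $\Delta$ acts on the first tensor factor, whose weight after a further coproduct is redistributed between two factors by Lemma \ref{L(0)delta}. Matching $(1+z_1)^{L(0)}$ with $(1+z_0)^{L(0)}(1+z_2)^{L(0)+\cdots}$ requires the binomial expansion $(1+z_2+z_0)^{n}=(1+z_2)^n(1+\frac{z_0}{1+z_2})^n$ to be handled carefully as formal series. I would first carry out this computation on the dual side: pair with $f\otimes g\otimes h\in V'^{\otimes 3}$ and use Proposition \ref{VC-VA}(ii), so that every identity becomes the transpose of a known identity in Zhu's algebra $A(V')$. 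I would then check that the finite-dimensionality of $C(V)$ and the homogeneity of $V$ justify transferring the statements back.
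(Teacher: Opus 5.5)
Your plan is essentially the paper's proof. It has the same three steps (second factor, first factor, coassociativity), and each is carried out by rewriting with the Jacobi identity (\ref{JI3.4}), moving the $(1+z)^{L(0)}$ factors through with Lemma \ref{L(0)delta}, and killing the leftovers with Lemma \ref{CV2}; in Step 2 the paper takes your second option, starting from $(\Yup(z_1)\otimes\Id_V)\circ\Delta(v)$ and applying the Jacobi identity with the roles exchanged. The $\tau$-terms you leave open are harmless: in Step 1 the factor $\frac{(1+z_2)^{L(0)}}{z_2^2}$ lands on the untouched first tensor factor of $\Yup(z_2)v$, so that term vanishes directly because $v\in C(V)$, and in Step 3 it carries $\frac{(1+z_2)^{L(0)}}{z_2^{2+i}}$ with $i\geq 0$ and is killed by Lemma \ref{CV2}.
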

\proof Since $C(V)$ is a subspace of $V$, we have $C(V)\otimes C(V)=V\otimes C(V)\bigcap C(V)\otimes V$.

First, we show $\Delta(v)\in V\otimes C(V)$ for any $v\in C(V)$. To prove this relation, using the definition of $C(V)$, it is enough to show $$(\Id_V\otimes \Res_{z_2}\frac{(1+z_2)^{L(0)}}{z_2^2}\otimes\Id_V)\circ (\Id_V\otimes\Yup (z_2))\circ\Delta(v)=0.$$Now, we have
  \begin{eqnarray*}
    &&(\Id_V\otimes \Res_{z_2}\frac{(1+z_2)^{L(0)}}{z_2^2}\otimes\Id_V)\circ(\Id_V\otimes\Yup (z_2))\circ \Delta(v)\\
    &=&(\Id_V\otimes \Res_{z_2}\frac{(1+z_2)^{L(0)}}{z_2^2}\otimes\Id_V)\circ(\Id_V\otimes\Yup (z_2))\circ \Res_{z_1}\frac{(1+z_1)^{L(0)}}{z_1}\otimes\Id_V\circ\Yup (z_1)v\\
    &=& \Res_{z_1,z_2}(\frac{(1+z_1)^{L(0)}}{z_1}\otimes\frac{(1+z_2)^{L(0)}}{z_2^2}\otimes\Id_V)\circ(\Id_V\otimes\Yup (z_2))\circ\Yup (z_1)v\\
    &=& \Res_{z_0,z_1,z_2}(\frac{(1+z_1)^{L(0)}}{z_1}\otimes\frac{(1+z_2)^{L(0)}}{z_2^2}\otimes\Id_V)\circ\\
    &&\{-z_0^{-1}\delta(\frac{z_2-z_1}{-z_0})(\tau\otimes\Id_V)\circ(\Id_V\otimes \Yup (z_1))\circ \Yup (z_2)v\\
    &&~~~~~~~~~~+z_1^{-1}\delta(\frac{z_2+z_0}{z_1})(\Yup (z_0)\otimes \Id_V)\circ \Yup (z_2)v\}\\
    &=& (\tau\otimes\Id_V)\circ\Res_{z_1,z_2}(\frac{(1+z_2)^{L(0)}}{z_2^2}\otimes\frac{(1+z_1)^{L(0)}}{z_1}\otimes\Id_V)\circ(\Id_V\otimes \Yup (z_1))\circ \Yup (z_2)v\\
    && +\Res_{z_0,z_2}(\frac{(1+z_2+z_0)^{L(0)}}{z_2+z_0}\otimes\frac{(1+z_2)^{L(0)}}{z_2^2}\otimes\Id_V)\circ (\Yup (z_0)\otimes \Id_V)\circ \Yup (z_2)v\\
    &=& (\tau\otimes\Id_V)\circ\Res_{z_1,z_2}(\Id_V\otimes\frac{(1+z_1)^{L(0)}}{z_1}\otimes\Id_V)\circ(\frac{(1+z_2)^{L(0)}}{z_2^2}\otimes \Yup (z_1))\circ \Yup (z_2)v\\
    && +\Res_{z_0,z_2}\sum_{i\geq0,j\geq0,l}\tbinom{L(0)}{i}\tbinom{-1}{j}((1+z_2)^{L(0)-i}\otimes\frac{(1+z_2)^{L(0)}}{z_2^{2+1+j}}\otimes\Id_V)\circ \\
    &&(\Delta_l\otimes \Id_V)\circ \Yup (z_2)vz_0^{-l-1+i+j}\\
    &=& (\tau\otimes\Id_V)\circ\Res_{z_1,z_2}(\Id_V\otimes\frac{(1+z_1)^{L(0)}}{z_1}\otimes\Id_V)\circ(\Id_V\otimes \Yup (z_1))\circ\\
    &&(\frac{(1+z_2)^{L(0)}}{z_2^2}\otimes \Id_V)\circ \Yup (z_2)v\\
    && +\Res_{z_2}\sum_{i\geq0,j\geq0}\tbinom{L(0)}{i}\tbinom{-1}{j}((1+z_2)^{L(0)-i}\otimes\frac{(1+z_2)^{L(0)}}{z_2^{2+1+j}}\otimes\Id_V)\circ (\Delta_{i+j}\otimes \Id_V)\circ \Yup (z_2)v\\
    &=&\Res_{z_2}\sum_{i\geq0,j\geq0}\tbinom{L(0)}{i}\tbinom{-1}{j}(\Delta_{i+j}\otimes \Id_V)\circ(\frac{(1+z_2)^{L(0)+1+j}}{z_2^{2+1+j}}\otimes\Id_V)\circ \Yup (z_2)v\\
    &=&0.
  \end{eqnarray*}
The first identity follows from definition. The second and fourth identities follow from the composition properties of linear maps. The third identity follows from Jacobi identity (\ref{JI3.4}). The fifth and sixth identities follow from the composition properties of linear maps and definitions. For the seventh identity, the first term disappears since $v\in C(V)$. Using Lemma \ref{L(0)delta}, we get the second term. By Lemma \ref{CV2}, we get the eighth identity.

Second, we show $\Delta(v)\in C(V)\otimes V$ for any $v\in C(V)$. To prove this relation, using the definition of $C(V)$, it is enough to show $$(\Res_{z_1}\frac{(1+z_1)^{L(0)}}{z_1^2}\otimes\Id_V \otimes\Id_V)\circ (\Yup (z_1)\otimes\Id_V)\circ\Delta(v)=0.$$Now, we have
  \begin{eqnarray*}
    &&(\Res_{z_1}\frac{(1+z_1)^{L(0)}}{z_1^2}\otimes\Id_V \otimes\Id_V)\circ (\Yup (z_1)\otimes\Id_V)\circ\Delta(v)\\
    &=&\sum_{i\geq0}(\tbinom{L(0)}{i}\otimes\Id_V \otimes\Id_V)\circ (\Delta_{i-2}\otimes\Id_V)\circ\Res_{z_2}(\frac{(1+z_2)^{L(0)}}{z_2}\otimes \Id_V)\circ\Yup (z_2)v\\
    &=&\Res_{z_2}\sum_{i\geq0}(\tbinom{L(0)}{i}\otimes\Id_V \otimes\Id_V)\circ (\frac{(1+z_2)^{L(0)-i}}{z_2}\otimes (1+z_2)^{L(0)+1}\otimes \Id_V)\circ\\
    &&(\Delta_{i-2}\otimes\Id_V)\circ\Yup (z_2)v\\
    &=&\Res_{z_0,z_1,z_2}\sum_{i\geq0}(\tbinom{L(0)}{i}\otimes\Id_V \otimes\Id_V)\circ (\frac{(1+z_2)^{L(0)-i}}{z_2}\otimes (1+z_2)^{L(0)+1}\otimes \Id_V)\circ\\
    &&z_1^{-1}\delta(\frac{z_2+z_0}{z_1})(\Yup (z_0)\otimes\Id_V)\circ\Yup (z_2)vz_0^{i-2}\\
    &=&\Res_{z_0,z_1,z_2}\sum_{i\geq0}z_0^{i-2}(\tbinom{L(0)}{i}\otimes\Id_V \otimes\Id_V)\circ (\frac{(1+z_2)^{L(0)-i}}{z_2}\otimes (1+z_2)^{L(0)+1}\otimes \Id_V)\circ\\
    &&\{z_0^{-1}\delta(\frac{z_1-z_2}{z_0})(\Id_V\otimes \Yup (z_2))\circ \Yup (z_1)v\\
    &&~~~~~~~~~~-z_0^{-1}\delta(\frac{z_2-z_1}{-z_0})(\tau\otimes\Id_V)\circ(\Id_V\otimes \Yup (z_1))\circ \Yup (z_2)v\}\\
    &=&\Res_{z_1,z_2}\sum_{i\geq0}(\tbinom{L(0)}{i}(z_1-z_2)^{i-2}\otimes\Id_V \otimes\Id_V)\circ\\
     &&~~~~~~(\frac{(1+z_2)^{L(0)-i}}{z_2}\otimes (1+z_2)^{L(0)+1}\otimes \Id_V)\circ(\Id_V\otimes \Yup (z_2))\circ \Yup (z_1)v\\
    &&+\Res_{z_1,z_2}\sum_{i\geq0}(\tbinom{L(0)}{i}(-z_2+z_1)^{i-2}\otimes\Id_V \otimes\Id_V)\circ \\
    &&~~~~~~(\frac{(1+z_2)^{L(0)-i}}{z_2}\otimes (1+z_2)^{L(0)+1}\otimes \Id_V)\circ(\tau\otimes\Id_V)\circ(\Id_V\otimes \Yup (z_1))\circ \Yup (z_2)v\\
    &=&\Res_{z_1,z_2}(\frac{(1+z_1)^{L(0)}}{(z_1-z_2)^2}\otimes \frac{(1+z_2)^{L(0)+1}}{z_2}\otimes \Id_V)\circ(\Id_V\otimes \Yup (z_2))\circ \Yup (z_1)v\\
    &&+\Res_{z_1,z_2}(\frac{(1+z_1)^{L(0)}}{(-z_2+z_1)^2}\otimes \frac{(1+z_2)^{L(0)+1}}{z_2}\otimes \Id_V)\circ(\tau\otimes\Id_V)\circ(\Id_V\otimes \Yup (z_1))\circ \Yup (z_2)v\\
    &=&\Res_{z_1,z_2}\sum_{i\geq0}(\tbinom{-2}{i}(-1)^i\frac{(1+z_1)^{L(0)}}{z_1^{2+i}}\otimes \frac{(1+z_2)^{L(0)+1}}{z_2^{1-i}}\otimes \Id_V)\circ(\Id_V\otimes \Yup (z_2))\circ \Yup (z_1)v\\
    &&+\Res_{z_1,z_2}\sum_{i\geq0}(\tbinom{-2}{i}(-1)^{-2-i}\frac{(1+z_1)^{L(0)}}{z_1^{-i}}\otimes \frac{(1+z_2)^{L(0)+1}}{z_2^{3+i}}\otimes \Id_V)\circ\\
    &&~~~~(\tau\otimes\Id_V)\circ(\Id_V\otimes \Yup (z_1))\circ \Yup (z_2)v\\
    &=&\Res_{z_1,z_2}\sum_{i\geq0}(\tbinom{-2}{i}(-1)^i\frac{(1+z_1)^{L(0)}}{z_1^{2+i}}\otimes \frac{(1+z_2)^{L(0)+1}}{z_2^{1-i}}\otimes \Id_V)\circ(\Id_V\otimes \Yup (z_2))\circ \Yup (z_1)v\\
    &&+\Res_{z_1,z_2}(\tau\otimes\Id_V)\circ\sum_{i\geq0}(\tbinom{-2}{i}(-1)^{-2-i}\frac{(1+z_2)^{L(0)+1}}{z_2^{3+i}}\otimes \frac{(1+z_1)^{L(0)}}{z_1^{-i}}\otimes \Id_V)\circ\\
    &&~~~~(\Id_V\otimes \Yup (z_1))\circ \Yup (z_2)v
  \end{eqnarray*}
  \begin{eqnarray*}
    &=&\Res_{z_1,z_2}\sum_{i\geq0}(\tbinom{-2}{i}(-1)^i\Id_V\otimes \frac{(1+z_2)^{L(0)+1}}{z_2^{1-i}}\otimes \Id_V)\circ(\frac{(1+z_1)^{L(0)}}{z_1^{2+i}}\otimes \Id_V\otimes \Id_V)\circ\\
    &&~~~~(\Id_V\otimes \Yup (z_2))\circ \Yup (z_1)v\\
    &&+\Res_{z_1,z_2}(\tau\otimes\Id_V)\circ\sum_{i\geq0}(\tbinom{-2}{i}(-1)^{-2-i}\Id_V\otimes \frac{(1+z_1)^{L(0)}}{z_1^{-i}}\otimes \Id_V)\circ\\
    &&~~~~(\frac{(1+z_2)^{L(0)+1}}{z_2^{3+i}}\otimes\Id_V\otimes\Id_V)\circ(\Id_V\otimes \Yup (z_1))\circ \Yup (z_2)v\\
    &=&\Res_{z_1,z_2}\sum_{i\geq0}(\tbinom{-2}{i}(-1)^i\Id_V\otimes \frac{(1+z_2)^{L(0)+1}}{z_2^{1-i}}\otimes \Id_V)\circ(\frac{(1+z_1)^{L(0)}}{z_1^{2+i}}\otimes \Yup (z_2))\circ \Yup (z_1)v\\
    &&+\Res_{z_1,z_2}(\tau\otimes\Id_V)\circ\sum_{i\geq0}(\tbinom{-2}{i}(-1)^{-2-i}\Id_V\otimes \frac{(1+z_1)^{L(0)}}{z_1^{-i}}\otimes \Id_V)\circ\\
    &&~~~~(\frac{(1+z_2)^{L(0)+1}}{z_2^{3+i}}\otimes \Yup (z_1))\circ \Yup (z_2)v\\
    &=&\Res_{z_1,z_2}\sum_{i\geq0}(\tbinom{-2}{i}(-1)^i\Id_V\otimes \frac{(1+z_2)^{L(0)+1}}{z_2^{1-i}}\otimes \Id_V)\circ(\Id_V\otimes \Yup (z_2))\circ\\
    &&~~~~(\frac{(1+z_1)^{L(0)}}{z_1^{2+i}}\otimes \Id_V)\circ \Yup (z_1)v\\
    &&+\Res_{z_1,z_2}(\tau\otimes\Id_V)\circ\sum_{i\geq0}(\tbinom{-2}{i}(-1)^{-2-i}\Id_V\otimes \frac{(1+z_1)^{L(0)}}{z_1^{-i}}\otimes \Id_V)\circ\\
    &&~~~~(\Id_V\otimes \Yup (z_1))\circ(\frac{(1+z_2)^{L(0)+1}}{z_2^{3+i}}\otimes\Id_V)\circ \Yup (z_2)v\\
    &=&0.
  \end{eqnarray*}
The second identity follows from Lemma \ref{L(0)delta}. The fourth identity follows from Jacobi identity (\ref{JI3.4}). The eighth to eleventh identities follow from composition properties of linear maps. Last identity follows from Lemma \ref{CV2}. Others follow from definitions.

From above two steps, we can see that $\Delta(v)\in C(V)\otimes C(V)$ for any $v\in C(V)$.

The proof of the coassociativity is similar.  Let $v\in C(V)$, we have
  \begin{eqnarray*}
    &&(\Delta\otimes \Id_V)\circ \Delta(v)\\
    &=&(\Res_{z_1}\frac{(1+z_1)^{L(0)}}{z_1}\otimes\Id_V \otimes\Id_V)\circ (\Yup (z_1)\otimes\Id_V)\circ\Delta(v)\\
    &=&\sum_{i\geq0}(\tbinom{L(0)}{i}\otimes\Id_V \otimes\Id_V)\circ (\Delta_{i-1}\otimes\Id_V)\circ\Res_{z_2}(\frac{(1+z_2)^{L(0)}}{z_2}\otimes \Id_V)\circ\Yup (z_2)v\\
    &=&\Res_{z_0,z_1,z_2}\sum_{i\geq0}(\tbinom{L(0)}{i}\otimes\Id_V \otimes\Id_V)\circ (\frac{(1+z_2)^{L(0)-i}}{z_2}\otimes (1+z_2)^{L(0)}\otimes \Id_V)\circ\\
    &&z_1^{-1}\delta(\frac{z_2+z_0}{z_1})(\Yup (z_0)\otimes\Id_V)\circ\Yup (z_2)vz_0^{i-1}
  \end{eqnarray*}
  \begin{eqnarray*}
    &=&\Res_{z_1,z_2}(\frac{(1+z_1)^{L(0)}}{z_1-z_2}\otimes \frac{(1+z_2)^{L(0)}}{z_2}\otimes \Id_V)\circ(\Id_V\otimes \Yup (z_2))\circ \Yup (z_1)v\\
    &&+\Res_{z_1,z_2}(\frac{(1+z_1)^{L(0)}}{-z_2+z_1}\otimes \frac{(1+z_2)^{L(0)}}{z_2}\otimes \Id_V)\circ(\tau\otimes\Id_V)\circ(\Id_V\otimes \Yup (z_1))\circ \Yup (z_2)v\\
    &=&\Res_{z_1,z_2}\sum_{i\geq0}(\tbinom{-1}{i}(-1)^i\frac{(1+z_1)^{L(0)}}{z_1^{1+i}}\otimes \frac{(1+z_2)^{L(0)}}{z_2^{1-i}}\otimes \Id_V)\circ(\Id_V\otimes \Yup (z_2))\circ \Yup (z_1)v\\
    &&+\Res_{z_1,z_2}(\tau\otimes\Id_V)\circ\sum_{i\geq0}(\tbinom{-1}{i}(-1)^{-1-i}\frac{(1+z_2)^{L(0)}}{z_2^{2+i}}\otimes \frac{(1+z_1)^{L(0)}}{z_1^{-i}}\otimes \Id_V)\circ\\
    &&~~~~(\Id_V\otimes \Yup (z_1))\circ \Yup (z_2)v\\
    &=&\Res_{z_1,z_2}\sum_{i\geq0}(\tbinom{-1}{i}(-1)^i\Id_V\otimes \frac{(1+z_2)^{L(0)}}{z_2^{1-i}}\otimes \Id_V)\circ(\Id_V\otimes \Yup (z_2))\circ\\
    &&~~~~(\frac{(1+z_1)^{L(0)}}{z_1^{1+i}}\otimes\Id_V)\circ \Yup (z_1)v\\
    &&+\Res_{z_1,z_2}(\tau\otimes\Id_V)\circ\sum_{i\geq0}(\tbinom{-1}{i}(-1)^{-1-i}\Id_V\otimes \frac{(1+z_1)^{L(0)}}{z_1^{-i}}\otimes \Id_V)\circ\\
    &&~~~~(\Id_V\otimes \Yup (z_1))\circ(\frac{(1+z_2)^{L(0)}}{z_2^{2+i}}\otimes \Id_V)\circ \Yup (z_2)v\\
    &=&\Res_{z_1,z_2}(\Id_V\otimes \frac{(1+z_2)^{L(0)}}{z_2}\otimes \Id_V)\circ(\Id_V\otimes \Yup (z_2))\circ(\frac{(1+z_1)^{L(0)}}{z_1}\otimes \Id_V)\circ \Yup (z_1)v\\
    &=&(\Id_V\otimes \Res_{z_2}\frac{(1+z_2)^{L(0)}}{z_2}\otimes\Id_V)\circ (\Id_V\otimes\Yup (z_2))\circ\Delta(v)\\
    &=&(\Id_V\otimes \Delta)\circ \Delta(v).
  \end{eqnarray*}
The seventh identity follows from Lemma \ref{CV2}. Others are trivial.

This completes the proof.                             $\hfill\Box$

Now, we are ready to state our first theorem.
\begin{Theorem}\label{Thm1}
Let $(V,\Yup (z),c,\rho)$ be a graded vertex operator coalgebra. $C(V)$ and $\Delta$ are defined in Lemma \ref{CV0} and Lemma \ref{CV2}. Still denote $c|_{C(V)}:C(V)\rightarrow \mathbb{C}$ by $c$. If $\dim C(V)<\infty,$ then $(C(V),\Delta,c)$ is a coassociative coalgebra.
\end{Theorem}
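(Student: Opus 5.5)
The plan is to assemble the theorem directly from the three lemmas already proved, checking the axioms of Definition \ref{DC2.1} one at a time for the triple $(C(V),\Delta,c)$.

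\textbf{Step 1: $\Delta$ is a well-defined map $C(V)\to C(V)\otimes C(V)$.} For $v\in C(V)$, the expression
$$\Delta(v)=\Res_z\Bigl(\frac{(1+z)^{L(0)}}{z}\otimes\Id_V\Bigr)\circ\Yup(z)v$$
is a finite sum in $V\otimes V$. Indeed, for homogeneous $v$, condition (i) of Definition \ref{Def2.17} gives $\Delta_k(v)=0$ for $k\ll0$. Expanding $(1+z)^{L(0)}$ on each homogeneous tensor factor produces only nonnegative powers of $z$. Hence the coefficient of $z^{-1}$ involves only finitely many $\Delta_k(v)$, namely those with $k\ge -1$ bounded above by the lower truncation. (I should double check this direction: only $\Delta_k$ with $k\ge-1$ can contribute, and each $\Delta_k(v)$ is finite.) Linearity of $\Delta$ is clear, and Lemma \ref{CV3} shows that the image of $C(V)$ lies in $C(V)\otimes C(V)$.

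\textbf{Step 2: coassociativity.} Lemma \ref{CV3} gives $(\Id_V\otimes\Delta)\circ\Delta=(\Delta\otimes\Id_V)\circ\Delta$ on $C(V)$. Because $\Delta(C(V))\subseteq C(V)\otimes C(V)$, both sides only apply $\Delta$ to elements of $C(V)$. So this identity is exactly $(\Id_{C(V)}\otimes\Delta)\circ\Delta=(\Delta\otimes\Id_{C(V)})\circ\Delta$ as maps into $C(V)^{\otimes 3}$.

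\textbf{Step 3: counit.} Restricting $c$ gives a linear map $C(V)\to\mathbb{C}$. Lemma \ref{CV0} gives $(\Id\otimes c)\circ\Delta(v)=v=(c\otimes\Id)\circ\Delta(v)$ for $v\in C(V)$. Since $\Delta(v)\in C(V)\otimes C(V)$, only $c|_{C(V)}$ is used, so these are precisely the counit identities.

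The finite-dimensionality hypothesis is not needed for the axioms themselves. It is kept because the sequel dualizes $C(V)$, and it also guarantees that $C(V)\otimes C(V)$ behaves as an ordinary subspace of $V\otimes V$.

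The only delicate point is in Step 2: one must make sure that the intersection argument $C(V)\otimes C(V)=(V\otimes C(V))\cap(C(V)\otimes V)$ used in Lemma \ref{CV3} is valid. It is, by linear algebra: choose a basis of $V$ extending a basis of $C(V)$. This legitimizes viewing $\Delta$ as landing in $C(V)\otimes C(V)$, after which the theorem follows from Lemmas \ref{CV0} and \ref{CV3}.
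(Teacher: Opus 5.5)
Your argument is the paper's own: the paper proves the theorem in two lines, taking well-definedness ($\Delta(C(V))\subseteq C(V)\otimes C(V)$) and coassociativity from Lemma \ref{CV3}, and the counit identities from Lemma \ref{CV0}. Your Steps 2 and 3 do exactly this. The issue is the finiteness argument you add in Step 1, which is wrong as stated.

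Axiom (i) of Definition \ref{Def2.17} says $\Delta_k(v)=0$ for $k\ll 0$. That is a \emph{lower} bound on $k$, and nothing in the axioms forces $\Delta_k(v)=0$ for $k\gg 0$. For instance, for $V=\mathcal{V}'$ one has $(\Delta_k(f),u\otimes w)=(f,u_kw)$, which is typically nonzero for infinitely many $k$.

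Now take $v\in V_t$ and a component of $\Delta_k(v)$ lying in $V_n\otimes V_s$, where $n=t+k+1-s$. It enters the residue with coefficient $\tbinom{n}{k+1}$, which vanishes only when $0\le n<k+1$. So every $k\ge -1$ can contribute, provided $s\le t$. The binomial cutoff bounds the weight of the second tensor factor, not the first. Hence your reasoning does not show that $\Delta(v)$ is a finite sum in $V\otimes V$.

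The theorem still follows once you delete this justification and take well-definedness from Lemma \ref{CV3}, as the paper does. If you want a genuine finiteness argument, it must use $v\in C(V)$, not just lower truncation. It plausibly also needs $\dim C(V)<\infty$, which places $C(V)$ inside some $\bigoplus_{i\le N}V_i$. For the same reason, your closing remark that the finite-dimensionality hypothesis plays no role in the axioms is unsupported.

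Separately, the identity $C(V)\otimes C(V)=(V\otimes C(V))\cap(C(V)\otimes V)$ holds for any subspace. Finite-dimensionality is irrelevant to that point.
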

\proof By Lemma \ref{CV3}, $\Delta$ is well-defined and satisfies coassociative identity. By Lemma \ref{CV0}, $\Delta$ and $c$ satisfy counit identities.                           $\hfill\Box$

\begin{Proposition}\label{Prop}
For $v\in C(V)$, we have $L(1)v+L(0)v=0.$
\end{Proposition}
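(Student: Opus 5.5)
The plan is to apply the map $\Id_V\otimes c$ to the defining relation of $C(V)$ and read off the coefficient of $z$. The skew-symmetry of Lemma \ref{L2.1}, together with the left counit identity (\ref{LU3.1}), will identify $(\Id_V\otimes c)\circ\Yup(z)v$ explicitly.

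First I express $(\Id_V\otimes c)\circ\Yup(z)$ through $L(1)$. Since $(\Id_V\otimes c)=(c\otimes\Id_V)\circ\tau$ on $V\otimes V$, the graded vertex operator coalgebra form of Lemma \ref{L2.1}, $\tau\circ\Yup(z)=\Yup(-z)\circ\exp^{zL(1)}$, gives for any $v\in V$
\begin{eqnarray*}
(\Id_V\otimes c)\circ\Yup(z)v=(c\otimes\Id_V)\circ\Yup(-z)\,\exp^{zL(1)}v=\exp^{zL(1)}v .
\end{eqnarray*}
The last equality is (\ref{LU3.1}) applied coefficientwise to each $L(1)^nv$. The sum $\sum_n z^nL(1)^nv/n!$ is finite because $L(1)$ lowers degree by one and $V$ is lower truncated. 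This is consistent with (\ref{CI3.2}) and (\ref{CI3.3}).

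Next, let $v$ satisfy $\Res_z(\frac{(1+z)^{L(0)}}{z^2}\otimes\Id_V)\circ\Yup(z)v=0$. The operator $(1+z)^{L(0)}z^{-2}$ acts only on the first tensor factor, so it commutes with $\Id_V\otimes c$, and so does $\Res_z$. Applying $\Id_V\otimes c$ therefore yields
\begin{eqnarray*}
0=\Res_z\frac{(1+z)^{L(0)}}{z^2}\exp^{zL(1)}v .
\end{eqnarray*}
Here $(1+z)^{L(0)}$ acts on each homogeneous component $L(1)^nv$ after $\exp^{zL(1)}$ has been applied. The residue of $z^{-2}F(z)$ is the coefficient of $z^1$ in $F(z)=(1+z)^{L(0)}\exp^{zL(1)}v$.

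Expanding $(1+z)^{L(0)}=1+zL(0)+O(z^2)$ and $\exp^{zL(1)}v=v+zL(1)v+O(z^2)$, the coefficient of $z$ is $L(0)v+L(1)v$. This is clear for homogeneous $v$ of weight $t$, where it reads $tv+L(1)v$. For a general $v$ I would decompose it into homogeneous components, which is harmless because every step above is linear. Since $C(V)$ is the span of such $v$, linearity gives $L(1)v+L(0)v=0$ on all of $C(V)$.

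The main point needing care is the first step. I must check that the sign and ordering conventions in Lemma \ref{L2.1} give exactly $(\Id_V\otimes c)\circ\tau^{-1}$-compatibility, and that $\exp^{zL(1)}$ appears rather than $\exp^{-zL(1)}$. If the sign turned out reversed, the conclusion would instead read $L(0)v-L(1)v=0$. To settle the sign I would cross-check against (\ref{L(1D)3.6}): applying $\Id_V\otimes c$ gives $\frac{d}{dz}(\Id_V\otimes c)\circ\Yup(z)v$, which must match the $z$-derivative of $\exp^{zL(1)}v$.
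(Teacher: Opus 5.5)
Your proof is correct, and the sign you guessed is the right one. It differs from the paper's mainly in the tool used for the first step.

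You get the closed form $(\Id_V\otimes c)\circ\Yup(z)v=\exp^{zL(1)}v$ from the skew-symmetry of Lemma \ref{L2.1} together with the left counit identity (\ref{LU3.1}). You then read off the $z^{1}$-coefficient of $(1+z)^{L(0)}\exp^{zL(1)}v$.

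The paper never uses skew-symmetry. It expands the residue directly and uses (\ref{CI3.2}) to discard every $\Delta_k$ with $k\geq 0$ after applying $\Id_V\otimes c$. Only $(\Id_V\otimes c)\circ\Delta_{-2}(v)+(L(0)\otimes c)\circ\Delta_{-1}(v)$ survives. It identifies $(\Id_V\otimes c)\circ\Delta_{-1}(v)=v$ by (\ref{CI3.3}), and rewrites $\Delta_{-2}=(L(1)\otimes\Id_V)\circ\Delta_{-1}$ using (\ref{L(1D)3.6}).

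That last step is your proposed sign check, cut off at first order. Carried to all orders, it gives the recursion $(n+1)F_{n+1}=L(1)F_n$, $F_0=v$, for the coefficients of $F(z)=(\Id_V\otimes c)\circ\Yup(z)v$. Hence $F(z)=\exp^{zL(1)}v$ follows with no appeal to Lemma \ref{L2.1}, and your sign worry disappears.

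Your route yields the full generating function. This shows clearly that the proposition is the first-order part of the defining relation. The cost is relying on a cited result whose conventions you had to verify separately. The paper's route uses only the defining axioms and only the two coefficients it actually needs.
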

\proof First, we know $\Res_z(\Id\otimes c)\circ(\frac{(1+z)^{L(0)}}{z^2}\otimes \Id)\circ\Yup(z)v=0.$ On the other hand,
  \begin{eqnarray*}
  &&\Res_z(\Id\otimes c)\circ(\frac{(1+z)^{L(0)}}{z^2}\otimes \Id)\circ\Yup(z)v\\
  &=&\Res_z\sum_{i\geq0,k\in\mathbb{Z}}\tbinom{L(0)}{i}\otimes c\circ\Delta_k(v)z^{i-2-k-1}\\
  &=&(\Id\otimes c)\circ\Delta_{-2}(v)+(L(0)\otimes c)\circ\Delta_{-1}(v)\\
  &=&(\Id\otimes c)\circ(L(1)\otimes\Id)\circ\Delta_{-1}(v)+(L(0)\otimes c)\circ\Delta_{-1}(v)\\
  &=&(L(1)+L(0))\circ(\Id\otimes c)\circ\Delta_{-1}(v)\\
  &=&L(1)v+L(0)v.
  \end{eqnarray*}
The second and last identity follows from Definition \ref{Def2.17} (ii). The third identity follows from $L(1)$-derivation properties.

Hence we get $L(1)v+L(0)v=0.$                           $\hfill\Box$

\section{$C(V)$-comodules}
Now we construct $C(V)$-comodules from admissible $V$-comodules.

Let $(V,\Yup (z),c,\rho)$ be a graded vertex operator coalgebra and $(\mathcal {M},\Yup_\mathcal {M}(z))$ an admissible $V$-comodule. Let $(C(V),\Delta,c)$ be defined in section 3. Suppose $\dim C(V)<\infty.$
\begin{Lemma}\label{LM4.1}
Let $m\in M_0$. For all $i<-1$, we have $$\Res_z(z^{L(0)+i}\otimes \Id_\mathcal {M})\circ \Yup_\mathcal {M}(z)m=0.$$
\end{Lemma}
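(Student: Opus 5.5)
The statement is a pure degree count, so I will prove it by expanding $\Yup_\mathcal {M}(z)m$ in homogeneous components as in Remark \ref{Rmk3.22} (i) and checking which terms can contribute to the residue.

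By linearity it suffices to treat $m\in M_0$; here $t=0$. We write
$$\Yup_\mathcal {M}(z)m=\sum_{k\in\mathbb{Z}}\sum\sum_{s\in\mathbb{N}}m'_{k+1-s}\otimes m''_s z^{-k-1},$$
with $m'_{k+1-s}\in V_{k+1-s}$ and $m''_s\in M_s$. The operator $z^{L(0)}$ acts on the first factor $m'_{k+1-s}$ by multiplication by $z^{k+1-s}$. Hence a typical term of $(z^{L(0)+i}\otimes\Id_\mathcal {M})\circ\Yup_\mathcal {M}(z)m$ is
$$m'_{k+1-s}\otimes m''_s\, z^{k+1-s+i-k-1}=m'_{k+1-s}\otimes m''_s\, z^{i-s}.$$

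Taking $\Res_z$ keeps exactly the terms with $i-s=-1$, that is, $s=i+1$. The hypothesis $i<-1$ gives $s<0$. But $\mathcal {M}$ is $\mathbb{N}$-graded, so $M_s=0$ for $s<0$, and every such term vanishes. Summing over $k$ and $s$ shows the residue is $0$.

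The only point needing care is that the sum over $k$ is infinite. I will check that the expression is a well-defined formal series and that $\Res_z$ can be applied termwise. For each fixed $s$ the exponent of $z$ is the constant $i-s$, so infinitely many $k$ could contribute to the same power of $z$. This is the step where I expect the main obstacle.

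To handle it, I will interpret $z^{L(0)}$ exactly as in the definition of $C(V)$ and in Lemma \ref{CV0}, where the same kind of expression already appears. The argument then needs no finiteness: the coefficient of $z^{-1}$ is assembled solely from components with $s=i+1<0$, and each of these lies in $V\otimes M_s=0$. So the residue is zero term by term.
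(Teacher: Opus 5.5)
Your proof is correct and is essentially the paper's own argument: expand $\Yup_\mathcal{M}(z)m$ via Remark~\ref{Rmk3.22}, observe that every term picks up the factor $z^{i-s}$, so the residue can only see $s=i+1<0$, where $M_s=0$. Your extra remark about the infinite sum over $k$ is a harmless refinement the paper does not address. The coefficient of $z^{-1}$ is built only from terms lying in $V\otimes M_{i+1}=0$, so it vanishes regardless of how that sum is interpreted.
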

\proof By assumption, we have
  \begin{eqnarray*}
    &&\Res_z(z^{L(0)+i}\otimes \Id_\mathcal {M})\circ \Yup_\mathcal {M}(z)m\\
    &=&\Res_z(z^{L(0)+i}\otimes \Id_\mathcal {M})\circ\sum_{k\in\mathbb{Z}}\sum\sum_{s\in\mathbb{N}}m'_{k+1-s}\otimes m''_{s}z^{-k-1}\\
    &=&\Res_z\sum_{k\in\mathbb{Z}}\sum\sum_{s\in\mathbb{N}}m'_{k+1-s}\otimes m''_{s}z^{-k-1+k+1-s+i}\\
    &=&0.
  \end{eqnarray*}
Last identity follows from the conditions of $s$ and $i$.                      $\hfill\Box$

We still denote $\Id_\mathcal {M}$ by the restriction of $\Id_\mathcal {M}$ on $M_0$.
\begin{Theorem}\label{CM0}
For any $m\in M_0$, define $$\Delta_{M_0}(m)=\Res_z(z^{L(0)-1}\otimes \Id_\mathcal {M})\circ \Yup_\mathcal {M}(z)m.$$Then $\Delta_{M_0}(m)\in C(V)\otimes M_0$, and $(M_0,\Delta_{M_0})$ gives a $C(V)$-comodule structure on $M_0.$
\end{Theorem}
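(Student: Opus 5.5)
First I will make $\Delta_{M_0}$ concrete. Take $m\in M_0$ and write, as in Remark \ref{Rmk3.22}, $\Delta_{\mathcal{M},k}(m)=\sum\sum_{s\in\mathbb{N}}m'_{k+1-s}\otimes m''_s$. The factor $z^{L(0)-1}$ multiplies the $z^{-k-1}$ term by $z^{k+1-s-1}$, so taking the residue keeps exactly the terms with $s=0$. Hence
$$\Delta_{M_0}(m)=\sum_k\sum m'_{k+1}\otimes m''_0\in V\otimes M_0,$$
and the sum is finite because $\Delta_{\mathcal{M},k}(m)=0$ for $k\ll 0$ and weights are bounded below. The counit identity follows at once from (\ref{LUM3.1}), since $c$ kills every $m'_j$ with $j\neq 0$. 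So $(c\otimes\Id)\circ\Delta_{M_0}(m)=(c\otimes\Id)\circ\Delta_{\mathcal{M},-1}(m)=m$.

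Second, I will show that the first tensor factor lies in $C(V)$. I apply $\Res_{z_1}\frac{(1+z_1)^{L(0)}}{z_1^2}\otimes\Id\otimes\Id$ after $\Yup(z_1)\otimes\Id_{\mathcal M}$ to $\Res_{z_2}(z_2^{L(0)-1}\otimes\Id)\Yup_{\mathcal M}(z_2)m$, and I must show the result vanishes. Following the second half of the proof of Lemma \ref{CV3}, I rewrite $(\Delta_l\otimes\Id)$ acting on $\Yup_{\mathcal M}(z_2)m$ via the $z_0$-coefficient, using Lemma \ref{L(0)delta} to move $L(0)$ powers through $\Delta_l$. I then replace $z_1^{-1}\delta(\frac{z_2+z_0}{z_1})(\Yup(z_0)\otimes\Id)\Yup_{\mathcal M}(z_2)$ by the two left-hand terms of the comodule Jacobi identity (\ref{JIM3.4}).

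In the first resulting term, the operator $\Yup_{\mathcal M}(z_1)$ acts first on $m$. The $z$-dependence collapses to powers $z_1^{L(0)+i}$ with $i<-1$ on that first tensor factor, together with expressions in the $M$-grading. By Lemma \ref{LM4.1}, and since the top degree of $M$ is $0$, this term vanishes. In the second term, $\Yup_{\mathcal M}(z_2)$ acts first, and the same lemma applies after the flip $\tau$.

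The main obstacle is the bookkeeping. Unlike in $V$, the weight of $m$ is $0$, so the factor $(1+z)^{L(0)}$ from $C(V)$ has to be matched against the $z^{L(0)}$ from $M_0$. I expect to expand $(z_1-z_2)^{-2}$ binomially, as in Lemma \ref{CV3}, and then use the fact that $s\ge 0$ forces vanishing residues. If the direct computation gets stuck, I will first try using Lemma \ref{CV2}-type vanishing in reverse, through the derivative property (\ref{L(1D)M3.6}).

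Finally, I will prove coassociativity, $(\Delta\otimes\Id)\Delta_{M_0}=(\Id\otimes\Delta_{M_0})\Delta_{M_0}$, by the same Jacobi manipulation used in the coassociativity part of Lemma \ref{CV3}, with $\frac{(1+z_1)^{L(0)}}{z_1}$ in place of the $z_1^{-2}$ factor. The term in which $\Yup_{\mathcal M}(z_2)$ acts first and $\Yup_{\mathcal M}(z_1)$ acts second reproduces $(\Id\otimes\Delta_{M_0})\Delta_{M_0}(m)$. All other binomial terms are killed, either by Lemma \ref{LM4.1} or because the intermediate vector is restricted to $M_0$.
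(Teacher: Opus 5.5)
\textbf{The finiteness step is a genuine gap.} Your route is the paper's route: extract the $s=0$ part, get the counit from (\ref{LUM3.1}), and use the Jacobi identity plus Lemma \ref{LM4.1} both for membership in $C(V)$ and for coassociativity. The trouble is the step you settle in one clause, the finiteness of $\Delta_{M_0}(m)$; the paper's proof skips it silently too.

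The residue collects the $V_{k+1}\otimes M_0$-component of $\Delta_{\mathcal M,k}(m)$ for \emph{every} $k$. Your two reasons ($\Delta_{\mathcal M,k}(m)=0$ for $k\ll0$, and weights bounded below) only bound $k$ from below. Dually, pair that component with $f\otimes m^*$, where $f\in V'_{k+1}$ and $m^*\in M_0^*$. You get $(f_{\wt f-1}m^*,m)$, a matrix coefficient of the zero mode $o(f)$ on the top level of the $V'$-module $\mathcal M'$, and nothing forces this to vanish for large $\wt f$. So the Jacobi computation only shows that the (possibly infinite) first tensor factor, viewed in $\prod_nV_n=(V')^*$, is annihilated by $O(V')$. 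Since $\Res_{z_1}\frac{(1+z_1)^{L(0)}}{z_1^2}\Yup(z_1)$ is not homogeneous, you cannot conclude component by component, and $C(V)$ contains only \emph{finite} sums.

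\textbf{Counterexample under the stated hypotheses.} Take $V=M(1)'$, the dual of the rank-one Heisenberg vertex operator algebra, and $\mathcal M=M(1,\lambda)'$ with $\lambda\neq0$ and $m=(e^\lambda)^*$. Then $o(h(-1)^n\1)e^\lambda=\lambda^ne^\lambda$. Hence $\Delta_{M_0}(m)$ has a nonzero component in $V_n\otimes M_0$ for every $n\geq0$, so it is not an element of $V\otimes M_0$ at all. On the other hand, $h(-k)b\equiv(-1)^{k-1}h(-1)b$ modulo $O(M(1))$. So the image of $\bigoplus_{n>N}M(1)_n$ in $A(M(1))\cong\mathbb{C}[x]$ is $x\mathbb{C}[x]$, and $C(V)=\mathbb{C}\1^*$ is one-dimensional. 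Thus $\dim C(V)<\infty$ holds, yet the only $C(V)$-comodule structure on $M_0$ is $m\mapsto\1^*\otimes m$, which is not $\Delta_{M_0}$.

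An extra hypothesis is therefore needed, for example that the $k$-sum is finite, equivalently that $o(f)$ vanishes on $M_0^*$ for all $f$ of large weight. Granted that, your remaining steps go through.

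\textbf{A smaller slip in coassociativity.} The surviving term is the untwisted one in which $\Yup_{\mathcal M}(z_1)$ acts first (its $i=0$ part). The $\tau$-term, where $\Yup_{\mathcal M}(z_2)$ acts first, carries $z_2^{L(0)-2-i}$ on $\Yup_{\mathcal M}(z_2)m$ and is killed entirely by Lemma \ref{LM4.1}.
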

\proof This is similar to the proof of Lemma \ref{CV3}.

To show $\Delta_{M_0}(m)\in C(V)\otimes M_0$, it is enough to show $\Delta_{M_0}(m)\in C(V)\otimes \mathcal {M}\bigcap V\otimes M_0.$ Since $m\in M_0$, by definition and Remark \ref{Rmk3.22}, we have
  \begin{eqnarray*}
    \Delta_{M_0}(m)&=&\Res_z(z^{L(0)-1}\otimes \Id_\mathcal {M})\circ \Yup_\mathcal {M}(z)m\\
    &=&\Res_z(z^{L(0)-1}\otimes \Id_\mathcal {M})\circ\sum_{k\in\mathbb{Z}}\sum\sum_{s\in\mathbb{N}}m'_{k+1-s}\otimes m''_{s}z^{-k-1}\\
    &=&\sum_{k\in\mathbb{Z}}\sum m'_{k+1}\otimes m''_{0}\in V\otimes M_0.
  \end{eqnarray*}
Furthermore, we have
  \begin{eqnarray*}
    &&(\Res_{z_1}\frac{(1+z_1)^{L(0)}}{z_1^2}\otimes\Id_V \otimes\Id_\mathcal {M})\circ (\Yup (z_1)\otimes\Id_\mathcal {M})\circ\Delta_{M_0}(m)\\
    &=&\Res_{z_1,z_2}(\sum_{i\geq0}(-1)^i\tbinom{-2}{i}(z_1^{L(0)-2-i}\otimes z_2^{L(0)+i}\otimes \Id_\mathcal {M})\circ(\Id_V\otimes \Yup_\mathcal {M} (z_2))\circ \Yup_\mathcal {M} (z_1)m\\
    &&+\sum_{i\geq0,k,l}\tbinom{-2}{i}(-1)^{-2-i}(\tau\otimes\Id_\mathcal {M})\circ(z_2^{L(0)-2-i}\otimes z_1^{L(0)+i}\otimes \Id_\mathcal {M})\circ\\
    &&~~~~~~(\Id_V\otimes \Yup_\mathcal {M} (z_1))\circ \Yup_\mathcal {M} (z_2)m\\
    &=&\Res_{z_1,z_2}(\sum_{i\geq0}(-1)^i\tbinom{-2}{i}(\Id_V\otimes z_2^{L(0)+i}\otimes \Id_\mathcal {M})\circ(\Id_V\otimes \Yup_\mathcal {M} (z_2))\circ\\
    &&~~~~~~(z_1^{L(0)-2-i}\otimes \Id_\mathcal {M})\circ \Yup_\mathcal {M} (z_1)m\\
    &&+\sum_{i\geq0,k,l}\tbinom{-2}{i}(-1)^{-2-i}(\tau\otimes\Id_\mathcal {M})\circ(\Id_V\otimes z_1^{L(0)+i}\otimes \Id_\mathcal {M})\circ(\Id_V\otimes \Yup_\mathcal {M} (z_1))\circ\\
    &&~~~~~~(z_2^{L(0)-2-i}\otimes\Id_\mathcal {M})\circ \Yup_\mathcal {M} (z_2)m\\
    &=&0.
  \end{eqnarray*}
Following the proof of Lemma \ref{CV3}, we can get the first identity. The second identity follows from the composition properties of linear maps. Last identity follows from Lemma \ref{LM4.1}. This implies $\Delta_{M_0}(m)\in C(V)\otimes \mathcal {M}.$

Now we prove the comodule structure. Let $m\in M_0$, we have
  \begin{eqnarray*}
    (c\otimes \Id_\mathcal {M})\circ\Delta_{M_0}(m)&=&(c\otimes \Id_\mathcal {M})\circ\Res_z(z^{L(0)-1}\otimes \Id_\mathcal {M})\circ \Yup_\mathcal {M}(z)m\\
    &=&\Res_zz^{-1-s}\sum_{k\in\mathbb{Z}}\sum\sum_{s\in\mathbb{N}}c(m'_{k+1-s})\otimes m''_{s}z^{-k-1}\\
    &=&m.
  \end{eqnarray*}
Last identity follows from Remark \ref{Rmk3.22}.

The proof of coassociative identity is similar to the proof of Lemma \ref{CV3}. For any $m\in M_0$, we have
\begin{eqnarray*}
    &&(\Delta\otimes \Id_{M_0})\circ \Delta_{M_0}(m)\\
    &=&(\Res_{z_1}\frac{(1+z_1)^{L(0)}}{z_1}\otimes\Id_V \otimes\Id_\mathcal {M})\circ (\Yup (z_1)\otimes\Id_\mathcal {M})\circ\Res_{z_2}(z_2^{L(0)-1}\otimes \Id_\mathcal {M})\circ \Yup_\mathcal {M}(z_2)m\\
    &=&\Res_{z_1,z_2}\sum_{i\geq0}(\tbinom{-1}{i}(-1)^iz_1^{L(0)-1-i}\otimes z_2^{L(0)-1+i}\otimes \Id_\mathcal {M})\circ(\Id_V\otimes \Yup_\mathcal {M} (z_2))\circ \Yup_\mathcal {M} (z_1)m\\
    &&+\Res_{z_1,z_2}(\tau\otimes\Id_\mathcal {M})\circ\sum_{i\geq0}(\tbinom{-1}{i}(-1)^{-1-i}z_2^{L(0)-2-i}\otimes z_1^{L(0)-i}\otimes \Id_\mathcal {M})\circ\\
    &&~~~~(\Id_V\otimes \Yup_\mathcal {M} (z_1))\circ \Yup_\mathcal {M} (z_2)m\\
    &=&\Res_{z_1,z_2}\sum_{i\geq0}(\tbinom{-1}{i}(-1)^i\Id_V\otimes z_2^{L(0)-1+i}\otimes \Id_\mathcal {M})\circ(\Id_V\otimes \Yup_\mathcal {M} (z_2))\circ\\
    &&~~~~(z_1^{L(0)-1-i}\otimes \Id_\mathcal {M})\circ \Yup_\mathcal {M} (z_1)m\\
    &&+\Res_{z_1,z_2}(\tau\otimes\Id_\mathcal {M})\circ\sum_{i\geq0}(\tbinom{-1}{i}(-1)^{-1-i}\Id_V\otimes z_1^{L(0)-i}\otimes \Id_\mathcal {M})\circ\\
    &&~~~~(\Id_V\otimes \Yup_\mathcal {M} (z_1))\circ(z_2^{L(0)-2-i}\otimes \Id_\mathcal {M})\circ \Yup_\mathcal {M} (z_2)m\\
    &=&\Res_{z_1,z_2}(\Id_V\otimes z_2^{L(0)-1}\otimes \Id_\mathcal {M})\circ(\Id_V\otimes \Yup_\mathcal {M} (z_2))\circ(z_1^{L(0)-1}\otimes \Id_\mathcal {M})\circ \Yup_\mathcal {M} (z_1)m\\
    &=&(\Id_V\otimes \Res_{z_2}z_2^{L(0)-1}\otimes\Id_\mathcal {M})\circ (\Id_V\otimes\Yup_\mathcal {M} (z_2))\circ\Delta_{M_0}(m)\\
    &=&(\Id_V\otimes \Delta_{M_0})\circ \Delta_{M_0}(m).
  \end{eqnarray*}
Following the proof of Lemma \ref{CV3}, we can get the second identity. The third identity follows from the composition properties of linear maps. The fourth identity follows from Lemma \ref{LM4.1}. Others follow from definitions.    \

This completes the proof.                 $\hfill\Box$

\begin{Proposition}\label{P4.3}
Let $(V,\Yup (z),c,\rho)$ be a graded vertex operator coalgebra, $(\mathcal {M}^1,\Yup_{\mathcal {M}^1}(z))$ and $(\mathcal {M}^2,\Yup_{\mathcal {M}^2}(z))$ two admissible $V$-comodules. If $\psi:\mathcal {M}^1\rightarrow \mathcal {M}^2$ is a $V$-comodule homomorphism, then $\psi|_{M^1_0}:M^1_0\rightarrow M^2_0$ is a $C(V)$-comodule homomorphism. Thus we get a functor $\Omega$ from admissible $V$-comodules category to $C(V)$-comodules category defined by $\Omega(\mathcal {M})=M_0$.
\end{Proposition}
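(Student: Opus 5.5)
The plan is to check two things about $\psi$. First, $\psi$ sends $M^1_0$ into $M^2_0$. Second, the restriction intertwines the coactions $\Delta_{M^1_0}$ and $\Delta_{M^2_0}$ of Theorem \ref{CM0}. After that, functoriality is formal.

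\textbf{Step 1: $\psi(M^1_0)\subseteq M^2_0$.} The definition of $V$-comodule homomorphism only asks $\psi$ to intertwine $\Yup_{\mathcal M^1}(z)$ and $\Yup_{\mathcal M^2}(z)$; it does not mention the grading. So I would first note that we take homomorphisms of admissible comodules to be graded, i.e.\ $\psi(M^1_t)\subseteq M^2_t$. If one prefers not to assume this, it can be derived from the $L(0)$-action. Applying $(\rho\otimes\Id)$ to the intertwining relation gives
$$\psi\circ L(k)=L(k)\circ\psi$$
for all $k$. If $L(0)$ acts on $M_t$ by a scalar depending on $t$, then $\psi$ preserves the grading up to the usual shift ambiguity; I regard this as a convention issue rather than a real obstacle.

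\textbf{Step 2: compatibility with the coactions.} Take $m\in M^1_0$ and compute:
\begin{eqnarray*}
(\Id_V\otimes\psi)\circ\Delta_{M^1_0}(m)&=&(\Id_V\otimes\psi)\circ\Res_z(z^{L(0)-1}\otimes\Id_{\mathcal M^1})\circ\Yup_{\mathcal M^1}(z)m\\
&=&\Res_z(z^{L(0)-1}\otimes\Id_{\mathcal M^2})\circ(\Id_V\otimes\psi)\circ\Yup_{\mathcal M^1}(z)m\\
&=&\Res_z(z^{L(0)-1}\otimes\Id_{\mathcal M^2})\circ\Yup_{\mathcal M^2}(z)\psi(m)\\
&=&\Delta_{M^2_0}(\psi(m)).
\end{eqnarray*}
The second equality holds because $z^{L(0)-1}$ acts only on the first tensor factor while $\psi$ acts only on the second, so the two operators commute. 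The third equality is the homomorphism property. The last equality uses Step 1, namely $\psi(m)\in M^2_0$.

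By Theorem \ref{CM0}, both sides lie in $C(V)\otimes M^2_0$. Hence $\psi|_{M^1_0}$ is a $C(V)$-comodule homomorphism.

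\textbf{Step 3: functoriality.} Define $\Omega(\mathcal M)=M_0$ and $\Omega(\psi)=\psi|_{M_0}$. Restriction clearly preserves identities and compositions, since $(\psi_2\circ\psi_1)|_{M_0}=\psi_2|\circ\psi_1|$. Therefore $\Omega$ is a functor.

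The main obstacle is only Step 1, the grading compatibility. Everything else is a direct commutation argument.
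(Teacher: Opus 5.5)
Your Step 2 is the paper's proof: the same chain of equalities that moves $\Id_V\otimes\psi$ past $\Res_z(z^{L(0)-1}\otimes\Id)$ and then applies the intertwining relation. Your Step 1 makes explicit the assumption $\psi(M^1_0)\subseteq M^2_0$, which the paper uses silently in its last equality, and requiring homomorphisms to be graded is the right fix; the alternative derivation from $L(0)$ does not work in general, since the identity map from $\mathcal{M}$ to a degree-shifted copy of itself intertwines the coactions but moves $M_0$ out of degree $0$.
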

\proof By definitions, we have $\Yup_{\mathcal {M}_2}(z)\circ \psi=(\Id_V\otimes \psi)\circ\Yup_{\mathcal {M}_1}(z)$. Hence
  \begin{eqnarray*}
    (\Id_V\otimes \psi)\circ\Delta_{M^1_0}&=&(\Id_V\otimes \psi)\circ\Res_z(z^{L(0)-1}\otimes \Id_{\mathcal {M}_1})\circ \Yup_{\mathcal {M}_1}(z)\\
    &=&\Res_z(z^{L(0)-1}\otimes \Id_{\mathcal {M}_2})\circ(\Id_V\otimes \psi)\circ \Yup_{\mathcal {M}_1}(z)\\
    &=&\Res_z(z^{L(0)-1}\otimes \Id_{\mathcal {M}_2})\circ\Yup_{M^2_0}(z)\circ \psi\\
    &=&\Delta_{M^2_0}\circ \psi.
  \end{eqnarray*}
This completes the proof.                                                       $\hfill\Box$

\section{Dual to Zhu's algebras}
Let $(V,\Yup (z),c,\rho)$ be a graded vertex operator coalgebra, we have a coassociative coalgebra $C(V)$, if $\dim C(V)<\infty$. Hence, we also have an associative algebra $C(V)^*$. From Proposition \ref{VC-VA}, we know $V'$ is a vertex operator algebra. By Zhu's theory, we have an associative algebra $A(V')=V'/O(V')$, where
  \begin{eqnarray*}
   &&O(V')=\Span\{f\circ g=\Res_z\frac{(1+z)^{\wt f}}{z^2}Y(f,z)g| \forall f,g\in V'\},\\
   &&f*g=\Res_z\frac{(1+z)^{\wt f}}{z}Y(f,z)g, \forall f,g\in A(V').
  \end{eqnarray*}
Suppose $\dim A(V')<\infty$, we also have a coassociative coalgebra $A(V')^*.$

\begin{Lemma}\label{Lm5.1}
$v\in C(V)$ if and only if $(f,v)=0$ for all $f\in O(V').$
\end{Lemma}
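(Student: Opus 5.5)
The plan is to dualize the defining condition of $C(V)$ through the pairing of Proposition \ref{VC-VA}(ii), $(Y(f,z)g,v)=(f\otimes g,\Yup(z)v)$. Since $O(V')$ is spanned by the elements $f\circ g$ with $f,g\in V'$, and since $\circ$ is bilinear, it suffices to take $f\in V'_s$ and $g\in V'_r$ homogeneous. Then $v\perp O(V')$ means $(f\circ g,v)=0$ for all such $f,g$. Conversely, $v\in C(V)$ means the vector
$$w(v):=\Res_z\Big(\frac{(1+z)^{L(0)}}{z^2}\otimes \Id_V\Big)\circ\Yup(z)v\in V\otimes V$$
vanishes. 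The first subtlety is that $C(V)$ is defined as a span, not as a kernel. However, the assignment $v\mapsto w(v)$ is linear: the operator $L(0)$ acts diagonally on each homogeneous component, and only finitely many powers of $z$ contribute to the residue. So $C(V)=\ker w$, and we can work with the single linear condition $w(v)=0$.

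The key identity to establish is
$$(f\circ g,v)=(f\otimes g,\,w(v))\qquad\text{for homogeneous } f,g\in V',\ v\in V.$$
To prove it, expand $\frac{(1+z)^{\wt f}}{z^2}=\sum_{i\ge0}\binom{\wt f}{i}z^{i-2}$, so that $(f\circ g,v)=\sum_i\binom{\wt f}{i}(f_{i-2}g,v)$. On the other side, $(f_{i-2}g,v)$ is the coefficient of $z^{1-i}$ in $(f\otimes g,\Yup(z)v)$, which equals $(f\otimes g,\Delta_{i-2}v)$. Now pair $f$ against the first tensor factor: only the component of $\Delta_{i-2}v$ lying in $V_s\otimes V$ survives, where $s=\wt f$, because $V'=\oplus V_s^*$ pairs degreewise. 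On that component $(1+z)^{L(0)}\otimes\Id$ acts as $(1+z)^{s}$. Hence $(f\otimes g,w(v))=\sum_i\binom{s}{i}(f\otimes g,\Delta_{i-2}v)$, which matches the previous expression. Along the way we need that $\wt f$ in $V'$ equals the degree $s$, i.e.\ that $L_{V'}(0)$ is dual to $L(0)$ on $V$. This follows from $\omega_{V'}=\rho|_{V_2}$ and the definition of $L(k)$ via $\rho$, and it is implicit in Proposition \ref{VC-VA}.

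With the identity in hand, both directions follow quickly. If $v\in C(V)$, then $w(v)=0$, so $(f\circ g,v)=0$ for all homogeneous $f,g$, and by linearity $v\perp O(V')$. Conversely, if $v\perp O(V')$, then $(f\otimes g,w(v))=0$ for all homogeneous $f,g\in V'$. The vector $w(v)$ is a finite sum in $\oplus_{a,b}V_a\otimes V_b$, and each $V_a$ is finite dimensional. So the elementary tensors $f\otimes g$ with $f\in V_a^*$, $g\in V_b^*$ separate points of $V_a\otimes V_b$, and therefore $w(v)=0$, i.e.\ $v\in C(V)$.

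The main obstacle is the bookkeeping in the key identity. Specifically, one must justify replacing $L(0)$ acting on the first tensor factor by the scalar $\wt f$ under the pairing, and keep track of the degree shifts: $\Delta_k$ has degree $k+1$, and the residue picks out $z^{-1}$. I would handle this by decomposing $\Delta_{i-2}v$ into its bihomogeneous pieces before pairing, and only then applying $(1+z)^{L(0)}$, which turns the operator into a scalar on each piece.
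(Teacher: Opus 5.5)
Your proposal is correct and follows the paper's proof. The identity $(f\circ g,v)=(f\otimes g,w(v))$ comes from the pairing $(Y(f,z)g,v)=(f\otimes g,\Yup(z)v)$ of Proposition~\ref{VC-VA}(ii), with $L(0)$ on the first factor replaced by $\wt f$; it gives the forward direction, and nondegeneracy of the degreewise pairing gives the converse. You also state two points the paper uses silently: $C(V)=\ker w$, and $\wt f=s$ for $f\in V_s^*$.

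One claim needs correcting. It is not true that only finitely many powers of $z$ contribute to the residue. Each $\Delta_k(v)$ is only required to vanish for $k\ll 0$, so infinitely many terms $\binom{L(0)}{i}\Delta_{i-2}(v)$ can survive. Consequently $w(v)$ is in general not a finite sum in $\bigoplus_{a,b}V_a\otimes V_b$ and must be read in $\prod_{a,b}V_a\otimes V_b$. This already happens for $V$ the dual of the rank-one Heisenberg vertex operator algebra, with $v=(h(-2)\mathbf 1)^*$: there $w(v)$ has a nonzero component in $V_i\otimes V_1$ for every $i\ge 4$.

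Nothing in your argument breaks. Each $(a,b)$-component of $w(v)$ is a single finite expression, linear in $v$. A pairing with $f\otimes g$, where $f\in V_a^*$ and $g\in V_b^*$, sees only that finite-dimensional component. So both your linearity claim and your separation argument go through componentwise.
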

\proof Let $v\in C(V),f\circ g\in O(V')$. By linearity, we can assume $f$ is homogeneous. Now we have
  \begin{eqnarray*}
   (f\circ g,v)&=&(\Res_z\frac{(1+z)^{\wt f}}{z^2}Y(f,z)g,v)\\
   &=&(\Res_zY(\cdot,z)\circ(\frac{(1+z)^{L(0)}}{z^2}\otimes \Id)(f\otimes g),v)\\
   &=&(f\otimes g,\Res_z(\frac{(1+z)^{L(0)}}{z^2}\otimes \Id)\circ\Yup(z)v)=0.
  \end{eqnarray*}

Conversely, if $v\notin C(V)$, we have $\Res_z(\frac{(1+z)^{L(0)}}{z^2}\otimes \Id)\circ\Yup(z)v\in V\otimes V[[z,z^{-1}]]$ is nonzero. Hence there exist $f,g\in V'$ such that $$(f\otimes g,\Res_z(\frac{(1+z)^{L(0)}}{z^2}\otimes \Id)\circ\Yup(z)v)\neq0.$$ Now we have an element $f\circ g\in O(V')$ such that $(f\circ g,v)\neq0.$                                         $\hfill\Box$

\begin{Proposition}\label{Prop5.2A-C}
(i) Suppose $\dim C(V)<\infty$, there is a well-defined surjective homomorphism $\Phi:A(V')\rightarrow C(V)^*$ of algebras.

(ii) Suppose $\dim A(V')<\infty$, there is a well-defined epimorphism $\Psi:C(V)\rightarrow A(V')^*$ of coalgebras.

(iii) If both $C(V)$ and $A(V')$ are finite dimensional, then $\Phi$ and $\Psi$ are isomorphic.
\end{Proposition}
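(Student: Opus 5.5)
The plan is to use the natural pairing between $V'=\oplus_s V_s^*$ and $V$, in which the structures are dual term by term. Lemma \ref{Lm5.1} then identifies $C(V)$ with the annihilator $O(V')^\perp\subseteq V$.

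For (i), define $\tilde\Phi:V'\to C(V)^*$ by restriction, $\tilde\Phi(f)=f|_{C(V)}$. By Lemma \ref{Lm5.1}, $O(V')\subseteq\ker\tilde\Phi$, so $\tilde\Phi$ descends to $\Phi:A(V')\to C(V)^*$.

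Surjectivity is elementary. Since $\dim C(V)<\infty$, the space $C(V)$ lies in a finite sum $\oplus_{s\le N}V_s$, and the graded dual of that sum maps onto $C(V)^*$. So every functional on $C(V)$ extends to some $f\in V'$.

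For multiplicativity, take homogeneous $f,g\in V'$ and $v\in C(V)$. Using Proposition \ref{VC-VA}(ii) and the same manipulation as in the proof of Lemma \ref{Lm5.1}, with $z^{-1}$ in place of $z^{-2}$,
$$(f*g,v)=\Big(f\otimes g,\ \Res_z\big(\tfrac{(1+z)^{L(0)}}{z}\otimes\Id\big)\circ\Yup(z)v\Big)=(f\otimes g,\Delta(v)).$$
Here $(1+z)^{\wt f}$ on the algebra side matches $(1+z)^{L(0)}$ on the first tensor factor. By Lemma \ref{CV3}, $\Delta(v)\in C(V)\otimes C(V)$, so the right-hand side is $\sum\Phi(f)(v')\Phi(g)(v'')$. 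This is exactly the convolution product of Proposition \ref{A-C}(i).

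For the unit, $\Phi(\textbf{1}_{V'})=c|_{C(V)}$ because $\textbf{1}_{V'}=c|_{V_0}$. By Proposition \ref{A-C}(i), $c|_{C(V)}$ is the unit of $C(V)^*$. I also need to check that $c$ vanishes on $V_s$ for $s\ne0$, so that $c$ really is the element $\textbf{1}_{V'}$ of $V'$. This should follow from the grading and Remark \ref{Rmk3.2}.

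For (ii), define $\Psi(v)\in A(V')^*$ by $\Psi(v)(f+O(V'))=(f,v)$. This is well defined by Lemma \ref{Lm5.1}, and linear. Since $\dim A(V')<\infty$, Proposition \ref{A-C}(ii) gives $A(V')^\circ=A(V')^*$ with $\Delta_{A^*}(\phi)(a\otimes b)=\phi(a*b)$. The pairing identity above then gives $(\Psi\otimes\Psi)\Delta(v)=\Delta_{A^*}(\Psi(v))$, and $\epsilon(\Psi(v))=(\textbf{1}_{V'},v)=c(v)$.

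Surjectivity of $\Psi$ is the step I expect to be the main obstacle. My plan is to show $\Psi$ is injective on the dual side instead. Suppose $f\in V'$ satisfies $(f,v)=0$ for all $v\in C(V)=O(V')^\perp$; I want $f\in O(V')$. Then $\Psi^*$ is injective, so $\Psi$ is surjective.

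To prove this double-annihilator statement, I work degree by degree. Both $V_s$ and $V_s^*$ are finite dimensional, and $O(V')$ is spanned by elements whose components can be taken homogeneous. One must check that $O(V')$ is a graded subspace: $f\circ g$ with $f,g$ homogeneous is a finite sum of homogeneous pieces, but each piece need not itself lie in $O(V')$. I would handle this using $\dim A(V')<\infty$. Then $O(V')$ has finite codimension, so for large $N$ the quotient is detected on $\oplus_{s\le N}V_s^*$. Finite-dimensional linear algebra then gives $(O(V')^\perp)^\perp=O(V')$ inside that finite piece. This grading and topology issue is where I expect the care to be needed.

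For (iii), when both spaces are finite dimensional, I compare $\Phi$ with $\Psi^*$. Under the identifications $C(V)^{**}=C(V)$ and $A(V')^{**}=A(V')$, the map $\Phi$ is the transpose of $\Psi$. Injectivity of $\Psi$ follows from (i): if $\Psi(v)=0$ then $(f,v)=0$ for all $f\in V'$, so $v=0$. Combined with the surjectivity from (ii), $\Psi$ is bijective, and hence $\Phi=\Psi^*$ is bijective as well.
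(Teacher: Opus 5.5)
Your part (i) is the paper's argument: restriction $V'\to C(V)^*$, Lemma \ref{Lm5.1} for well-definedness, the pairing identity $(f*g,v)=(f\otimes g,\Delta(v))$, and surjectivity from $C(V)\subseteq\oplus_{s\le N}V_s$. Your extra unit check is correct: for $u\in V_s$ with $s\neq0$, comparing graded components of $(c\otimes\Id)\Delta_{-1}(u)=u=(\Id\otimes c)\Delta_{-1}(u)$ gives $c(u)=0$. Your injectivity argument for $\Psi$ is also correct.

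\textbf{The gap.} It is the double-annihilator step for surjectivity of $\Psi$. Finite codimension of $O(V')$ tells you that $\oplus_{s\le N}V_s^*$ maps onto $A(V')$ for large $N$. It does not tell you that $O(V')$ is closed for the pairing with $V$. Since $O(V')$ need not be graded, a functional on $\oplus_{s\le N}V_s^*$ that kills $O(V')\cap\oplus_{s\le N}V_s^*$ need not kill $O(V')$ once extended by zero. Likewise, a functional on $V'$ killing $O(V')$ need not be supported in finitely many degrees. What you would actually need is $\oplus_{s>N}V_s^*\subseteq O(V')$ for some $N$; given finite codimension, this is equivalent to $(O(V')^\perp)^\perp=O(V')$. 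That is exactly the unproved assertion in the paper's proof of Lemma \ref{Lm5.3}. The paper's own proofs of (ii) (``similar to (i)'') and (iii) (via $\Psi^*=\Phi$) rest on the same unjustified surjectivity.

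\textbf{The step cannot be repaired, because the claim is false.} Take $V=\mathcal V'$ with $\mathcal V=L(\frac12,0)$ the Ising model. By Proposition \ref{VC-VA}, $V'=\mathcal V$, and $A(V')\cong\mathbb C[x]/(x(x-\frac12)(x-\frac1{16}))$ with $x=[\omega]$.
\begin{itemize}
\item From $\omega*u=(L(-2)+2L(-1)+L(0))u$ and $(L(-1)+L(0))u\in O(\mathcal V)$ you get $[L(-2)u]=([\omega]+\mathrm{wt}\,u)*[u]$. Hence $[L(-2)^n\textbf{1}]=\prod_{j=0}^{n-1}([\omega]+2j)$.
\item Let $\chi$ be the character of $A(V')$ with $\chi([\omega])=\frac12$. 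It takes the value $\prod_{j=0}^{n-1}(\frac12+2j)\neq0$ on $L(-2)^n\textbf{1}\in\mathcal V_{2n}$ for every $n$.
\item So $\chi$ is not of the form $f\mapsto(f,v)$ for any $v\in V=\oplus_s\mathcal V_s^*$, i.e.\ $\chi\notin\Psi(C(V))$.
\end{itemize}
Both $C(V)$ and $A(V')$ are finite dimensional here ($\dim C(V)\le3$ because $\Psi$ is injective). Yet $\Psi$ is not surjective and $\Phi=\Psi^*$ is not injective.

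\textbf{What holds in general.} $\Phi$ is surjective with kernel $C(V)^\perp\cap V'$, the weak closure of $O(V')$, and $\Psi$ is injective. The surjectivity in (ii) and the isomorphism in (iii) need an extra hypothesis such as $\oplus_{s>N}V_s^*\subseteq O(V')$ for some $N$.
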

\proof (i) Since $C(V)$ is a sub space of $V$, we have a linear map $V'\rightarrow V^*\rightarrow C(V)^*,$ denote it by $\phi.$ By Lemma \ref{Lm5.1}, we know $O(V')\subseteq\ker\phi.$ Hence, we get a well-defined linear map $\Phi:A(V')\rightarrow C(V)^*.$

Now $\forall f,g\in A(V'), v\in V$, we have
  \begin{eqnarray*}
   &&(\Phi(f*g),v)=(f*g,\Delta(v))=\sum(f,v')(g,v'')\\
   &=&\sum(\Phi(f),v')(\Phi(g),v'')=(\Phi(f)*\Phi(g),v).
  \end{eqnarray*}
Hence, $\Phi$ is a homomorphism.

If $\dim C(V)<\infty$, we know there is $k\in\mathbb{N}$ such that $C(V)\subseteq\oplus_{i\leq k}V_i$. This implies the linear map $\phi:V'\rightarrow C(V)^*$ is surjective. Hence $\Phi$ is epimorphic.

(ii) Since $A(V')^*=\{v\in (V')^*|(f,v)=0,\forall f\in O(V')\}.$ By Lemma \ref{Lm5.1} and $V\rightarrow (V')^*$, there is a well-defined linear map $\Psi:C(V)\rightarrow A(V')^*.$ Similar to the proof of (i), we can prove the rest statements of (ii).

(iii) Suppose $\dim A(V')<\infty$, then we have a monomorphism $\Psi^*:A(V')^{**}=A(V')\rightarrow C(V)^*$ by (ii). Furthermore, $\Psi^*=\Phi$. Hence, they are isomorphisms.                                                                                   $\hfill\Box$

\begin{Lemma}\label{Lm5.3}
Let $V$ be a graded vertex operator coalgebra. If $\dim A(V')<\infty$, then we have $\dim C(V)<\infty.$
\end{Lemma}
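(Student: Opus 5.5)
The plan is to realize $C(V)$ inside the dual of $A(V')$ via the pairing between $V$ and $V'$, and then bound its dimension by that of $A(V')$. Concretely, I would embed $C(V)$ into $A(V')^*$ and conclude $\dim C(V)\le\dim A(V')<\infty$. This is essentially the map $\Psi$ of Proposition \ref{Prop5.2A-C}(ii), but here only its linear injectivity is needed; coassociativity plays no role.

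First, set up the pairing. By Proposition \ref{VC-VA}(ii), $V'=\oplus_s V_s^*$ is a vertex operator algebra. Since each $V_s$ is finite dimensional, the natural map $\iota:V\to (V')^*$, $(\iota(v),f)=(f,v)$, is injective: a nonzero $v$ has a nonzero homogeneous component $v_s$, and some $f\in V_s^*$ satisfies $(f,v)=(f,v_s)\neq0$.

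Second, restrict $\iota$ to $C(V)$. By Lemma \ref{Lm5.1}, $v\in C(V)$ exactly when $(f,v)=0$ for all $f\in O(V')$. Hence $\iota(C(V))$ lies in the annihilator
\[
O(V')^{\perp}=\{\xi\in (V')^*\mid \xi(O(V'))=0\}\cong (V'/O(V'))^*=A(V')^*.
\]
So $\iota$ restricts to an injective linear map $C(V)\to A(V')^*$. By hypothesis $\dim A(V')<\infty$, so $\dim A(V')^*=\dim A(V')<\infty$, and therefore $\dim C(V)\le \dim A(V')<\infty$.

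Third, check one point of independence. Lemma \ref{Lm5.1} and the identification in the second step use only the pairing and the definitions of $C(V)$ and $O(V')$; neither uses the standing assumption $\dim C(V)<\infty$. The statement therefore does not rest on itself. I expect the only real obstacle to be this check that Lemma \ref{Lm5.1} holds without any finiteness hypothesis.

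The converse direction of Lemma \ref{Lm5.1} needs $f,g\in V'$ detecting a nonzero element of $V\otimes V$. This follows from the graded-dual pairing, because $V'\otimes V'$ separates points of $V\otimes V$. Only the forward inclusion $\iota(C(V))\subseteq O(V')^\perp$ is actually needed here, and that inclusion is the direct computation in the proof of Lemma \ref{Lm5.1}.
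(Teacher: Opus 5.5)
Your proof is correct, but it takes a different route from the paper's.

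\textbf{Your route.} You use only two facts. First, the pairing map $\iota:V\to (V')^*$ is injective. Second, by the forward half of Lemma \ref{Lm5.1}, which needs no finiteness hypothesis, $\iota(C(V))\subseteq O(V')^{\perp}\cong A(V')^*$. Together these give the explicit bound $\dim C(V)\le\dim A(V')$ without any appeal to the grading.

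\textbf{The paper's route.} The paper instead tries to place $C(V)$ in low degrees. It asserts that $\dim A(V')<\infty$ forces $V_i^*\subseteq O(V')$ for all $i>k$. It then concludes $C(V)\subseteq\oplus_{i\le k}V_i$, since $O(V')$ annihilates $C(V)$.

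That key assertion does not follow from finite codimension, because $O(V')$ is not a graded subspace. In fact it fails in typical cases. Since $f\circ\mathbf{1}_{V'}=(L(-1)+L(0))f\in O(V')$, one gets $L(-1)^n\omega\equiv(-1)^n(n+1)!\,\omega \pmod{O(V')}$. So if $V'_i\subseteq O(V')$ for all large $i$, then $[\omega]=0$ in $A(V')$. This is false whenever $V'$ has an admissible module whose top level has a nonzero $L(0)$-eigenvalue, for example when $V$ is the graded dual of the Virasoro model $L(\frac12,0)$.

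\textbf{What your route buys.} Your annihilator argument avoids this problem entirely. The bounded-degree conclusion the paper was after is automatic anyway once $C(V)$ is finite-dimensional.

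\textbf{One point to make explicit.} In the forward computation, $\Res_z\bigl(\frac{(1+z)^{L(0)}}{z^2}\otimes\Id\bigr)\circ\Yup(z)v$ can be an infinite sum of components in different degrees. Pairing with a homogeneous $f\otimes g$ picks out a single component, so $(f\circ g,v)$ is still computed correctly.
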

\proof Suppose $\dim A(V')<\infty,$ then there is $k\in\mathbb{N}$, such that $V_i^*\subseteq O(V')$ for all $i>k.$ This means that for any $f\in V_i^*,v\in C(V)$ with $i>k$, we have $(f,v)=0.$ Hence $C(V)\subseteq\oplus_{i\leq k}V_i,$ which implies $\dim C(V)<\infty.$                            $\hfill\Box$

\begin{Theorem}
Let $V$ be a graded vertex operator coalgebra. If $\dim A(V')<\infty$, then $\Phi$ and $\Psi$ are isomorphisms.
\end{Theorem}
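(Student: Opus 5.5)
This theorem combines Lemma \ref{Lm5.3} with Proposition \ref{Prop5.2A-C}. The plan is first to remove the standing finiteness hypothesis on $C(V)$, then to apply part (iii) of the proposition, and finally to double-check the one step of that part which the proposition only asserts.

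\emph{Step 1.} Assume $\dim A(V')<\infty$. Lemma \ref{Lm5.3} gives $\dim C(V)<\infty$. Consequently $(C(V),\Delta,c)$ is a coassociative coalgebra by Theorem \ref{Thm1}, and $C(V)^*$ is an associative algebra by Proposition \ref{A-C}(i). Because $A(V')$ is a finite-dimensional algebra, $A(V')^*=A(V')^\circ$ is a coalgebra by Proposition \ref{A-C}(ii). So both $\Phi$ and $\Psi$ are defined, and we are exactly in the setting of Proposition \ref{Prop5.2A-C}(iii).

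\emph{Step 2.} By Proposition \ref{Prop5.2A-C}(i), $\Phi$ is a surjective algebra homomorphism. By (ii), $\Psi$ is a coalgebra map; it is injective because it is the restriction to $C(V)$ of the canonical embedding $V\hookrightarrow (V')^*$, and each $V_s$ is finite dimensional so $V_s\to (V_s^*)^*$ is injective. Since all spaces are now finite dimensional, the transpose
\[
\Psi^*:A(V')=A(V')^{**}\to C(V)^*
\]
is surjective. Unwinding the pairings, $(\Psi^*(\bar f),v)=(f,v)=(\Phi(\bar f),v)$ for $f\in V'$ and $v\in C(V)$, so $\Psi^*=\Phi$.

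\emph{Step 3.} It remains to show that $\Phi$ is injective. Suppose $\Phi(\bar f)=0$, that is, $f$ vanishes on $C(V)$; we want $f\in O(V')$. Lemma \ref{Lm5.1} shows that $C(V)$ is precisely the annihilator of $O(V')$ inside $V$, so we need the double-annihilator statement $C(V)^{\perp}\cap V'=O(V')$.

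Work degree by degree. Choose $k$ with $V_i^*\subseteq O(V')$ for all $i>k$, as in the proof of Lemma \ref{Lm5.3}. Set
\[
W=\bigoplus_{i\le k}V_i,\qquad W'=\bigoplus_{i\le k}V_i^*.
\]
Then $V'=W'+O(V')$, and $C(V)\subseteq W$. The key point is that the perpendicular of $O(V')\cap W'$ inside $W$ equals $C(V)$. This holds because any $v\in W$ pairs to zero with every element of $O(V')$ not lying in $W'$: the weights do not match, or such elements lie in $\bigoplus_{i>k}V_i^*\subseteq O(V')$, which pairs trivially with $W$. Since $W$ is finite dimensional, finite-dimensional duality gives $C(V)^\perp\cap W'=O(V')\cap W'$. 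Now write $f=f_1+f_2$ with $f_1\in W'$ and $f_2\in O(V')$. Because $f_2$ also vanishes on $C(V)$, so does $f_1$, hence $f_1\in O(V')$ and therefore $f\in O(V')$.

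Thus $\Phi$ is bijective. Then $\Psi^*=\Phi$ is bijective, so $\Psi$ is bijective as well, and both maps are isomorphisms.

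The main obstacle is Step 3: justifying the double-annihilator identity for the infinite-dimensional $V'$. This is where the degree truncation coming from $\dim A(V')<\infty$ is essential, and where the weight-homogeneity of $O(V')$ (up to the filtration) must be handled carefully.
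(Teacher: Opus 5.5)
\textbf{There is a genuine gap in Step 3, and it cannot be closed, because the statement as written is false.} Your Steps 1 and 2 are sound. In particular, $\Psi$ is indeed injective, being the restriction of $V\hookrightarrow (V')^*$, and this already gives $\dim C(V)\le\dim A(V')$.

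\textbf{Where Step 3 fails.} It rests on two claims, and both fail because $O(V')$ is not a graded subspace of $V'$.

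First, there is in general no $k$ with $V_i^*\subseteq O(V')$ for all $i>k$. The paper's proof of Lemma~\ref{Lm5.3} asserts this, but it does not follow from $\dim A(V')<\infty$. For homogeneous $f\in V'$ one has $f\circ\mathbf{1}=L(-1)f+(\mathrm{wt}\,f)f\in O(V')$. Hence $[L(-1)^j\omega]=(-1)^j(j+1)!\,[\omega]$ in $A(V')$. So if $[\omega]\neq0$, none of the homogeneous vectors $L(-1)^j\omega$ lies in $O(V')$. Only the weaker statement $V'=W'+O(V')$ (for $k$ large) is true.

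Second, an element of $O(V')$ outside $W'$ need not pair trivially with $W$. For example, $f\circ\mathbf{1}=L(-1)f+kf$ with $\mathrm{wt}\,f=k$ has the nonzero component $kf\in W'$. Finite-dimensional duality actually gives $C(V)^{\perp}\cap W'=\pi_{W'}(O(V'))$, the projection of $O(V')$ onto $W'$ along $\bigoplus_{i>k}V_i^*$. This is generally strictly larger than $O(V')\cap W'$. So from $f_1\perp C(V)$ you cannot conclude $f_1\in O(V')$.

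\textbf{Why the statement is false.} By Proposition~\ref{Prop}, every $v\in C(V)$ satisfies $L(1)v+L(0)v=0$. Since $L(1)$ has degree $-1$, comparing top-weight components shows that the highest-weight component of a nonzero $v\in C(V)$ has weight $0$. Now take $V=\mathcal{V}'$, where $\mathcal{V}=L(\tfrac12,0)$ is the Ising vertex operator algebra, which is $\mathbb{N}$-graded with $\mathcal{V}_0=\mathbb{C}\mathbf{1}$.
\begin{itemize}
\item Then $C(V)\subseteq V_0$ is at most one-dimensional.
\item But $A(V')=A(\mathcal{V})\cong\mathbb{C}[x]/(x(x-\tfrac12)(x-\tfrac1{16}))$ is three-dimensional.
\item Hence $\Phi$ is not injective and $\Psi$ is not surjective, even though $\dim A(V')<\infty$.
\end{itemize}
The missing elements of $A(V')^*=O(V')^{\perp}\subseteq\prod_s V_s$ are functionals like $f\mapsto$ (the scalar by which $o(f)=f_{\mathrm{wt}f-1}$ acts on the top level of $L(\tfrac12,\tfrac12)$). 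This functional is nonzero on every $L(-1)^j\omega$, so it lies in $\prod_s V_s$ but not in $V=\bigoplus_s V_s$.

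\textbf{Comparison with the paper.} The paper's own proof does not avoid this problem. It obtains injectivity of $\Phi=\Psi^*$ from the surjectivity of $\Psi$ claimed in Proposition~\ref{Prop5.2A-C}(ii). That surjectivity relies on the same false truncation $V_i^*\subseteq O(V')$ for $i>k$.
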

\proof This follows from Proposition \ref{Prop5.2A-C} and Lemma \ref{Lm5.3}.                    $\hfill\Box$

\section{Admissible $V$-comodules from $C(V)$-comodules}
Let $(V,\Yup (z),c,\rho)$ be a graded vertex operator coalgebra, we have a coassociative coalgebra $C(V)$ if $\dim C(V)<\infty$. Let $(M,\Delta_M)$ be a left $C(V)$-comodule. Then, $M^*$ is a left $C(V)^*$-module. By Proposition \ref{Prop5.2A-C}, it is also an $A(V')$-module. By Zhu's theory, we get an admissible $V'$-module $L(M^*)$ satisfying $L(M^*)_0=M^*.$ We give the construction of $L(M^*)$ briefly.

Consider $\widehat{V'}=V'\otimes \C[t,t^{-1}]/D(V'\otimes \C[t,t^{-1}])$, where $D=L(-1)\otimes 1+1\otimes\frac{d}{dt}$. Then $\widehat{V'}$ is a $\Z$-graded Lie algebra with triangular decomposition $$\widehat{V'}=\widehat{V'}(-)\oplus\widehat{V'}(0)\oplus\widehat{V'}(+).$$ Furthermore, there is a well-defined Lie algebra epimorphism $\phi:\widehat{V'}(0)\rightarrow A(V')$. So $M^*$ is a $\widehat{V'}(0)$-module. Regard $M^*$ as trivial $\widehat{V'}(-)$-module. Set $$U(M^*)=\mathcal {U}(\widehat{V'})\otimes_{\mathcal {U}(\widehat{V'}(0)+\widehat{V'}(-))}M^*\cong\mathcal {U}(\widehat{V'}(+))\otimes M^*,$$ and $L(M^*)=U(M^*)/J$, where $$J=\{m^*\in U(M^*)|<m^{**},fm^*>=0,\forall m^{**}\in M^{**},f\in\mathcal {U}(\widehat{V'})\}.$$
Then $(L(M^*),Y_{M^*})$ is an admissible $V'$-module with $Y_{M^*}(f,z)=\sum_kf\otimes t^kz^{-k-1}$, for any $f\in V'$.

From above discussion, we know $L(M^*)'$ is a left $V''=V$-comodule. On the other hand, there is an injection $M\rightarrow M^{**}=L(M^*)_0^*\rightarrow L(M^*)'$. If $\dim M<\infty,$ the first injection is also a bijection. Under above injection, we can regard $M$ as a subspace of $L(M^*)'.$ Define $\mathcal {W}(M)$ to be the sub comodule of $L(M^*)'$ generated by $M$, i.e., the smallest sub comodule containing $M$. Define $\mathcal {L}(M)$ to be the quotient $\mathcal {W}(M)/\mathcal {J}$, where $\mathcal {J}$ is the maximal sub comodule of $\mathcal {W}(M)$ which intersects $M$ trivially.

\begin{Lemma}\cite{Z}
Let $\mathcal {N}$ be an admissible $V'$-module, then $N_0$ is an $A(V')$-module with module structure defined as $\varrho_{N_0}(f\otimes n)=f\otimes t^{\wt f-1}n,$ where $n\in N_0,f\in V'$ is homogeneous and extend to all $f\in V'$ by linearity.
\end{Lemma}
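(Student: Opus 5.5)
This is Zhu's classical result, restated in the present notation; I would follow Zhu's original argument. Write $o(f)=f\otimes t^{\wt f-1}$, which is the weight-zero component $f_{\wt f-1}$ of $Y_{\mathcal {N}}(f,z)$. By the admissibility grading $f_tN_s\subset N_{\wt f+s-t-1}$, the operator $o(f)$ preserves each $N_s$, and in particular maps $N_0$ to itself. So $f\otimes n\mapsto o(f)n$ is a well-defined linear map $V'\otimes N_0\to N_0$. It then suffices to prove three things:
\begin{itemize}
\item $o(f\circ g)$ acts as zero on $N_0$;
\item $o(f*g)=o(f)o(g)$ on $N_0$;
\item $o(\1_{V'})=\Id$.
\end{itemize}
The third is immediate from $Y_{\mathcal {N}}(\1,z)=\Id$, since $\1\otimes t^{-1}$ is the coefficient of $z^{0}$.

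The main tool is the Jacobi identity for $\mathcal {N}$, applied in its residue form. Take homogeneous $f,g$ and $n\in N_0$. Multiply the Jacobi identity by $z_1^{\wt f}z_2^{\wt g-1+s}$ times a suitable power of $z_0$, and take $\Res_{z_0}\Res_{z_1}\Res_{z_2}$. This produces the standard identity
\[
\Res_{z}\frac{(1+z)^{\wt f}}{z^{2+p}}\,Y_{\mathcal {N}}(Y(f,z)g,z_2)\;\longleftrightarrow\;\sum_{i\ge0}\binom{-2-p}{i}(-1)^i\bigl(f_{\wt f-2-p-i}\,g_{\cdot}\;-\;\cdots\bigr),
\]
written entirely in modes. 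For $o(f\circ g)$ (the case $p=0$), restrict to $N_0$. Each term in the expansion contains either $f_k$ with $k>\wt f-1$ acting directly on $n$, which lowers the degree below $0$ and so gives zero, or $g$-modes followed by $f$-modes, arranged so that one lands in negative degree. Everything cancels, giving $o(f\circ g)N_0=0$.

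For the product, use the same expansion with $1/z$ in place of $1/z^2$, so that
\[
o(f*g)=\Res_z\frac{(1+z)^{\wt f}}{z}\,o(Y(f,z)g).
\]
Apply the associator formula and again drop all terms that lower degree on $N_0$. The only surviving term is $f_{\wt f-1}g_{\wt g-1}n=o(f)o(g)n$.

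The main obstacle is the bookkeeping in these two residue computations. One has to check that, with the choice $z_2^{\wt g-1}$ and the expansion of $(z_2+z_0)^{\wt f}$ against $(1+z)^{\wt f}/z^{2}$, every leftover term genuinely involves a mode $f_k$ with $k\ge \wt f$ applied first to an element of $N_0$. I would handle this by tracking degrees term by term. For general $f\in V'$, the result follows by linearity since $O(V')$ is spanned by the elements $f\circ g$. Together these show that $o$ factors through $A(V')=V'/O(V')$ and gives an algebra homomorphism $A(V')\to\End(N_0)$, which is exactly the module structure $\varrho_{N_0}$.
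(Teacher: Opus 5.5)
Your proposal is correct and is exactly Zhu's standard argument. The paper itself gives no proof of this lemma and simply cites Zhu, so there is no different route to compare against. One bookkeeping slip in your last paragraph: for $o(f\circ g)$ the multiplier must be $z_2^{\wt g}$ (your $s=1$, matching $z_0^{-2}$), not $z_2^{\wt g-1}$. With that choice you get $o(f\circ g)n=\sum_{i\geq 0}\tbinom{-2}{i}(-1)^i\bigl(f_{\wt f-2-i}\,g_{\wt g+i}\,n-g_{\wt g-2-i}\,f_{\wt f+i}\,n\bigr)$, where each term vanishes on its own rather than cancelling. In the first group it is the $g$-mode $g_{\wt g+i}$, not an $f$-mode, that kills $n\in N_0$.
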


\begin{Lemma}\label{Lm6.1}
Suppose $\dim M$ is countable. Then, we have $\dim\mathcal {L}(M)$ is countable. Furthermore, $\mathcal {L}(M)_0=M.$
\end{Lemma}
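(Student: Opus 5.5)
The plan is to read the statement as two claims about $\mathcal{L}(M)=\mathcal{W}(M)/\mathcal{J}$. Here "countable" is understood, as the introduction suggests, for each homogeneous subspace. The first claim bounds the size of the graded pieces; the second identifies the top level.

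\textbf{Step 1: countability.} I would first control $L(M^*)$ and then transfer the bound to the restricted dual. Since $L(M^*)=U(M^*)/J$ and $U(M^*)\cong\mathcal{U}(\widehat{V'}(+))\otimes M^*$, each homogeneous piece $L(M^*)_n$ is a quotient of $\mathcal{U}(\widehat{V'}(+))_n\otimes M^*$. The space $\widehat{V'}$ is spanned by the images of $f\otimes t^k$ with $f$ homogeneous. The degree-$j$ part of $\widehat{V'}(+)$ is spanned by these images with $\wt f-k-1=j$, and $V'$ has countable dimension. Hence each $\mathcal{U}(\widehat{V'}(+))_n$ has countable dimension.

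This alone does not bound $L(M^*)_n^*$, because $M^*$ may be uncountable. So instead of working with $L(M^*)'$ directly, I would bound $\mathcal{W}(M)$. The sub comodule generated by $M$ is spanned by the coefficients of iterated coactions applied to elements of $M$. More concretely, $\mathcal{W}(M)_n$ is spanned by the second tensor factors arising from $\Delta_{k}$ applied to elements $m\in M$; this should follow from weak coassociativity, Proposition \ref{WCM}, in the same way that a submodule generated by a subspace is spanned by $v_k m$.

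Each $\Delta_{L(M^*)',k}(m)$ is a finite sum, since it is an element of $V\otimes L(M^*)'$ and its first factors lie in the finite-dimensional spaces $V_i$. Therefore the span of these factors over countably many $m$ (a countable basis of $M$), countably many $k$, and all $i$ is countable. The main obstacle is justifying this description of the generated sub comodule. I would prove that this span is closed under $\Yup_{L(M^*)'}(z)$ using the weak coassociativity identity, mirroring the standard VOA argument that $\Span\{v_km\}$ is a submodule. Countability then passes to the quotient $\mathcal{L}(M)$.

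\textbf{Step 2: $\mathcal{L}(M)_0=M$.} Under the embedding $M\hookrightarrow L(M^*)_0^*\subseteq L(M^*)'$, $M$ lies in degree $0$. I would show $\mathcal{W}(M)_0=M$ using the explicit spanning set from Step 1. A degree-$0$ coefficient of $\Delta_k(m)$ with $m\in M$ is of the form $\Res_z(z^{L(0)-1}\otimes\Id)\Yup(z)m$ pushed through a functional on $V$. By Theorem \ref{CM0}, together with the fact that the $C(V)$-comodule structure on $M$ dualizes the $A(V')$-action on $M^*=L(M^*)_0$, these coefficients lie in $M$. Dually, $(f\otimes m^*,\Yup m)=(m^*,o(f)^{\text{t}}\cdot)$ is computed by the $A(V')$-action, which is given by the $C(V)^*$-action through $\Phi$ (Proposition \ref{Prop5.2A-C}). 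So degree-$0$ coefficients stay in the image of $M$.

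Next, $\mathcal{J}\cap M=0$ and $\mathcal{J}$ is graded, so $\mathcal{J}_0\subseteq\mathcal{W}(M)_0=M$ forces $\mathcal{J}_0=0$. Hence $\mathcal{L}(M)_0=\mathcal{W}(M)_0/\mathcal{J}_0=M$. One should also check that the grading on $\mathcal{L}(M)$ starts in degree $0$, which is immediate since $L(M^*)'$ is $\mathbb{N}$-graded.
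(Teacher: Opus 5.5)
Your proof is correct. It uses the same ingredients as the paper, organized differently.

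\textbf{How the paper argues.} The paper sets $F^0(M)=M$ and lets $F^{k+1}(M)$ be the span of the second tensor factors of $\Delta_i(m)$ for $m\in F^k(M)$. The union of this filtration is closed under the coaction by construction, so it equals $\mathcal{W}(M)$. Countability therefore follows without any use of the Jacobi identity. The paper then proves $\mathcal{W}(M)_0=M$ by induction on $k$:
\begin{itemize}
\item[(a)] the base case is your computation, namely that for $m\in M$ the degree-zero part of $\Delta_{\wt f-1}(m)$ is the coaction $\Delta_M(m)$, because $o(f)$ acts on $M^*=L(M^*)_0$ through $\Phi$;
\item[(b)] the inductive step uses weak coassociativity (Proposition \ref{WCM}) to rewrite $(\Id\otimes\Delta_i)\circ\Delta_p(x)$ in terms of $(\Delta_j\otimes\Id)\circ\Delta_l(x)$.
\end{itemize}

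\textbf{How your route differs.} You use that same weak-coassociativity fact once, up front, to show the filtration already stops at $F^1(M)$. This is the comodule analogue of the module fact that the submodule generated by $U$ is $\Span\{v_nu\}$. After that, countability is immediate and the top-level claim needs only the base case. Your route replaces the induction by one structural lemma. The paper's countability half is more elementary, since it needs no such lemma. Your passage from $\mathcal{W}(M)_0=M$ to $\mathcal{L}(M)_0=M$ is also more explicit than the paper's ``by definition'': $\mathcal{J}$ is graded and $\mathcal{J}\cap M=0$, so $\mathcal{J}_0=0$.

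\textbf{Small points to tighten.}
\begin{itemize}
\item The inclusion $M\subseteq F^1(M)$ comes from the counit identity (Remark \ref{Rmk3.22}).
\item Theorem \ref{CM0} only puts the first tensor factor in $C(V)$. What puts the degree-zero second factors inside $M$, rather than just $M^{**}$, is the pairing computation through $\Phi$. That computation determines the element because $V_{k+1}\otimes M^{**}$ is separated by $V_{k+1}^*\otimes M^*$, as $\dim V_{k+1}<\infty$.
\item For the lemma you call the main obstacle, one clean proof is to dualize. In the restricted dual $V'$-module $\mathcal{M}'$ of the ambient comodule, the set $T_x=\{m^*:(g_lm^*,x)=0\ \forall g\in V',\ l\in\mathbb{Z}\}$ is a submodule by the associativity formula. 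The span $S_x$ of the second factors of the $\Delta_l(x)$ is, degree by degree, the annihilator of $T_x$. Pairing $\Delta_k(y)$ with $g\otimes m^*$ for $m^*\in T_x$ then shows $S_x$ is closed under the coaction.
\item If you instead invert the substitution $z_1=z_0+z_2$ directly, the needed truncation is $\Delta_p(x)=0$ for $p\ll0$. This runs opposite to the module case, and it is exactly what makes both expansions finite.
\end{itemize}
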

\proof Define $F^0(M)=M$, suppose we have already defined $F^k(M)$, define $F^{k+1}(M)$ to be the space spanned by the second tensor factors of all $$\Delta_i(m),i\in\mathbb{Z},m\in F^k(M).$$
By Remark \ref{Rmk3.22}, we can see that $F^k(M)\subseteq F^{k+1}(M)$ for all $k$. Hence, we get the following filtration $$F^0(M)\subseteq F^1(M)\subseteq\cdots\subseteq F^k(M)\subseteq\cdots.$$
Since $\Delta_i(m)$ is a finite sum, if $\dim M$ is countable, using induction, we can see that $\dim F^k(M)$ is countable for all $k.$

Now set $F=\cup_kF^k(M)$, we have $\dim F$ is countable. Since $L(M^*)'$ is an admissible $V$-comodule, it is obvious that $F$ is also an admissible $V$-comodule, such that $M\subseteq F\subseteq\mathcal {W}(M).$ Hence, $\mathcal {W}(M)=F$ and $\dim\mathcal {W}(M)$ is countable. Now $\mathcal {L}(M)$ is a quotient comodule of $\mathcal {W}(M)$, it is obvious that $\dim\mathcal {L}(M)$ is countable.

To show $\dim\mathcal {L}(M)_0=M,$ it is enough to show that $\mathcal {W}(M)_0=M$, which is equivalently to show for any $m\in\mathcal {W}(M)$, when the second tensor factors of $\Delta_i(m)$ belong to $\mathcal {W}(M)_0$ for any $i\in\mathbb{Z}$, they belong to $M$. Using above filtration, we prove that $\forall m\in F^k(M)$, when the second tensor factors of $\Delta_i(m)$ belong to $\mathcal {W}(M)_0$, they belong to $M$. By linearity, we can assume $m$ is homogeneous.

Induction on $k$. If $k=0$, for any $m\in M$, then $\Delta_i(m)=\sum m'\otimes m''\in (V\otimes\mathcal {W}(M))_{i+1}$. If $m''\in\mathcal {W}(M)_0$, then $m'\in V_{i+1}$. Hence for any $f\in V_{i+1}^*,m^*\in M^*$, we have
  \begin{eqnarray*}
   &&(\Delta_i(m),f\otimes m^*)=(m,(f\otimes t^i)m^*)\\
   &=&(m,\varrho_{M^*}(f\otimes m^*))=(\Delta_M(m),f\otimes m^*).
  \end{eqnarray*}
Since $(M,\Delta_M)$ is a $C(V)$-comodule, if $m'\in V_{i+1}$, then $m''\in M.$

Suppose it is true for $m\in F^k(M)$. Now let $m\in F^{k+1}(M).$ By definition, there are $m_s'\in V,m_s''\in \mathcal {W}(M)$ with $m_0''=m$ such that $\sum m_s'\otimes m_s''=\Delta_p(x)$ for some $p\in\mathbb{Z}, x\in F^k(M)$. Suppose $m_s'$ are linear independent, let $f\in V'$ be the dual element of $m_0'$, then we have
  \begin{eqnarray*}
   \Delta_i(m)&=&\Delta_i(\sum f(m_s')m_s'')=(\Id\otimes\Delta_i)\circ(f\otimes \Id)\circ\Delta_p(x)\\
   &=&(f\otimes \Id\otimes\Id)\circ(\Id\otimes \Delta_i)\circ\Delta_p(x).
  \end{eqnarray*}
Since $\mathcal {W}(M)$ is an admissible $V$-comodule, by Proposition \ref{WCM}, we have the following weak coassociativity
$$(z_0+z_2)^q(\Yup (z_0)\otimes \Id_{\mathcal {W}(\mathcal {M})})\circ \Yup_{\mathcal {W}(M)} (z_2)x=(z_0+z_2)^q(\Id_V\otimes \Yup_{\mathcal {W}(M)} (z_2))\circ \Yup_{\mathcal {W}(M)} (z_0+z_2)x$$for some $q\in\mathbb{N}$. Hence we can see that $(\Id\otimes \Delta_i)\circ\Delta_p(x)$ is a linear combination of $(\Delta_j\otimes \Id)\circ\Delta_l(x)$. By induction, we can see that if the second tensor factors of $\Delta_l(x)$ belong to $\mathcal {W}(M)_0$, they belong to $M$. Hence the statement is also true for $\Delta_i(m)$. Thus, we get $\mathcal {W}(M)_0=M.$ This means $\mathcal {L}(M)_0=M$ by definition.

This completes the proof.                                                            $\hfill\Box$

\begin{Proposition}
$\mathcal {L}$ is a functor from $C(V)$-comodules category to admissible $V$-comodules category such that $\Omega\circ\mathcal {L}=\Id.$ Furthermore, $\mathcal {L}$ sends simple objects to simple objects.
\end{Proposition}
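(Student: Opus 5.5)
To prove the Proposition, I will first make $\mathcal{L}$ a functor, then read off $\Omega\circ\mathcal{L}=\Id$ from Lemma \ref{Lm6.1}, and finally prove simplicity using the defining maximality of $\mathcal{J}$.

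\emph{Functoriality.} Let $\psi:M^1\rightarrow M^2$ be a $C(V)$-comodule homomorphism. Its transpose $\psi^*:(M^2)^*\rightarrow (M^1)^*$ is a $C(V)^*$-module map. Through $\Phi$ (Proposition \ref{Prop5.2A-C}) it is then an $A(V')$-module map, hence a $\widehat{V'}(0)$-module map. By the universal property of the induced module, $\psi^*$ extends to a $\widehat{V'}$-homomorphism $U((M^2)^*)\rightarrow U((M^1)^*)$. This map sends $J^2$ into $J^1$: if $\<m^{**},f u\>=0$ for all $m^{**}$ and $f$, then pairing with $(\psi^*)^*$ of elements of $(M^1)^{**}$ shows the image satisfies the same condition. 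So we obtain a $V'$-module map $L((M^2)^*)\rightarrow L((M^1)^*)$. Its restricted dual $L((M^1)^*)'\rightarrow L((M^2)^*)'$ is a $V$-comodule homomorphism whose degree-zero part is $\psi^{**}$, and this restricts to $\psi$ on $M^1\subseteq (M^1)^{**}$.

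Because comodule maps send the subcomodule generated by $M^1$ into the subcomodule generated by $\psi(M^1)\subseteq M^2$, we get a map $\mathcal{W}(M^1)\rightarrow\mathcal{W}(M^2)$. To descend to $\mathcal{L}$, I need $\mathcal{J}^1$ to map into $\mathcal{J}^2$. The image of $\mathcal{J}^1$ is a subcomodule, and its degree-zero part is the image of $\mathcal{J}^1_0\subseteq \mathcal{J}^1\cap M^1=0$. Since comodule maps preserve the grading, the image meets $M^2=\mathcal{W}(M^2)_0$ trivially, so it lies in $\mathcal{J}^2$ by maximality. Compatibility with composition and identities follows from the uniqueness of the universal extension.

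\emph{$\Omega\circ\mathcal{L}=\Id$.} By Lemma \ref{Lm6.1}, $\mathcal{W}(M)_0=M$. Since $\mathcal{J}\cap M=0$ and $\mathcal{J}$ is graded, we have $\mathcal{J}_0=0$, so $\mathcal{L}(M)_0=M$. I must also check that the $C(V)$-coaction given by Theorem \ref{CM0}, namely $\Res_z(z^{L(0)-1}\otimes\Id)\circ\Yup(z)$ on degree zero, equals $\Delta_M$. This is exactly the pairing computation already done in the $k=0$ step of Lemma \ref{Lm6.1}: $(\Delta_i(m),f\otimes m^*)=(m,(f\otimes t^{i})m^*)=(\Delta_M(m),f\otimes m^*)$ with $\wt f=i+1$. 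Summing over $i$ recovers the coaction. On morphisms, $\Omega\mathcal{L}(\psi)=\psi$ by the construction above.

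\emph{Simplicity.} Let $M$ be simple and let $\mathcal{N}\subseteq\mathcal{L}(M)$ be a nonzero admissible subcomodule. Then $N_0$ is a $C(V)$-subcomodule of $M$, by Theorem \ref{CM0} and Proposition \ref{P4.3}, so $N_0=0$ or $N_0=M$.
\begin{itemize}
\item If $N_0=M$, then $\mathcal{N}$ contains the image of $M$. Since $\mathcal{W}(M)$ is generated by $M$, we get $\mathcal{N}=\mathcal{L}(M)$.
\item If $N_0=0$, the preimage of $\mathcal{N}$ in $\mathcal{W}(M)$ is a subcomodule strictly containing $\mathcal{J}$ and meeting $M$ trivially, which contradicts maximality.
\end{itemize}

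The main obstacle is the existence and uniqueness of the maximal $\mathcal{J}$. Specifically, one must show that the sum of all subcomodules meeting $M$ trivially still meets $M$ trivially. This holds because subcomodules are graded, so such a subcomodule meets $M=\mathcal{W}(M)_0$ trivially iff its degree-zero component vanishes, and this property is preserved under sums. To justify gradedness, I would argue with the grading operator $L(0)$ acting on comodules, or else restrict attention to graded subcomodules throughout.
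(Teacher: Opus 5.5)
Your proof is correct and follows the paper's route: you transpose $\psi$, pass through $\Phi$ and Zhu's induced module, take restricted duals, and get simplicity from $N_0$ being a $C(V)$-subcomodule of $M$. Along the way you also supply steps the paper leaves implicit, namely that the map descends through $\mathcal{W}$ and $\mathcal{J}$, that the degree-zero coaction on $\mathcal{L}(M)$ is $\Delta_M$, and the case $N_0=0$ via maximality of $\mathcal{J}$. For the gradedness point, use your second option (take $\mathcal{J}$ maximal among graded subcomodules, matching the paper's notion of admissible subcomodule) rather than the $L(0)$ argument, since the $L(0)$-eigenspaces need not match the $\mathbb{N}$-grading: e.g.\ the diagonal in $V\oplus V$, with one copy's grading shifted by one, is an $L(0)$-stable but non-graded subcomodule.
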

\proof Let $\psi:M^1\rightarrow M^2$ be a $C(V)$-comodule homomorphism, then $\psi^*:(M^2)^*\rightarrow (M^1)^*$ is a $C(V)^*$-module homomorphism. By Proposition \ref{Prop5.2A-C}, $\psi^*$ is also an $A(V')$-module homomorphism. By Zhu's theory, $\psi^*$ induces a $V'$-module homomorphism from $L((M^2)^*)$ to $L((M^1)^*)$, still denoted by $\psi^*.$ Now, $\psi^*$ induces a $V$-comodule homomorphism $\psi^{**}:L((M^1)^*)'\rightarrow L((M^2)^*)'$ and $\psi^{**}|_{M^1_0}=\psi.$ Define $\mathcal {L}(\psi)=\psi^{**}|_{\mathcal {L}(M^1)}$, we get $\mathcal {L}(\psi):\mathcal {L}(M^1)\rightarrow\mathcal {L}(M^2)$ is a $V$-comodule homomorphism. This proves $\mathcal {L}$ is a functor.

$\Omega\circ\mathcal {L}=\Id$ is obvious by definitions.

Now let $M$ be a simple $C(V)$-comodule. If $\mathcal {L}(M)$ is not simple, then it has a sub comodule $\mathcal {M}=\oplus_{i\geq0}M_i$ such that $M_0\neq0$, Now $M_0$ is a sub $C(V)$-comodule of $M$. Hence $M_0=M$. This implies $\mathcal {M}=\mathcal {L}(M)$ and $\mathcal {L}(M)$ is simple.                                               $\hfill\Box$

\begin{Theorem}
If $V$ is a corational graded vertex operator coalgebra such that $\dim C(V)<\infty$, then $C(V)$ is a cosemisimple coassociative coalgebra.
\end{Theorem}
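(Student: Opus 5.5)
By Lemma \ref{coss}, it suffices to show that every left $C(V)$-comodule is cosemisimple. Since $C(V)$ is finite dimensional, every comodule is the sum of its finite-dimensional subcomodules (local finiteness), and a sum of simple subcomodules is a direct sum of some of them. So it is enough to show that every finite-dimensional $C(V)$-comodule $M$ is a direct sum of simple subcomodules. Equivalently, every subcomodule $N\subseteq M$ should have a complement.

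Let $M$ be a finite-dimensional $C(V)$-comodule and consider the admissible $V$-comodule $\mathcal{W}(M)\subseteq L(M^*)'$ built in Section 6. Lemma \ref{Lm6.1} gives $\mathcal{W}(M)_0=M$. By corationality, $\mathcal{W}(M)=\bigoplus_\alpha \mathcal{M}^\alpha$ with each $\mathcal{M}^\alpha$ a simple admissible $V$-comodule. I will check that the direct sum decomposition is compatible with the grading, i.e. that each $\mathcal{M}^\alpha$ is graded and $\mathcal{W}(M)_0=\bigoplus_\alpha M^\alpha_0$. Applying the functor $\Omega$ (Theorem \ref{CM0} and Proposition \ref{P4.3}) then gives $M=\bigoplus_\alpha M^\alpha_0$ as $C(V)$-comodules, because the inclusions $\mathcal{M}^\alpha\hookrightarrow\mathcal{W}(M)$ are $V$-comodule homomorphisms.

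It remains to show that each nonzero $M^\alpha_0$ is a simple $C(V)$-comodule. Suppose $0\neq N\subsetneq M^\alpha_0$ is a subcomodule. I plan to generate from $N$, inside $\mathcal{M}^\alpha$, the admissible subcomodule $\mathcal{W}'$ obtained as in the proof of Lemma \ref{Lm6.1}: take iterated second tensor factors of the maps $\Delta_i$. The same induction should give $\mathcal{W}'_0=N$. Simplicity of $\mathcal{M}^\alpha$ then forces $\mathcal{W}'=\mathcal{M}^\alpha$, so $M^\alpha_0=N$, a contradiction.

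The main obstacle is this last step. Lemma \ref{Lm6.1} was proved for $M$ sitting inside $L(M^*)'$, where the degree-zero part is controlled by Zhu's construction. I need the analogous claim that the subcomodule generated by $N\subseteq M_0$ has degree-zero part exactly $N$. I would rerun the inductive argument of Lemma \ref{Lm6.1}: by the pairing computation there, a second tensor factor landing in degree $0$ is obtained from $\Delta_{M_0}$, and weak coassociativity (Proposition \ref{WCM}) reduces the deeper iterates to compositions $(\Delta_j\otimes\Id)\circ\Delta_l$. Those compositions preserve $N$ because $N$ is a $C(V)$-subcomodule and $\Delta_{M_0}$ is $\Res_z z^{L(0)-1}$ applied to $\Yup_\mathcal{M}$. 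A secondary issue is the grading compatibility of the decomposition; if it fails, I would instead use the direct-sum projections, which are $V$-comodule maps, together with the fact that $M^\alpha_t$ is the degree-$t$ part of $\mathcal{M}^\alpha$.
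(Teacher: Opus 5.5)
Your proposal follows the paper's own route: realize $M$ as the degree-zero part of an admissible $V$-comodule (you use $\mathcal{W}(M)$ where the paper uses $\mathcal{L}(M)$; both have degree-zero part $M$ by Lemma \ref{Lm6.1}), apply corationality, deduce that $M$ is cosemisimple, and conclude with Lemma \ref{coss}. The paper only asserts that the top level of a cosemisimple admissible comodule is a cosemisimple $C(V)$-comodule, and your two checks supply the justification that step needs: that the decomposition respects the grading, and that each nonzero $M^\alpha_0$ is simple because the subcomodule generated by a subcomodule $N\subseteq M^\alpha_0$ has degree-zero part $N$, by the Lemma \ref{Lm6.1}-style induction.
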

\proof Let $M$ be a $C(V)$-comodule, then there is an admissible $V$-comodule $\mathcal {L}(M)$ such that $\mathcal {L}(M)_0=M$. If $V$ is corational, then $\mathcal {L}(M)$ is cosemisimple. Hence $\mathcal {L}(M)_0$ is a cosemisimple $C(V)$-comodule, i.e., $M$ is a cosemisimple $C(V)$-comodule. This implies $C(V)$ is cosemisimple.                                 $\hfill\Box$

\begin{Corollary}
If $V$ is a corational graded vertex operator coalgebra such that $\dim C(V)<\infty$, then $V$ has only finitely many inequivalent irreducible admissible comodules.
\end{Corollary}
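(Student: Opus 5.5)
The plan is to pass through the top-level functor $\Omega$ and its one-sided inverse $\mathcal{L}$, and then use the structure of a finite-dimensional cosemisimple coalgebra. By the preceding Theorem, $C(V)$ is a finite-dimensional cosemisimple coalgebra. It is therefore a finite direct sum of simple subcoalgebras $C(V)=\oplus_{j=1}^r C_j$. Each simple subcoalgebra of a finite-dimensional coalgebra is the dual of a simple (matrix) algebra, so it has, up to isomorphism, exactly one simple left comodule. Hence $C(V)$ has only finitely many (namely $r$) inequivalent simple comodules, all finite dimensional. Equivalently, via Proposition \ref{A-C}, $C(V)^*$ is a finite-dimensional semisimple algebra with $r$ simple modules.

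Next I would show that an irreducible admissible $V$-comodule $\mathcal{M}$ is determined by its top level $\Omega(\mathcal{M})=M_0$, which I take to be nonzero after shifting the grading. The first step is to show that $M_0$ is a simple $C(V)$-comodule. If $N\subsetneq M_0$ were a proper nonzero subcomodule, the admissible subcomodule of $\mathcal{M}$ generated by $N$ would be proper; to see this, argue as in Lemma \ref{Lm6.1}, where the generated subcomodule has top level equal to the generating $C(V)$-comodule. Simplicity of $\mathcal{M}$ rules this out.

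The second step is to compare $\mathcal{M}$ with $\mathcal{L}(M_0)$. By corationality both are simple, and their top levels agree by $\Omega\circ\mathcal{L}=\Id$. To identify them, I would dualize. By Proposition \ref{VC-VA}, $\mathcal{M}'$ is an admissible $V'$-module, and $\mathcal{M}$ irreducible makes $\mathcal{M}'$ irreducible, since submodules and subcomodules correspond under restricted duality because the homogeneous spaces pair. Zhu's theory then says an irreducible admissible $V'$-module is determined by its top level as an $A(V')$-module. That top level is $M_0^*$, whose $A(V')$-action factors through $\Phi:A(V')\to C(V)^*$ of Proposition \ref{Prop5.2A-C}. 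Hence isomorphic top levels give $\mathcal{M}'\cong\mathcal{L}(M_0)'$, and taking restricted duals back gives $\mathcal{M}\cong\mathcal{L}(M_0)$. The main obstacle is this identification step. It needs the homogeneous components to be finite dimensional, which holds because $M_0$ is finite dimensional as a simple comodule of a finite-dimensional coalgebra, and Zhu's $L(M_0^*)$ then has finite-dimensional graded pieces. It also needs the correspondence between subcomodules and submodules under restricted duality to be justified.

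Combining the two steps, $\mathcal{M}\mapsto M_0$ is an injection from isomorphism classes of irreducible admissible $V$-comodules into isomorphism classes of simple $C(V)$-comodules. The latter set has $r$ elements, so $V$ has at most $r$ inequivalent irreducible admissible comodules. Since $\mathcal{L}$ sends simple objects to simple objects, the count is in fact exactly $r$.
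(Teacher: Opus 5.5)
Your strategy is the one the paper implicitly relies on. The paper gives no argument for this corollary beyond the preceding theorem, and it never proves that $\mathcal{M}\mapsto M_0$ is injective on simple objects. You rightly isolate that step, and your first step (simplicity of $M_0$) is true. But the identification $\mathcal{M}\cong\mathcal{L}(M_0)$, as you argue it, has a genuine gap.

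\textbf{The gap.} To pass from ``$\mathcal{M}$ simple'' to ``$\mathcal{M}'$ irreducible'' via annihilators, every $M_n$ of the \emph{given} $\mathcal{M}$ must be finite dimensional. Otherwise a proper graded submodule $Y\subsetneq\mathcal{M}'$ can have $Y^\perp=0$ in $\mathcal{M}$. Then simplicity of $\mathcal{M}$ says nothing about $Y$, and there is no reason for $\mathcal{M}'$ to be irreducible, or even generated by $M_0^*$.

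Your justification of finiteness concerns $L(M_0^*)$, not $\mathcal{M}$. That is circular, since linking $\mathcal{M}$ to $L(M_0^*)$ is exactly what you are trying to prove. Moreover, Zhu's theory does not give $\dim L(U)_n<\infty$ for finite-dimensional $U$ over an arbitrary vertex operator algebra. $L(U)_n$ is an irreducible $A_n(V')$-module, so it is finite dimensional when, e.g., $\dim A_n(V')<\infty$, but the corollary does not assume this.

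\textbf{A repair that needs no finiteness.} Never dualize $\mathcal{M}$ itself.
\begin{enumerate}
\item The key compatibility: for homogeneous $f\in V'$, $\phi\in M_0^*$ and $m\in M_0$, one has $(f_{\wt f-1}\phi,m)=(f\otimes\phi,\Delta_{M_0}(m))$. So the zero-mode $A(V')$-action on the top level of $\mathcal{M}'$ is the $C(V)^*$-action pulled back along $\Phi$.
\item Let $\mathcal{N}\subseteq\mathcal{M}'$ be the submodule generated by $M_0^*$. It is the image of $U(M_0^*)$, and the kernel is a graded submodule meeting $M_0^*$ trivially, hence lies in $J$. So $L(M_0^*)$ is a quotient of $\mathcal{N}$, and dually $L(M_0^*)'\hookrightarrow\mathcal{N}'$ as admissible $V$-comodules.
\item Restriction $m\mapsto m|_{\mathcal N}$ is a comodule map $\mathcal{M}\to\mathcal{N}'$, because $(\Yup_{\mathcal{N}'}(z)(m|_{\mathcal N}),f\otimes n)=(f\otimes n,\Yup_{\mathcal M}(z)m)$.
\item In degree $0$ this map is the canonical injection $M_0\to M_0^{**}$, so it is injective because $\mathcal{M}$ is simple.
\item Its image is simple and contains $M_0$, hence equals the subcomodule of $\mathcal{N}'$ generated by $M_0$. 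That subcomodule lies inside $L(M_0^*)'$ and is exactly $\mathcal{W}(M_0)$.
\item Therefore $\mathcal{M}\cong\mathcal{W}(M_0)$, which forces $\mathcal{J}=0$ and gives $\mathcal{M}\cong\mathcal{L}(M_0)$.
\end{enumerate}
This uses restricted duality only in the direction from $V'$-modules to $V$-comodules, which needs only $\dim V_s<\infty$.

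\textbf{Step one, more cleanly.} The same annihilator device proves your first step better than ``as in Lemma \ref{Lm6.1}''. Take a $C(V)$-subcomodule $N\subseteq M_0$, and let $X\subseteq\mathcal{M}'$ be the largest submodule orthogonal to $N$. Degree-zero elements of $\mathcal{U}(\widehat{V'})$ act on $M_0^*$ through the image of $A(V')$, so they preserve $N^\perp$; hence $X_0=N^\perp$. Then $X^\perp$ is a subcomodule with $(X^\perp)_0=N$. So a proper nonzero $N$ would give a proper nonzero subcomodule of $\mathcal{M}$.

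With these two repairs, the rest of your argument goes through, including the exact count $r$.
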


\section{Higher level Zhu's coalgebras}
In this section section, we introduce a series of coassociative coalgebra $C^k(V)$ associated to a graded vertex operator coalgebra $(V,\Yup (z),c,\rho)$, for every $k\in\mathbb{N} $.

First, define $$C^k(V)=\{v\in V| \Res_z(\frac{(1+z)^{L(0)+k}}{z^{2k+2}}\otimes\Id)\circ\Yup(z)v=0,L(1)v+L(0)v=0\}.$$For any $v\in C^k(V),$ define $$\Delta^k(v)=\sum_{l=0}^k(-1)^l\tbinom{k+l}{l}\Res_z(\frac{(1+z)^{L(0)+k}}{z^{k+l+1}}\otimes\Id)\circ\Yup(z)v.$$

\begin{Lemma}
Suppose $\dim C^k(V)<\infty$. Let $v\in C^k(V).$ Suppose $i\geq j\geq0$, we have $$\Res_z(\frac{(1+z)^{L(0)+k+j}}{z^{2k+2+i}}\otimes\Id)\circ\Yup(z)v=0.$$In particular, $C^k(V)\subseteq C^{k+1}(V)$ for all $k\in\mathbb{N}$.
\end{Lemma}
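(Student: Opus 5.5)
The proof follows the proof of Lemma \ref{CV2}. For $v\in C^k(V)$ and integers $a\geq0$, $b\geq1$, write
$$R(a,b)=\Res_z\Big(\frac{(1+z)^{L(0)+a}}{z^{b}}\otimes\Id\Big)\circ\Yup(z)v .$$
The hypothesis $v\in C^k(V)$ says exactly that $R(k,2k+2)=0$, and the goal is $R(k+j,2k+2+i)=0$ for all $i\geq j\geq0$. The condition $L(1)v+L(0)v=0$ in the definition of $C^k(V)$ plays no role in the vanishing statement. It matters only for the final inclusion, and there it is simply carried along. The assumption $\dim C^k(V)<\infty$ is not needed either.

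\emph{Step 1 (reduce to $j=0$).} Expand $(1+z)^{L(0)+k+j}=\sum_{r=0}^{j}\tbinom{j}{r}z^r(1+z)^{L(0)+k}$. This gives
$$R(k+j,2k+2+i)=\sum_{r=0}^{j}\tbinom{j}{r}R(k,2k+2+i-r).$$
Since $i-r\geq i-j\geq0$, it suffices to prove $R(k,2k+2+l)=0$ for every $l\geq0$.

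\emph{Step 2 (induction on $l$).} The case $l=0$ is the definition of $C^k(V)$. Suppose $R(k,b)=0$ with $b=2k+2+l\geq2$, and apply $L(1)\otimes\Id$ to this identity.

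Since $L(1)$ is homogeneous of degree $-1$, we have $L(1)(1+z)^{L(0)+k}=(1+z)^{L(0)+k+1}L(1)$. The $L(1)$-derivative property (\ref{L(1D)3.6}) then turns the expression into
$$\Res_z\Big(\frac{(1+z)^{L(0)+k+1}}{z^{b}}\otimes\Id\Big)\frac{d}{dz}\Yup(z)v .$$
Integrate by parts using $\Res_z f'g=-\Res_z fg'$. Then split $\frac{(1+z)^{L(0)+k+1}}{z^{b+1}}=\frac{(1+z)^{L(0)+k}}{z^{b+1}}+\frac{(1+z)^{L(0)+k}}{z^{b}}$. This yields
$$0=-\big((L(0)+k+1)\otimes\Id\big)R(k,b)+b\,R(k,b)+b\,R(k,b+1).$$
The first two terms vanish by the induction hypothesis, and $b\neq0$, so $R(k,b+1)=0$. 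This closes the induction.

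\emph{Step 3 (the inclusion).} For $v\in C^k(V)$, the first defining condition of $C^{k+1}(V)$ is $R(k+1,2k+4)=0$. This is the case $j=1$, $i=2$ of the lemma. The condition $L(1)v+L(0)v=0$ is the same in both definitions. Hence $C^k(V)\subseteq C^{k+1}(V)$.

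The only delicate point is the sign and bookkeeping in Step 2. One must commute $L(1)$ past $(1+z)^{L(0)+k}$ correctly, which raises the exponent by one. One must also check that the coefficient in front of $R(k,b+1)$ is exactly $b$, which is nonzero. Everything else is the same manipulation already carried out in Lemma \ref{CV2}.
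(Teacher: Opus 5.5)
Your proof is correct and follows the paper's route. The paper just says the lemma follows as in Lemma \ref{CV2}: reduce to $j=0$ by the binomial expansion, then induct on the power of $z$ by applying $L(1)\otimes\Id$, using the $L(1)$-derivative property and integrating by parts, which is exactly what you carry out with the exponent shifted by $k$. The paper also cites Proposition \ref{Prop}, but you are right that $L(1)v+L(0)v=0$ plays no role in the vanishing and only enters as the second defining condition of $C^{k+1}(V)$.
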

\proof Using Proposition \ref{Prop}, this is similar to the proof of Lemma \ref{CV2}, we omit it.
  $\hfill\Box$

\begin{Proposition}
Suppose $\dim C^k(V)<\infty$ for all $k\in\mathbb{N}$.

(i) $(C^k(V),\Delta^k,c)$ is a coassociative coalgebra. When $k=0$, we arrive at $C(V).$

(ii) There is a filtration of coassociative coalgebras $$C^0(V)\subseteq C^1(V)\subseteq\cdots\subseteq C^k(V)\subseteq\cdots.$$
\end{Proposition}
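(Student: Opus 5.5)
We follow the proof of Lemma \ref{CV0}, Lemma \ref{CV3} and Theorem \ref{Thm1}, replacing the kernel $\frac{(1+z)^{L(0)}}{z^2}$ by $\frac{(1+z)^{L(0)+k}}{z^{2k+2}}$ and $\frac{(1+z)^{L(0)}}{z}$ by the combination defining $\Delta^k$. For $k=0$ the defining residue condition of $C^0(V)$ is that of $C(V)$, and by Proposition \ref{Prop} the extra condition $L(1)v+L(0)v=0$ holds automatically on $C(V)$. The sum defining $\Delta^0$ reduces to the single term $\Res_z(\frac{(1+z)^{L(0)}}{z}\otimes\Id)\circ\Yup(z)v$. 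Hence $C^0(V)=C(V)$ and $\Delta^0=\Delta$, which is the last assertion of (i).

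\emph{Counit identities.} For $(\Id\otimes c)\circ\Delta^k(v)$, we use that $(\Id\otimes c)\circ\Yup(z)v\in V[[z]]$ with constant term $v$ and $z$-coefficient $L(1)v$. Indeed, $D^*=L(1)$ in the operator coalgebra case, by Lemma \ref{L2.1} and the computation in Proposition \ref{Prop}. Expanding $(1+z)^{L(0)+k}z^{-k-l-1}$, the residue picks up finitely many terms $\binom{L(0)+k}{j}$ applied to the $z^{j-k-l}$-coefficient of $(\Id\otimes c)\circ\Yup(z)v$. These coefficients are $L(1)^{j-k-l}v/(j-k-l)!$ (this formula comes from skew-symmetry). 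Using $L(1)v=-L(0)v$ repeatedly, the sum over $l$ with the weights $(-1)^l\binom{k+l}{l}$ should telescope to $v$, via a binomial identity of the type used in the higher Zhu algebra $A_k$ of \cite{DLM1}. The identity $(c\otimes\Id)\circ\Delta^k(v)=v$ follows exactly as in Lemma \ref{CV0}: $c$ kills all $v'$ except those with $k=-1$ in the expansion, so only the $z^{-1}$ coefficient of the $l=0$ term survives and it yields $v$.

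\emph{$\Delta^k(C^k(V))\subseteq C^k(V)\otimes C^k(V)$ and coassociativity.} We rerun the two Jacobi-identity computations of Lemma \ref{CV3}, using the generalized vanishing lemma stated just above (the analogue of Lemma \ref{CV2}) in place of Lemma \ref{CV2}. The steps are as follows. Apply $\Yup$ in the second factor and use the Jacobi identity (\ref{JI3.4}) to split into a $\tau$-term and an iterate term. The $\tau$-term vanishes by the generalized vanishing lemma. The iterate term is rewritten with Lemma \ref{L(0)delta} into residues of the form $\Res_{z}\frac{(1+z)^{L(0)+k+j}}{z^{2k+2+i}}$ with $i\ge j$, which also vanish. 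The left factor is handled symmetrically. We also must check $L(1)+L(0)$ annihilates each tensor factor. We get this from (\ref{L(1D)3.6}) together with the $L(1)$-behaviour of $\Yup$ (derivation property $\Yup(z)\circ L(1)=(L(1)\otimes\Id+\Id\otimes L(1))\circ\Yup(z)+$ derivative terms), so that $L(1)+L(0)$ commutes appropriately with the residue. Coassociativity is obtained as in the last computation of Lemma \ref{CV3}: both iterates are expressed through the same double residue, and the discrepancy terms are killed by the vanishing lemma. Alternatively, and more cleanly, we dualize. The analogue of Lemma \ref{Lm5.1} shows $C^k(V)=O_k(V')^\perp$, and the pairing $(\Delta^k v,f\otimes g)=(v,f*_k g)$ holds with the DLM product $*_k$. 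Coassociativity of $\Delta^k$ and the inclusion into $C^k\otimes C^k$ then follow from associativity of $A_k(V')$ and the fact that $O_k(V')$ is a two-sided ideal. I would pursue this duality route, since it reduces everything to \cite{DLM1}.

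\emph{Filtration (ii).} The inclusion $C^k(V)\subseteq C^{k+1}(V)$ is the preceding lemma. The main obstacle is to show that $\Delta^{k+1}|_{C^k(V)}=\Delta^k$. In the dual picture this is the statement that the natural surjection $A_{k+1}(V')\to A_k(V')$ of \cite{DLM1} is an algebra homomorphism, i.e.\ $f*_{k+1}g\equiv f*_k g \bmod O_k(V')$. Pairing with $v\in C^k(V)=O_k(V')^\perp$ gives the equality directly. Proving this directly on the coalgebra side would require a delicate combinatorial identity between the two residue sums modulo the vanishing lemma, so I would rely on the DLM result.
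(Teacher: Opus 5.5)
Your proof is correct at the paper's level of rigor, but it takes a genuinely different route.

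\textbf{How the paper argues.} The paper stays on the coalgebra side. For (i) it says that the Jacobi-identity computations of Lemmas \ref{CV0} and \ref{CV3} carry over to $\Delta^k$. For (ii) it checks $\Delta^{k+1}|_{C^k(V)}=\Delta^k$ directly:
\begin{itemize}
\item write $(1+z)^{L(0)+k+1}=(1+z)^{L(0)+k}+z(1+z)^{L(0)+k}$ and shift the summation index;
\item combine coefficients by Pascal's rule $\binom{k+1+l}{l}-\binom{k+l}{l-1}=\binom{k+l}{l}$;
\item discard the two leftover terms with $z^{-(2k+2)}$ and $z^{-(2k+3)}$, using the definition of $C^k(V)$ and the vanishing lemma.
\end{itemize}
That is a few lines, not a delicate identity. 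It is exactly the dual of the Dong--Li--Mason computation $f*_{k+1}g-f*_kg\in O_k(V')$ that you quote.

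\textbf{How your route differs.} You pass everything through the pairing $(\Delta^kv,f\otimes g)=(v,f*_kg)$ and the identification $C^k(V)=O_k(V')^\perp\cap V$. Here $L(1)v+L(0)v=0$ is dual to the generators $(L(-1)+L(0))f$ of $O_k(V')$. Closure, coassociativity, the counit and the filtration then follow from results in \cite{DLM1}: the two-sided ideal property, associativity and unit of $A_k(V')$, and the epimorphism $A_{k+1}(V')\to A_k(V')$. In particular, $L(1)+L(0)$ killing both tensor factors, which your direct sketch only gestures at, becomes automatic. This gives an actual proof of (i) for $k\ge1$, where the paper's ``similar to section 3'' hides computations considerably heavier than for $k=0$. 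The price is that your argument depends on $V'$ being a vertex operator algebra (Proposition \ref{VC-VA}) and on \cite{DLM1}, rather than being intrinsic to $V$.

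\textbf{Two details to fix.}
\begin{itemize}
\item In the counit computation the residue picks the $z^{k+l-j}$-coefficient, not the $z^{j-k-l}$-coefficient. Also, nothing telescopes. Since $(\Id\otimes c)\circ\Yup(z)v=e^{zL(1)}v$ (Lemma \ref{L2.1}) and $(L(1)+L(0))v=0$, you get $e^{zL(1)}v=(1+z)^{-L(0)}v$. Hence $(1+z)^{L(0)+k}(\Id\otimes c)\circ\Yup(z)v=(1+z)^kv$, and $\Res_z(1+z)^kz^{-k-l-1}=\binom{k}{k+l}=\delta_{l,0}$. So each term with $l\ge1$ vanishes on its own; dually, this is just the fact that $\mathbf{1}+O_k(V')$ is the unit of $A_k(V')$.
\item You deduce $\Delta^k(v)\in C^k(V)\otimes C^k(V)$ from the vanishing of $\Delta^k(v)$ on $O_k(V')\otimes V'+V'\otimes O_k(V')$. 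That step requires $\Delta^k(v)$ to be a finite tensor in $V\otimes V$, so that you can pair against elements of $V'$ dual to linearly independent tensor factors. A priori it is only an element of $\prod_{w,s}V_w\otimes V_s$, since the defining residue is an infinite sum of the $\Delta_m(v)$. The paper makes the same tacit assumption in Lemma \ref{CV3}, but you should state it explicitly.
\end{itemize}
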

\proof (i) This is similar to section 3, we omit it.

(ii) Let $v\in C^k(V)\subseteq C^{k+1}(V)$, by above Lemma, we have
  \begin{eqnarray*}
   &&\Delta^{k+1}(v)=\sum_{l=0}^{k+1}(-1)^l\tbinom{k+1+l}{l}\Res_z(\frac{(1+z)^{L(0)+k+1}}{z^{k+1+l+1}}\otimes\Id)\circ\Yup(z)v\\
   &=&\sum_{l=0}^{k+1}(-1)^l\tbinom{k+1+l}{l}\Res_z(\frac{(1+z)^{L(0)+k}}{z^{k+1+l+1}}+\frac{(1+z)^{L(0)+k}}{z^{k+1+l}}\otimes\Id)\circ\Yup(z)v\\
   &=&\sum_{l=1}^{k+1}(-1)^{l-1}\tbinom{k+l}{l-1}\Res_z(\frac{(1+z)^{L(0)+k}}{z^{k+l+1}}\otimes\Id)\circ\Yup(z)v\\
   &&~~~~~~+(-1)^{k+1}\tbinom{2k+2}{k+1}\Res_z(\frac{(1+z)^{L(0)+k}}{z^{2k+3}}\otimes\Id)\circ\Yup(z)v\\
   &&~~~~\sum_{l=0}^{k+1}(-1)^l\tbinom{k+1+l}{l}\Res_z(\frac{(1+z)^{L(0)+k}}{z^{k+1+l}}\otimes\Id)\circ\Yup(z)v\\
   &=&\sum_{l=1}^{k+1}(-1)^{l}(\tbinom{k+1+l}{l}-\tbinom{k+l}{l-1})\Res_z(\frac{(1+z)^{L(0)+k}}{z^{k+l+1}}\otimes\Id)\circ\Yup(z)v\\
   &&+\Res_z(\frac{(1+z)^{L(0)+k}}{z^{k+1}}\otimes\Id)\circ\Yup(z)v+(-1)^{k+1}\tbinom{2k+2}{k+1}\Res_z(\frac{(1+z)^{L(0)+k}}{z^{2k+3}}\otimes\Id)\circ\Yup(z)v\\
   &=&\sum_{l=0}^{k+1}(-1)^{l}\tbinom{k+l}{l}\Res_z(\frac{(1+z)^{L(0)+k}}{z^{k+l+1}}\otimes\Id)\circ\Yup(z)v\\
   &=&\sum_{l=0}^{k}(-1)^{l}\tbinom{k+l}{l}\Res_z(\frac{(1+z)^{L(0)+k}}{z^{k+l+1}}\otimes\Id)\circ\Yup(z)v=\Delta^k(v).
  \end{eqnarray*}
Again by above Lemma, $C^k(V)$ is a sub coalgebra of $C^{k+1}(V)$.                   $\hfill\Box$

\begin{Proposition}
Suppose $\dim C^k(V)<\infty$.
Let $(\mathcal {M},\Yup_\mathcal {M}(z))$ be an admissible $V$-comodule. Then $\Omega^k(\mathcal {M})=\oplus_{i=0}^kM_i$ is a $C^k(V)$-comodule with comodule structure defined as $$\Delta_{\Omega^k(\mathcal {M})}m=\Res_z(z^{L(0)-1}\otimes \Id)\circ\Yup_\mathcal {M}(z)m,$$where $m\in\Omega^k(\mathcal {M}).$ Furthermore, $\Omega^k$ is a functor from admissible $V$-comodules category to $C^k(V)$-comodules category.
\end{Proposition}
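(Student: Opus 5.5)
Let $m\in M_t$ with $0\le t\le k$ and write $\Yup_\mathcal{M}(z)m$ as in Remark \ref{Rmk3.22}. I would mirror the proof of Theorem \ref{CM0}, with Lemma \ref{LM4.1} replaced by a graded version.

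The first step is the graded analogue of Lemma \ref{LM4.1}. For $m\in M_t$, the coefficient $m'_{t+p+1-s}\otimes m''_s z^{-p-1}$ becomes $z^{t-s+i}$ after applying $z^{L(0)+i}$. Hence
\[
\Res_z(z^{L(0)+i}\otimes\Id_\mathcal{M})\circ\Yup_\mathcal{M}(z)m
\]
picks out exactly the components with $m''_s\in M_{t+i+1}$. It therefore vanishes for $i<-t-1$, and in particular for $i<-k-1$ when $m\in\Omega^k(\mathcal{M})$. With $i=-1$ it also gives
\[
\Delta_{\Omega^k(\mathcal{M})}m=\sum m'\otimes m''_t\in V\otimes M_t\subseteq V\otimes\Omega^k(\mathcal{M}).
\]
The counit identity $(c\otimes\Id)\circ\Delta_{\Omega^k(\mathcal{M})}m=m$ then follows from Remark \ref{Rmk3.22}(i) exactly as in Theorem \ref{CM0}.

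The second step is to show that the first tensor factors lie in $C^k(V)$. I would apply
\[
\Res_{z_1}\frac{(1+z_1)^{L(0)+k}}{z_1^{2k+2}}\,\Yup(z_1)\otimes\Id_\mathcal{M}
\]
to $\Delta_{\Omega^k(\mathcal{M})}m$. Then, as in the second half of the proof of Lemma \ref{CV3}, I would use Lemma \ref{L(0)delta} to move the $L(0)$ powers, and the Jacobi identity (\ref{JIM3.4}) to trade the iterate for the two products. After expanding $(z_1-z_2)^{-2k-2}$ binomially, each term contains a factor $\Res_{z}(z^{L(0)-2k-2-i}\otimes\Id)\circ\Yup_\mathcal{M}(z)m$ or the analogous factor in $z_2$, with $i\ge0$. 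These exponents are $<-k-1$, so all such terms vanish by the first step.

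The condition $L(1)v+L(0)v=0$ on the first factors needs separate treatment. I would use (\ref{L(1D)M3.6}):
\[
(L(1)\otimes\Id)\circ\Yup_\mathcal{M}(z)=\frac{d}{dz}\Yup_\mathcal{M}(z).
\]
Integrating by parts gives
\[
\Res_z(z^{L(0)-1}L(1)\otimes\Id)\circ\Yup_\mathcal{M}(z)m=-\Res_z\bigl(\tfrac{d}{dz}z^{L(0)}\bigr)\cdots=-(L(0)\otimes\Id)\circ\Delta_{\Omega^k(\mathcal{M})}m,
\]
since $L(1)$ lowers weight by one. Hence $(L(1)+L(0))\otimes\Id$ annihilates $\Delta_{\Omega^k(\mathcal{M})}m$. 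I expect the binomial and residue bookkeeping in this second step, namely checking that every surviving exponent is below $-k-1$, to be the main obstacle.

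The third step is coassociativity with respect to $\Delta^k$. Here $\Delta^k$ is a combination of $\Res_z\frac{(1+z)^{L(0)+k}}{z^{k+l+1}}$, and on weight-graded pieces the comodule map is effectively $z^{L(0)-1}$. I would redo the computation at the end of the proof of Theorem \ref{CM0}: expand $(z_1-z_2)^{-k-l-1}$, apply Jacobi, and discard the terms killed by the first step. The coefficients $(-1)^l\binom{k+l}{l}$ should then collapse, through the same identity $\binom{k+1+l}{l}-\binom{k+l}{l-1}=\binom{k+l}{l}$ used in the filtration proposition, to the single term $(\Id\otimes\Delta_{\Omega^k(\mathcal{M})})\circ\Delta_{\Omega^k(\mathcal{M})}m$. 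Finally, functoriality is verbatim the proof of Proposition \ref{P4.3}: a $V$-comodule homomorphism commutes with $\Yup$, and it preserves degrees (I would take this as part of the notion of admissible homomorphism), so it restricts to $\Omega^k$.
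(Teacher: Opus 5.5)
You are following the route the paper intends; its proof is only ``similar to section 4''. That route consists of a graded form of Lemma~\ref{LM4.1}, the Jacobi-identity computation of Theorem~\ref{CM0}, the $L(1)$-derivative argument of Proposition~\ref{Prop}, and Proposition~\ref{P4.3} for functoriality.

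\textbf{The gap in your first step.} Your first step claims $\Delta_{\Omega^k(\mathcal M)}m=\sum m'\otimes m''_t\in V\otimes M_t$. This is a genuine gap, and the paper's Theorem~\ref{CM0} makes the same claim. For $m\in M_t$, the residue collects the component $x_p\in V_{p+1}\otimes M_t$ of $\Delta_{\mathcal M,p}(m)$ for \emph{every} $p\in\mathbb Z$. The axioms only give $\Delta_{\mathcal M,p}(m)=0$ for $p\ll0$, so a priori you have an element of $\prod_p V_{p+1}\otimes M_t$, not of $V\otimes M_t$.

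\textbf{Why it cannot be repaired.}
\begin{enumerate}[label=(\roman*)]
\item The $L(1)$-derivative axiom gives $(L(1)\otimes\Id)x_{p+1}=-(p+1)x_p$. If only finitely many $x_p$ were nonzero, the largest such $p$ with $p\ge0$ would have to vanish. So finiteness forces $\Delta_{\Omega^k(\mathcal M)}m\in\bigoplus_{s\le0}V_s\otimes M_t$.
\item This matches the fact that $C^k(V)\subseteq\bigoplus_{s\le0}V_s$. Indeed, if $v_A$ is the top component of $v$, the weight-$A$ component of $L(1)v+L(0)v$ is $Av_A$.
\item Yet $x_1$ is typically nonzero. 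Take $V=\mathcal V'$ and $\mathcal M=\mathcal N'$ as in Proposition~\ref{VC-VA}, with $L(0)$ acting on $N_0$ by a scalar $h\neq0$ (e.g.\ a Virasoro module $L(c,h)$). Then $x_1$ pairs with $\omega\otimes n$ to $(m,\omega_1n)=h\,(m,n)$.
\end{enumerate}
In this example $\dim C^k(V)<\infty$ holds. Still, $\Delta_{\Omega^k(\mathcal M)}m$ is an infinite sum and is not in $C^k(V)\otimes\Omega^k(\mathcal M)$. So the statement cannot be proved as formulated.

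What your Steps 2 and 3 actually establish, read in the completion, is that the first factors are annihilated by $O_k(V')$. That places them in $A_k(V')^*\subseteq(V')^*=\prod_sV_s$, not in $C^k(V)\subseteq V$. A correct version needs $A_k(V')^*$ (when finite-dimensional) in place of $C^k(V)$, or an extra finiteness hypothesis on the modes.

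\textbf{Smaller corrections to the computation.}
\begin{enumerate}[label=(\alph*)]
\item In Step 2 the factor $(1+z_1)^{L(0)+k}$ becomes $z_1^{L(0)+k}$. The exponents are therefore $L(0)-k-2-i$, not $L(0)-2k-2-i$. They are still $<-k-1$, so the vanishing survives.
\item In Step 3, not everything is killed by your first step. In the first Jacobi term, write $j=l+i$; the terms with $1\le j\le t$ survive. They cancel because $\binom{k+l}{l}\binom{k+j}{j-l}=\binom{k+j}{k}\binom{j}{l}$ and $\sum_{l=0}^{j}(-1)^l\binom{j}{l}=0$. This needs $j\le t\le k$, so that $l$ runs over all of $0,\dots,j$. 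The Pascal-type identity from the filtration proposition plays no role here.
\item In the $L(1)$ step the operator is $z^{L(0)}L(1)=L(1)z^{L(0)-1}$, not $z^{L(0)-1}L(1)$.
\end{enumerate}
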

\proof  This is similar to section 4, we omit it.                                       $\hfill\Box$

\begin{Proposition}\label{Prop7.4}
Let $V$ be a graded vertex operator coalgebra. Suppose $\dim C^k(V)<\infty$.

(i) There is a well-defined surjective homomorphism $\Phi^k:A_k(V')\rightarrow C^k(V)^*$ of algebras, where $A_k(V')$ is the higher level Zhu algebra of vertex operator algebra $V'$.

(ii) Suppose $\dim A_k(V')<\infty$, there is a well-defined epimorphism $\Psi^k:C^k(V)\rightarrow A_k(V')^*$ of coalgebras.

(iii) If $A_k(V')$ is finite-dimensional, we have $\Phi^k,\Psi^k$ are isomorphisms.
\end{Proposition}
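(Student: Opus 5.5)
The plan is to mirror Proposition \ref{Prop5.2A-C}, replacing Lemma \ref{Lm5.1} by its level-$k$ analogue. Recall that the higher level Zhu algebra (of \cite{DLM1}) is $A_k(V')=V'/O_k(V')$, where $O_k(V')$ is spanned by the elements
$$f\circ_k g=\Res_z\frac{(1+z)^{\wt f+k}}{z^{2k+2}}Y(f,z)g$$
together with $(L(-1)+L(0))f$. The product is
$$f*_k g=\sum_{l=0}^k(-1)^l\tbinom{k+l}{l}\Res_z\frac{(1+z)^{\wt f+k}}{z^{k+l+1}}Y(f,z)g.$$

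The first step is the level-$k$ version of Lemma \ref{Lm5.1}: $v\in C^k(V)$ if and only if $(h,v)=0$ for all $h\in O_k(V')$. For the $f\circ_k g$ part, the pairing computation of Lemma \ref{Lm5.1} goes through verbatim. We use $(Y(f,z)g,v)=(f\otimes g,\Yup(z)v)$ and the fact that $(1+z)^{\wt f}$ corresponds to $(1+z)^{L(0)}$ on the first tensor factor, which gives
$$(f\circ_k g,v)=\Big(f\otimes g,\Res_z\Big(\frac{(1+z)^{L(0)+k}}{z^{2k+2}}\otimes\Id\Big)\circ\Yup(z)v\Big).$$
For the other generators, we need that $L(-1)$ on $V'$ is the adjoint of $L(1)$ on $V$, and $L(0)$ is self-adjoint under the pairing. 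This should follow from Proposition \ref{VC-VA}(ii), since $\omega_{V'}=\rho|_{V_2}$ and $(\rho\otimes\Id)\circ\Yup(z)=\sum L(k)z^{k-2}$. Granting it, $((L(-1)+L(0))f,v)=(f,(L(1)+L(0))v)$, which accounts for the second defining condition of $C^k(V)$. For the converse, if $v\notin C^k(V)$, then one of the two conditions fails, and a nonzero vector in $V\otimes V$ (resp. $V$) can be detected by some $f\otimes g$ (resp. some $f$) in the graded dual, because the graded pieces are finite dimensional.

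Step two proves (i). Compose $V'\to V^*\to C^k(V)^*$; by step one its kernel contains $O_k(V')$, so we get a map $\Phi^k$ on $A_k(V')$. Multiplicativity follows exactly as before: by the same pairing identity, $(f*_k g,v)=(f\otimes g,\Delta^k(v))$, and this equals $(\Phi^k(f)\Phi^k(g),v)$ by Proposition \ref{A-C}(i). The unit goes to $c$ because $\1_{V'}=c|_{V_0}$. For surjectivity, finite dimensionality of $C^k(V)$ places it inside $\oplus_{i\le N}V_i$, and a functional on a finite dimensional subspace of the finite dimensional space $\oplus_{i\le N}V_i$ extends to an element of $V'$.

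Step three proves (ii) and (iii). When $\dim A_k(V')<\infty$, identify $A_k(V')^*$ with $\{v\in(V')^*: (h,v)=0\ \forall h\in O_k(V')\}$. Then $V\hookrightarrow (V')^*$, restricted to $C^k(V)$, lands in $A_k(V')^*$ by step one. This gives $\Psi^k$, and the coalgebra compatibility is the dual of step two. Surjectivity of $\Psi^k$ is the obstacle: we must show that every functional on $A_k(V')$ comes from a vector in $V$. Since $A_k(V')$ is finite dimensional, $O_k(V')$ contains all $V_i^*$ for $i$ large, as in Lemma \ref{Lm5.3}. So $A_k(V')$ is a quotient of the finite dimensional space $\oplus_{i\le N}V_i^*$, whose annihilator-dual is realized inside $\oplus_{i\le N}V_i$, and this gives surjectivity. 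This same argument shows $\dim C^k(V)<\infty$. Finally, $(\Psi^k)^*=\Phi^k$, so injectivity of $\Phi^k$ (from surjectivity of $\Psi^k$) combined with its surjectivity gives that both maps are isomorphisms.

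The main point requiring care is the adjointness claim for $L(-1)$ and $L(1)$ and the correct matching of the $O_k$ generators with the defining conditions of $C^k(V)$.
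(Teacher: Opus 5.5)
You follow the route the paper intends (its proof just says ``similar to Section 5''). The parts you handle correctly are the pairing computation for $f\circ_k g$, the well-definedness, multiplicativity and surjectivity of $\Phi^k$, and the well-definedness of $\Psi^k$. The adjointness you ``grant'' also holds: $(Y(\omega_{V'},z)g,v)=(\rho\otimes g,\Yup(z)v)$ gives $(L_{V'}(n)g,v)=(g,L(-n)v)$.

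The gap is the surjectivity of $\Psi^k$, the step you flagged yourself. The inference ``$\dim A_k(V')<\infty$, hence $V_i^*\subseteq O_k(V')$ for $i\gg0$'' does not follow. $O_k(V')$ is not a graded subspace of $V'=\oplus_iV_i^*$, and a subspace of finite codimension need not contain any homogeneous piece (consider the kernel of a functional that is nonzero on every $V_i^*$). What surjectivity of $\Psi^k$ actually requires is that every functional on $V'$ vanishing on $O_k(V')$, which is an element of $(V')^*=\prod_iV_i$, has only finitely many nonzero components. That is false in general.

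Here is a counterexample. Take $V'=L(\tfrac12,0)$ (the Ising VOA), $V$ its restricted dual vertex operator coalgebra, and $k=0$, so $C^0(V)=C(V)$ and $A_0(V')=A(V')\cong\mathbb{C}[x]/(x(x-\tfrac12)(x-\tfrac1{16}))$ with $x=[\omega]$. For homogeneous $u$ one has $\omega*u=L(-2)u+2L(-1)u+L(0)u$, hence $[L(-2)u]=([\omega]+\wt u)*[u]$ and $[L(-2)^n\1]=\prod_{j=0}^{n-1}([\omega]+2j)$.

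Let $\lambda$ be the character $V'\to A(V')\to\mathbb{C}$ given by $x\mapsto\tfrac12$. It takes the value $\prod_{j=0}^{n-1}(\tfrac12+2j)\neq0$ on $L(-2)^n\1\in V'_{2n}$ for every $n$. Consequences:
\begin{itemize}
\item $V'_{2n}\not\subseteq O(V')$ for all $n$.
\item $\lambda\in A(V')^*$ is not in the image of $V=\oplus_iV_i\to(V')^*$, so $\Psi^0$ is not surjective.
\item $\Phi^0=(\Psi^0)^*$ is therefore not injective. In fact $C(V)=\mathbb{C}\1^*$, because a combination of the characters at $\tfrac12$ and $\tfrac1{16}$ that vanishes on $L(-2)^n\1$ for all large $n$ must be zero. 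Meanwhile $\dim A(V')=3$.
\end{itemize}
So parts (ii) and (iii) are false as stated. The paper's Lemma 5.3 makes exactly the same claim, and its omitted proof rests on it, so you have reproduced the paper's argument including its flaw.

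What survives is the following. $\Psi^k$ is injective, since $V\to(V')^*$ is, so $\dim C^k(V)\le\dim A_k(V')$; use this in place of your argument for $\dim C^k(V)<\infty$. $\Phi^k$ is surjective. An isomorphism needs an extra hypothesis such as $\oplus_{i>N}V_i^*\subseteq O_k(V')$ for some $N$, which is precisely what your step assumed. With that hypothesis, your argument together with the converse half of your step one does give surjectivity of $\Psi^k$.
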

\proof  This is similar to section 5, we omit it.                                       $\hfill\Box$

\begin{Proposition}
Suppose $\dim C^k(V)<\infty$ for all $k\in\mathbb{N}$.
Let $M$ be a $C^k(V)$-comodule which is not a $C^{k-1}(V)$-comodule. Then there is an admissible $V$-comodule $\mathcal {L}^k(M)$ such that $\Omega^k/\Omega^{k-1}\circ\mathcal {L}^k(M)=M.$

Furthermore, $\mathcal {L}^k$ is a functor from the category of $C^k(V)$-comodules which are not $C^{k-1}(V)$-comodules to admissible $V$-comodules category such that $\Omega^k/\Omega^{k-1}\circ\mathcal {L}^k=\Id$, it also sends simple objects to simple objects.
\end{Proposition}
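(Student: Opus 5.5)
The plan is to run the construction of Section 6 one level higher, using the higher level Zhu theory of \cite{DLM1} for $V'$ where Section 6 used $A(V')$.

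Let $M$ be a $C^k(V)$-comodule that is not a $C^{k-1}(V)$-comodule. By Proposition \ref{A-C}(iii), $M^*$ is a left $C^k(V)^*$-module. Composing with the surjective algebra homomorphism $\Phi^k:A_k(V')\rightarrow C^k(V)^*$ of Proposition \ref{Prop7.4}(i) makes $M^*$ an $A_k(V')$-module. The hypothesis that $M$ is not a $C^{k-1}(V)$-comodule should translate, dually, into the statement that $M^*$ does not factor through the natural epimorphism $A_k(V')\rightarrow A_{k-1}(V')$. I would verify this with the filtration $C^{k-1}(V)\subseteq C^k(V)$ from the previous Proposition together with the compatibility of $\Phi^k$ and $\Phi^{k-1}$. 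That compatibility comes from the computation showing $\Delta^{k}|_{C^{k-1}(V)}=\Delta^{k-1}$.

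Next, Dong--Li--Mason's construction in \cite{DLM1} applies to such an $A_k(V')$-module and produces an admissible $V'$-module $L_k(M^*)$ with $L_k(M^*)_k=M^*$. It is built exactly like $L(M^*)$ above: induce from $\widehat{V'}(0)$-action on the degree $k$ piece, then quotient by the radical $J$ with respect to $M^{**}$. Its restricted dual $L_k(M^*)'$ is an admissible $V$-comodule by Proposition \ref{VC-VA}(iii). The map $M\hookrightarrow M^{**}=(L_k(M^*)_k)^*$ embeds $M$ in degree $k$. I then define $\mathcal{W}^k(M)$ as the subcomodule generated by $M$, and $\mathcal{L}^k(M)=\mathcal{W}^k(M)/\mathcal{J}$, where $\mathcal{J}$ is the maximal subcomodule meeting $M$ trivially.

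The key step is to show $\mathcal{L}^k(M)_k=M$, so that $\Omega^k/\Omega^{k-1}(\mathcal{L}^k(M))=M$. I would copy the filtration argument of Lemma \ref{Lm6.1}, with $F^0=M$ and $F^{j+1}$ spanned by the second tensor factors. The base case uses the identity $(\Delta_i(m),f\otimes m^*)=(m,\varrho_{M^*}(f\otimes m^*))$ for the degree-$k$ action $f\otimes t^{\wt f-1}$, combined with the fact that $\Delta_{\Omega^k}$ on degree $k$ recovers $\Delta_M$. The inductive step uses weak coassociativity (Proposition \ref{WCM}) as before. This is the main obstacle. In degrees $<k$, the components of $\mathcal{W}^k(M)$ need not vanish, so identifying $\Omega^k/\Omega^{k-1}$ requires care: one must check that the degree-$k$ part is exactly $M$ and that the quotient by degrees $\le k-1$ produces the $C^k(V)$-comodule $M$. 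I would try to handle lower degrees by noting that $\mathcal{J}$ kills any subcomodule generated in degrees $<k$ that misses $M$.

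Functoriality proceeds as in the Proposition of Section 6. A comodule map $\psi$ dualizes to $\psi^*$, an $A_k(V')$-module map, which induces a map $L_k((M^2)^*)\rightarrow L_k((M^1)^*)$ by the universal property; dualizing again and restricting gives $\mathcal{L}^k(\psi)$. For simplicity, take a nonzero subcomodule $\mathcal{N}\subseteq\mathcal{L}^k(M)$ with $M$ simple. If $\mathcal{N}_k\neq0$, then $\mathcal{N}_k$ is a $C^k(V)$-subcomodule of $M$, hence equals $M$, and since $\mathcal{L}^k(M)$ is generated by $M$ we get $\mathcal{N}=\mathcal{L}^k(M)$. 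If $\mathcal{N}_k=0$, then $\mathcal{N}$ meets $M$ trivially, so it lies in the image of $\mathcal{J}$ and is therefore zero.
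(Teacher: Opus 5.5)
Your proposal follows the paper's approach: the paper's proof only says the argument is ``similar to section 6'', and you carry out exactly that transport to level $k$ via $\Phi^k$ and the Dong--Li--Mason module $L_k(M^*)$ with $M$ placed in degree $k$, relying on $L_k(M^*)_k=M^*$ as an $A_k(V')$-module just as the paper relies on $L(M^*)_0=M^*$. The lower-degree ``obstacle'' you flag is not a real difficulty, so the $\mathcal{J}$ device is unnecessary. First, the coaction $\Res_z(z^{L(0)-1}\otimes\Id)\circ\Yup_{\mathcal M}(z)$ preserves degree, so $\Omega^k/\Omega^{k-1}(\mathcal{L}^k(M))$ is simply $\mathcal{L}^k(M)_k$. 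Second, the weak-coassociativity induction of Lemma \ref{Lm6.1} never uses the degrees of the intermediate elements. It therefore reduces every degree-$k$ element of $\mathcal{W}^k(M)$, including those reached through degrees below $k$, to a degree-$k$ component of a single $\Delta_l(m)$ with $m\in M$, which your base case already places in $M$.
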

\proof  This is similar to section 6, we omit it.                                       $\hfill\Box$

\begin{Theorem}\label{7.5}
If $V$ is a corational graded vertex operator coalgebra such that $\dim C^k(V)<\infty$, then $C^k(V)$ is a cosemisimple coassociative coalgebra.
\end{Theorem}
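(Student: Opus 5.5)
The plan is to mirror the proof of the level-zero statement in Section 6, now using the higher-level machinery of Section 7. By Lemma \ref{coss}, it suffices to show that every left $C^k(V)$-comodule $M$ is cosemisimple. Since $C^k(V)$ is finite dimensional, every comodule is locally finite, i.e.\ a union of finite-dimensional subcomodules. So it is enough to treat finite-dimensional $M$ and then pass to sums in the standard way: a comodule that is a sum of simple subcomodules is a direct sum of simple ones.

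Given such an $M$, I would first use the filtration $C^0(V)\subseteq\cdots\subseteq C^k(V)$ to split off the part that is already a $C^{k-1}(V)$-comodule. Concretely, I would argue by induction on $k$. The base case $k=0$ is the theorem of Section 6. For the inductive step, I reduce to a comodule $M$ on which $\Omega^k/\Omega^{k-1}\circ\mathcal{L}^k$ applies, i.e.\ one that is not a $C^{k-1}(V)$-comodule.

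I then form the admissible $V$-comodule $\mathcal{L}^k(M)$. This is built from the $A_k(V')$-module $M^*$ via $\Phi^k$ of Proposition \ref{Prop7.4} and the higher-level Zhu construction of \cite{DLM1}, followed by restricted duals and a subquotient. By corationality, $\mathcal{L}^k(M)=\oplus_j \mathcal{M}^j$ with each $\mathcal{M}^j$ a simple admissible $V$-comodule. The functor $\Omega^k$ sends this decomposition to $\oplus_{i\le k}\mathcal{L}^k(M)_i=\oplus_j\Omega^k(\mathcal{M}^j)$, a direct sum of $C^k(V)$-comodules, and the same holds for $\Omega^{k-1}$. Hence $M=\Omega^k/\Omega^{k-1}(\mathcal{L}^k(M))$ is the direct sum of the quotients $\Omega^k(\mathcal{M}^j)/\Omega^{k-1}(\mathcal{M}^j)$.

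It remains to show each of these pieces is cosemisimple. For a simple admissible $V$-comodule $\mathcal{M}^j$, I would show that $\Omega^k(\mathcal{M}^j)$ is a simple or zero $C^k(V)$-comodule modulo the lower levels. I would do this by dualizing: $(\mathcal{M}^j)'$ is a simple admissible $V'$-module by Proposition \ref{VC-VA}, so each homogeneous piece $(\mathcal{M}^j)_i^*$ is a simple $A_k(V')$-module or zero by \cite{DLM1}. Through $\Phi^k$, the dual $C^k(V)$-comodule $\mathcal{M}^j_i$ is then simple.

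The main obstacle is the last step, together with making the quotient $\Omega^k/\Omega^{k-1}$ precise as a $C^k(V)$-comodule. The intended fix is to work degree by degree. The comodule structure $\Res_z(z^{L(0)-1}\otimes\Id)\circ\Yup_\mathcal{M}(z)$ preserves each $M_i$ by the degree computation in Theorem \ref{CM0}, so each $M_i$ with $i\le k$ is itself a $C^k(V)$-subcomodule. Then $\Omega^k(\mathcal{L}^k(M))$ decomposes as a direct sum over degrees and over $j$ of the pieces $\mathcal{M}^j_i$. Each piece is simple, and $M$ is identified with the degree-$k$ part, hence a direct sum of simples. If this degree-preservation fails to hold as stated, the fallback is to dualize everything to $A_k(V')$-modules and invoke the semisimplicity of $A_k(V')$ for rational $V'$. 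That route needs rationality of $V'$, which I would obtain by dualizing the corational decomposition via Proposition \ref{VC-VA}.
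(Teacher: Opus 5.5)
Your core argument is the one the paper intends: its proof of this theorem is just ``similar to Section~6'', namely apply $\mathcal{L}^k$, use corationality, and read off the degree-$k$ piece. Your check that each $\mathcal{M}^j_k$ is simple supplies a step the paper leaves implicit.

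\textbf{The gap.} The step that fails as written is the preliminary reduction, where you ``split off the part that is already a $C^{k-1}(V)$-comodule'' and feed the rest to $\mathcal{L}^k$. Induction only tells you that the largest such subcomodule $M^\flat=\{m\in M\mid \Delta_M(m)\in C^{k-1}(V)\otimes M\}$ is cosemisimple. It gives no $C^k(V)$-stable complement of $M^\flat$ in $M$. Having such complements is exactly what cosemisimplicity of $C^k(V)$ asserts, so the splitting is circular. Passing to $M/M^\flat$ does not help either. That quotient can again contain $C^{k-1}(V)$-subcomodules, and an extension of cosemisimple comodules is cosemisimple only if it splits, which nothing in your plan controls.

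\textbf{The repair.} Do not split at all. Take the hypothesis of the $\mathcal{L}^k$ proposition literally: the coaction of $M$ does not factor through $C^{k-1}(V)\otimes M$. Then you have an honest dichotomy. Either $\Delta_M(M)\subseteq C^{k-1}(V)\otimes M$, in which case the $C^{k-1}(V)$- and $C^k(V)$-subcomodules of $M$ coincide and induction finishes. Or $\mathcal{L}^k$ applies to $M$ itself, and your argument runs unchanged. Alternatively, apply $\mathcal{L}^k$ to every $M$, as the paper implicitly does. If the hypothesis is instead meant as ``$M$ has no nonzero $C^{k-1}(V)$-subcomodule'', the dichotomy is not exhaustive, and the splitting you assume is precisely what would remain to be proved.

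\textbf{Simplicity of $\mathcal{M}^j_k$.} Proposition~\ref{VC-VA} only makes $(\mathcal{M}^j)'$ an admissible $V'$-module; it says nothing about simplicity. The annihilator argument $\mathcal{N}\mapsto\mathcal{N}^\perp$ gives simplicity only when the homogeneous spaces are finite-dimensional. You can bypass duals and the higher-level Zhu theory entirely:
\begin{enumerate}[label=(\roman*)]
\item Let $0\neq N\subseteq\mathcal{M}^j_k$ be a $C^k(V)$-subcomodule. The weak coassociativity argument of Lemma~\ref{Lm6.1} shows that the subcomodule $\mathcal{W}(N)$ generated by $N$ is spanned by the second tensor factors of the $\Delta_{\mathcal{M},l}(x)$, for $x\in N$ and $l\in\mathbb{Z}$.
\item The degree-$k$ part of $\mathcal{W}(N)$ is spanned by the degree-$k$ second factors. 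These lie in $N$, because $N$ is stable under $\Res_z(z^{L(0)-1}\otimes\Id)\circ\Yup_{\mathcal{M}}(z)$ and the $l$-components have first factors in the distinct spaces $V_{l+1}$.
\item Hence $\mathcal{W}(N)_k=N$, and simplicity of $\mathcal{M}^j$ forces $N=\mathcal{M}^j_k$.
\end{enumerate}

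\textbf{The fallback.} Drop it. The paper deduces rationality of $V'$ from this very theorem, so using that rationality here is circular. Also, dualizing a decomposition of $\mathcal{N}'$ does not return one of $\mathcal{N}$ when the homogeneous spaces are infinite-dimensional. It is not needed anyway: the residue computation of Theorem~\ref{CM0}, run for $m\in M_t$, shows that $\Res_z(z^{L(0)-1}\otimes\Id)\circ\Yup_{\mathcal{M}}(z)$ maps $M_t$ into $V\otimes M_t$.
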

\proof  This is similar to section 6, we omit it.                                       $\hfill\Box$

\begin{Lemma}\cite{DLM1}\label{7.6}
 $\mathcal {V}$ is a rational vertex operator algebra if and only if all its Zhu algebras $A_k(\mathcal {V})$ are finite-dimensional semisimple associative algebras.
\end{Lemma}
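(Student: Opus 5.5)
The plan is to follow Dong--Li--Mason and prove the two implications separately. Throughout we use two tools for a vertex operator algebra $\mathcal {V}$. The first is the pair of functors $\Omega_n(\mathcal {N})=\{w\in\mathcal {N}\mid v_tw=0 \text{ for homogeneous } v \text{ and } \wt v-t-1<-n\}$, which sends an admissible $\mathcal {V}$-module to an $A_n(\mathcal {V})$-module. The second is the induced module $\bar M_n(U)$ (respectively $L_n(U)$) attached to an $A_n(\mathcal {V})$-module $U$, built exactly like $L(M^*)$ in Section 6 but with $U$ placed in degree $n$. Two facts about them are needed: $\Omega_n/\Omega_{n-1}(L_n(U))\cong U$, and $L_n(U)$ is irreducible when $U$ is a simple $A_n(\mathcal {V})$-module that does not factor through $A_{n-1}(\mathcal {V})$. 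We also use that $A_{n-1}(\mathcal {V})$ is a quotient of $A_n(\mathcal {V})$, which gives the filtration dual to the one in Section 7.

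($\Rightarrow$) Assume $\mathcal {V}$ is rational. First, rationality forces finitely many irreducible admissible modules, each with finite-dimensional homogeneous spaces. We get this by applying complete reducibility to $\bigoplus_U L_0(U)$ together with the Zhu correspondence between irreducibles and simple $A(\mathcal {V})$-modules. We also argue that a graded piece of infinite dimension would give, via $\bar M_n$ of a suitable quotient, an admissible module that is not completely reducible.

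Next, let $U$ be any $A_n(\mathcal {V})$-module. The admissible module $\bar M_n(U)$ is completely reducible, so $\Omega_n(\bar M_n(U))$ is a direct sum of simple $A_n(\mathcal {V})$-modules. Since $U$ is a quotient of this space, $U$ is semisimple. Hence every $A_n(\mathcal {V})$-module is semisimple, and in particular the regular module is. A unital semisimple ring is then a finite direct sum of minimal left ideals. Each minimal left ideal is a simple $A_n(\mathcal {V})$-module, hence of the form $\bigoplus_{i\le n}N_i$ for an irreducible admissible $N$, and so is finite-dimensional. Therefore $A_n(\mathcal {V})$ is finite-dimensional and semisimple.

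($\Leftarrow$) Assume every $A_n(\mathcal {V})$ is finite-dimensional semisimple, and let $\mathcal {N}$ be admissible. It suffices to show that every submodule generated by a single homogeneous vector $w\in N_n$ is a sum of irreducible submodules, since complete reducibility then follows by the standard Zorn argument. We may shift so that $w$ lies in $\Omega_n(\mathcal {N})$, the lowest part relevant to it. Then $U=A_n(\mathcal {V})w$ is a finite-dimensional semisimple module, $U=\bigoplus U_j$ with each $U_j$ simple. The submodule $\langle w\rangle$ is a quotient of $\bar M_n(U)=\bigoplus\bar M_n(U_j)$. Each $U_j$ factors through $A_m(\mathcal {V})$ for a minimal $m\le n$ and does not factor through $A_{m-1}(\mathcal {V})$, so we may replace $n$ by $m$ for that summand.

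The main obstacle is to show that $\bar M_m(U_j)$ has $L_m(U_j)$ as its only irreducible quotient, and that every quotient of $\bar M_m(U_j)$ generated by its degree-$m$ piece is actually $L_m(U_j)$. The second claim amounts to showing that the kernel of the map onto $\langle w\rangle$ contains the maximal submodule meeting $U_j$ trivially. I would try the following first. Take a submodule $W\subset\bar M_m(U_j)$ with $W\cap U_j=0$ and look at its lowest nonzero degree $p<m$. The space $\Omega_p(W)$ is an $A_p(\mathcal {V})$-module, hence semisimple, and by the $\Omega/L$ correspondence it would produce a composition factor whose $A_m$-type is incompatible with $\bar M_m(U_j)$ being generated by $U_j$. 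Using semisimplicity of $A_p(\mathcal {V})$ for all $p\le m$, one shows that such a $W$ splits off, so $\bar M_m(U_j)$ is a direct sum of $L_m(U_j)$ and modules coming from lower levels. Induction on $m$ then handles those lower pieces. Consequently $\langle w\rangle$ is a sum of irreducibles $L_m(U_j)$, and $\mathcal {N}$ is completely reducible.
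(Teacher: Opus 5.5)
The paper gives no proof of this lemma; it simply quotes it from Dong--Li--Mason. Your outline follows their framework ($\Omega_n$, $\bar M_n$, $L_n$), but there are genuine gaps in both directions.

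In $(\Rightarrow)$ the semisimplicity step is fine. You have $U\cong\bar M_n(U)_n$, and $\Omega_n(N)=\bigoplus_{i\le n}N_i$ with each $N_i$ simple over $A_n(\mathcal V)$ for irreducible $N$. So $U$ is a direct summand of the semisimple $A_n(\mathcal V)$-module $\Omega_n(\bar M_n(U))$.

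Finite-dimensionality, however, is not established.
\begin{itemize}
\item Complete reducibility of $\bigoplus_U L_0(U)$ is automatic. It yields neither finitely many irreducibles nor finite-dimensional homogeneous spaces.
\item The claim that an infinite-dimensional graded piece would produce a non-completely-reducible module via some $\bar M_n$ has no argument behind it. No argument using only semisimplicity can work, since the field $\mathbb{C}(x)$ is a semisimple, infinite-dimensional algebra.
\end{itemize}
The missing ingredient is countability. $A_n(\mathcal V)$ is a quotient of $\mathcal V=\bigoplus_s\mathcal V_s$ with $\dim\mathcal V_s<\infty$. Hence in the Wedderburn decomposition $A_n(\mathcal V)\cong\bigoplus_i M_{k_i}(D_i)$ each division algebra $D_i$ has countable dimension. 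This forces $D_i=\mathbb{C}$: any $x\in D_i\setminus\mathbb{C}$ would be transcendental, and the elements $(x-\lambda)^{-1}$, $\lambda\in\mathbb{C}$, would be linearly independent. That gives $\dim A_n(\mathcal V)<\infty$ directly. Finiteness of the set of irreducibles is then a consequence, not an input. Also, a simple $A_n(\mathcal V)$-module is isomorphic to a single $N_i$ with $i\le n$, not to $\bigoplus_{i\le n}N_i$.

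In $(\Leftarrow)$ the decisive step is missing. There are three problems.
\begin{itemize}
\item The step ``replace $n$ by $m$'' is unjustified. Knowing that $A_n(\mathcal V)$ acts on $U_j\subset N_n$ through $A_m(\mathcal V)$ does not show that operators of degree $<-m$ kill $U_j$, i.e.\ that $U_j\subset\Omega_m(\mathcal N)$. So the universal property of $\bar M_m(U_j)$ is unavailable, and $\langle w\rangle$ is only known to be a quotient of $\bigoplus_j\bar M_n(U_j)$.
\item You treat only submodules $W$ with $W\cap U_j=0$ whose lowest degree is $p<m$. Such a $W$ can start in a degree $p>m$, and for $m=0$ this is the only possibility. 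Positive-degree singular vectors in the generalized Verma module $\bar M_0(U)$ are exactly what must be excluded.
\item The ``incompatible $A_m$-type'' and ``$W$ splits off'' assertions have no proof, and the stated outcome is incoherent. $\bar M_m(U_j)$ is generated by its simple degree-$m$ piece and $L_m(U_j)_m\cong U_j$, so any decomposition $\bar M_m(U_j)=L_m(U_j)\oplus X$ forces $X=0$.
\end{itemize}
Assuming every $A_p(\mathcal V)$ is semisimple, your route needs two facts:
\begin{itemize}
\item a simple $A_n(\mathcal V)$-submodule of $N_n$ on which $A_n(\mathcal V)$ acts through $A_m(\mathcal V)$ already lies in $\Omega_m(\mathcal N)$;
\item no nonzero vector of $\bar M_m(U_j)$ in a degree $p\neq m$ is annihilated by all operators of degree $m-p$, i.e.\ $\bar M_m(U_j)$ is irreducible.
\end{itemize}
That is the real content of the converse, and your sketch proves neither.
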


\begin{Lemma}\cite{DNR}\label{7.7}
$C$ is a cosemisimple coassociative coalgebra if and only if its dual algebra $C^*$ is a semisimple associative algebra.
\end{Lemma}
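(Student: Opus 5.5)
The statement to prove is Lemma~\ref{7.7}: a coalgebra $C$ is cosemisimple if and only if $C^*$ is a semisimple algebra. As used in this paper (where $C^k(V)$ is assumed finite dimensional), I would prove it under the hypothesis $\dim C<\infty$. The plan is to pass through the comodule/module correspondence of Proposition~\ref{A-C} and then use Lemma~\ref{coss} on the coalgebra side together with the classical characterization on the algebra side: $A$ is semisimple if and only if every $A$-module is semisimple.

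\textbf{Step 1 (equivalence of categories).} For finite-dimensional $C$, I would show that left $C$-comodules are the same as suitable $C^*$-modules. Given a comodule $(M,\Delta_M)$, define $f\cdot m=\sum f(m')m''$ for $f\in C^*$. Coassociativity of $\Delta_M$ gives associativity of the action, and the counit identity gives $\epsilon\cdot m=m$, where $\epsilon=\eta_{C^*}(1)$ by Proposition~\ref{A-C}(i).

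Conversely, take a basis $\{c_i\}$ of $C$ with dual basis $\{c_i^*\}$. For a $C^*$-module $N$, set $\Delta_N(n)=\sum_i c_i\otimes c_i^*\cdot n$. The comodule axioms follow by pairing with $C^*\otimes C^*$ and using the product formula of Proposition~\ref{A-C}(i); here finite dimensionality is exactly what makes $C^*\otimes C^*=(C\otimes C)^*$. These two constructions are mutually inverse and preserve morphisms. They also identify subcomodules with submodules: a subspace $N^1\subseteq N$ satisfies $\Delta_N(N^1)\subseteq C\otimes N^1$ if and only if $c_i^*\cdot N^1\subseteq N^1$ for all $i$.

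\textbf{Step 2 (matching simplicity and semisimplicity).} By Step~1, simple comodules correspond to simple modules, and direct-sum decompositions correspond to each other. So a comodule is cosemisimple in the sense of Definition~\ref{DCM2.2} exactly when the corresponding $C^*$-module is semisimple.

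\textbf{Step 3 (conclusion).} Lemma~\ref{coss} says $C$ is cosemisimple if and only if every $C$-comodule is cosemisimple. By Step~2, this holds if and only if every $C^*$-module is semisimple. That in turn holds if and only if $C^*$ is semisimple, taking $C^*$ as a module over itself for one direction and, for the other, writing any module as a quotient of a free module.

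There is one point of care. Definition~\ref{DA2.2} phrases semisimplicity of an algebra as a direct sum of simple ideals, so I would invoke the Wedderburn–Artin theorem for the finite-dimensional algebra $C^*$ to identify this with "every module is semisimple." Similarly, Definition~\ref{DC2.2} phrases cosemisimplicity via simple subcoalgebras. Lemma~\ref{coss} is stated in the paper and bridges that definition on the coalgebra side; alternatively, one can dualize directly, since subcoalgebras $D\subseteq C$ correspond to ideals $D^\perp$ of $C^*$.

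\textbf{Main obstacle.} The real obstacle is Step~1, namely verifying that the comodule structure built from a module is coassociative. I would handle it by pairing both sides of the coassociativity identity against $f\otimes g\otimes m^*$ and reducing to $(fg)\cdot n=f\cdot(g\cdot n)$.
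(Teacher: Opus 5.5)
The paper gives no proof of this lemma; it only cites \cite{DNR}, so your argument has to stand on its own. Restricting to $\dim C<\infty$ is correct, and in fact necessary. For an infinite set $G$, the grouplike coalgebra $\mathbb{C}G$ is cosemisimple, but $(\mathbb{C}G)^*=\mathbb{C}^G$ has infinitely many idempotents, so it is not a finite direct sum of simple ideals. Since the paper applies the lemma only to the finite-dimensional $C^k(V)$, this is the right reading. Your overall strategy is sound, but Step~1 contains a concrete error, located exactly in the step you call the main obstacle.

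With the product of Proposition~\ref{A-C}(i), $(fg)(a)=\sum f(a')g(a'')$, coassociativity of a left comodule gives
\[
f\cdot(g\cdot m)=\sum g(m')\,f\bigl((m'')'\bigr)\,(m'')''=\sum (gf)(m')\,m''=(gf)\cdot m .
\]
So $f\cdot m=\sum f(m')m''$ makes $M$ a right $C^*$-module, not a left one.

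The same problem appears in the converse construction. For $\Delta_N(n)=\sum_i c_i\otimes c_i^*\cdot n$, pairing $(\Id\otimes\Delta_N)\circ\Delta_N(n)$ with $f\otimes g$ gives $g\cdot(f\cdot n)$. Pairing $(\Delta\otimes\Id)\circ\Delta_N(n)$ with $f\otimes g$ gives $(fg)\cdot n$. The reduction therefore lands on $(fg)\cdot n=g\cdot(f\cdot n)$, not on $(fg)\cdot n=f\cdot(g\cdot n)$. For a genuine left module this fails whenever $C^*$ is noncommutative: take the left regular module and $n=\epsilon$.

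The repair is routine. State Step~1 as an isomorphism between left $C$-comodules and right $C^*$-modules, using $m\cdot f=\sum f(m')m''$ and $\Delta_N(n)=\sum_i c_i\otimes n\cdot c_i^*$. This is consistent with Proposition~\ref{A-C}(iii), since the dual of a right module is a left module. Steps~2 and~3 then go through verbatim with right modules. The conclusion is unaffected: Definition~\ref{DA2.2} involves only two-sided ideals, which are the same for $C^*$ and $(C^*)^{\rm op}$, and Wedderburn--Artin applies equally to right modules.

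The alternative you mention at the end is actually a complete and shorter proof on its own. For finite-dimensional $C$, the map $D\mapsto D^\perp$ is an inclusion-reversing bijection between subcoalgebras of $C$ and two-sided ideals of $C^*$. The key identity is $(I\otimes C^*+C^*\otimes I)^\perp=I^\perp\otimes I^\perp$. A decomposition $C=\bigoplus_i D_i$ into subcoalgebras corresponds to $C^*=\bigoplus_i(\bigoplus_{j\neq i}D_j)^\perp$. Its $i$-th summand is isomorphic to $D_i^*$ as an algebra, and $D_i$ is simple exactly when $D_i^*$ is.

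This matches Definitions~\ref{DC2.2} and~\ref{DA2.2} literally. It needs neither Lemma~\ref{coss}, nor Wedderburn--Artin, nor any left/right bookkeeping. What your module route adds is an explanation through the comodule--module dictionary that the paper itself relies on in Section~6.
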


\begin{Theorem}
Let $V$ be a graded vertex operator coalgebra such that $\dim A_k(V')<\infty$ for all $k\in\mathbb{N}$. Then $V$ is corational if and only if $V'$ is a rational vertex operator algebra.
\end{Theorem}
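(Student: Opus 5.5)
The plan is to reduce both conditions to statements about the finite-level algebras and coalgebras, and then pass between them using Proposition \ref{Prop7.4} and Lemmas \ref{7.6} and \ref{7.7}. Since $\dim A_k(V')<\infty$ for every $k$, the argument of Lemma \ref{Lm5.3} applied at level $k$ gives $\dim C^k(V)<\infty$. Proposition \ref{Prop7.4}(iii) then shows that $\Phi^k:A_k(V')\rightarrow C^k(V)^*$ is an isomorphism of algebras for every $k\in\mathbb{N}$.

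For the direction ``$V$ corational $\Rightarrow$ $V'$ rational'', I apply Theorem \ref{7.5} to see that every $C^k(V)$ is cosemisimple. Lemma \ref{7.7} then says each $C^k(V)^*$ is semisimple, and transporting along $\Phi^k$, each $A_k(V')$ is a finite-dimensional semisimple algebra. Lemma \ref{7.6} now gives that $V'$ is rational.

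For the converse, suppose $V'$ is rational. By Lemma \ref{7.6} all $A_k(V')$ are finite-dimensional and semisimple, so via $\Phi^k$ and Lemma \ref{7.7} all $C^k(V)$ are cosemisimple. The remaining task is to show that every admissible $V$-comodule $\mathcal{M}$ is cosemisimple, and I would argue by duality rather than with the functors $\mathcal{L}^k$. By Proposition \ref{VC-VA}(iv), $\mathcal{M}'=\oplus_s M_s^*$ is an admissible $V'$-module. Rationality of $V'$ then gives $\mathcal{M}'=\oplus_\alpha N^\alpha$ with each $N^\alpha$ irreducible and graded. Because $A(V')$ is finite dimensional, there are only finitely many irreducibles up to isomorphism, and each of them has finite-dimensional homogeneous pieces. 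Consequently each $M_s^*$ is the finite direct sum $\oplus_\alpha N^\alpha_s$.

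Dualizing degreewise, I set $\mathcal{M}^\alpha=\oplus_s (N^\alpha_s)^{\perp\text{-complement}}$. Concretely, $\mathcal{M}^\alpha$ is the annihilator in $\mathcal{M}$ of $\oplus_{\beta\neq\alpha}N^\beta$. The comodule identity $(Y_{\mathcal{M}'}(f,z)m^*,m)=(f\otimes m^*,\Yup_{\mathcal{M}}(z)m)$ shows that annihilators of $V'$-submodules are sub comodules. This gives $\mathcal{M}=\oplus_\alpha\mathcal{M}^\alpha$ componentwise on each finite-dimensional $M_s$, and each $\mathcal{M}^\alpha$ is simple, since a proper sub comodule would produce, via annihilators, a proper submodule of $N^\alpha$.

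The main obstacle I expect is this last dualization step. Infinite direct sums do not dualize to direct sums, so I have to check degreewise that the annihilator decomposition is still a direct sum of sub comodules. Two ingredients need care here: the finite-dimensionality of each $M_s$ (or at least local finiteness), and the correspondence between sub comodules of $\mathcal{M}$ and closed graded submodules of $\mathcal{M}'$. If $M_s$ is not finite dimensional, I would instead work with the finite-dimensional sub comodule generated by a single vector, decompose that, and use a Zorn's-lemma argument to conclude that $\mathcal{M}$ is a sum of simple sub comodules.
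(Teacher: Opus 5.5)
Your forward direction is the paper's argument: finiteness of $C^k(V)$ as in Lemma \ref{Lm5.3}, then Theorem \ref{7.5}, Lemma \ref{7.7}, the isomorphisms $\Phi^k$ of Proposition \ref{Prop7.4}, and Lemma \ref{7.6}.

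For the converse you take a genuinely different route. The paper deduces that every $C^k(V)$ is cosemisimple and runs a socle argument through the functors $\mathcal{L}^k$. You instead decompose the $V'$-module $\mathcal{M}'$ and pull the decomposition back by annihilators. When every $M_s$ is finite dimensional this is sound. Annihilators of graded submodules are sub comodules, and the degreewise pairing is perfect, so $M_s=\oplus_\alpha\mathcal{M}^\alpha_s$; moreover $(\mathcal{M}^\alpha)'\cong N^\alpha$ gives simplicity. One correction: the finiteness of $\oplus_\alpha N^\alpha_s$ comes from $\dim M_s^*<\infty$. Having finitely many isomorphism types says nothing about multiplicities.

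\textbf{The gap is the general case, and it is not optional.} Admissible comodules in the paper carry no finiteness condition on $M_s$, and corationality quantifies over all of them. Your fallback rests on ``the finite-dimensional sub comodule generated by a single vector''.
\begin{itemize}
\item Taken literally this is false, since such sub comodules are typically infinite dimensional.
\item The intended statement is that the sub comodule $\mathcal{W}$ generated by $m\in M_t$ has finite-dimensional homogeneous pieces. This is not automatic, since $\mathcal{W}_s$ is spanned by second tensor factors of $\Delta_{\mathcal{M},k}(m)$ for infinitely many $k$.
\item It actually fails without rationality. Take $V'=M(1)$, the rank-one Heisenberg vertex operator algebra, and $\mathcal{M}=N'$ with $N=\oplus_{j\geq 0}M(1,j)$, graded so that all top levels lie in degree $0$. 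Then $M_0=\prod_j\mathbb{C}$, and $(1,1,\dots)$ generates a sub comodule whose degree-$0$ part contains $(j^r)_j$ for every $r\geq 0$, because $h(0)$ acts on the top level of $M(1,j)$ by $j$.
\end{itemize}
So this local finiteness is exactly the missing lemma, and it must be extracted from rationality of $V'$.

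One way to prove it:
\begin{itemize}
\item Let $K$ be the sum of all graded submodules of $\mathcal{M}'$ contained in $\ker(\cdot,m)$. Then $K^\perp$ is a sub comodule containing $m$, and $(K^\perp)_s$ embeds in $\bigl((\mathcal{M}'/K)_s\bigr)^*$.
\item In the completely reducible module $\mathcal{M}'/K$, no nonzero submodule is killed by the induced functional. That functional lives in degree $t$, so only irreducible summands with top degree at most $t$ occur.
\item Group these summands as $L\otimes U$ by isomorphism type and top degree. There are finitely many groups, and each $L$ has finite-dimensional homogeneous spaces.
\item Write the functional on $L_t\otimes U$ as $\sum_{j\leq\dim L_t}\lambda_j\otimes\mu_j$. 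The submodule $L\otimes\bigcap_j\ker\mu_j$ is killed, hence $\dim U\leq\dim L_t$.
\end{itemize}
With this lemma your finite case applies to $K^\perp$. The standard argument that a sum of simple sub comodules is a direct sum of some of them then finishes the proof.

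As for what each route buys: the paper applies cosemisimplicity of $C^k(V)$ directly to $M_k$, which needs no finiteness of $M_k$. Yours, once local finiteness is supplied, avoids the functors $\mathcal{L}^k$ and the socle and unique-quotient reasoning entirely. It uses only Proposition \ref{VC-VA}(iv) and rationality of $V'$.
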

\proof By Proposition \ref{Prop7.4}, Theorem \ref{7.5}, Lemma \ref{7.6} and \ref{7.7}, it is obvious that if $V$ is corational, then $V'$ is rational.

Conversely, it is enough to show if all $C^k(V)$ are cosemisimple, then $V$ is corational. For any admissible $V$-comodule $\mathcal {M}=\oplus_{i\in\mathbb{N}}M_i$, let $\soc(\mathcal {M})$ be the sum of all simple sub comodules of $\mathcal {M}$. If $\soc(\mathcal {M})=\mathcal {M}$, we are done.

If not, there is $k\in\mathbb{N}$ such that $\soc(\mathcal {M})_k\neq M_k$. Suppose $k$ is the smallest positive integer such that $\soc(\mathcal {M})_k\neq M_k$. Since $C^k(V)$ is cosemisimple, we have $M_k$ is a cosemisimple $C^k(V)$-comodule, and $\soc(\mathcal {M})_k$ is a proper sub comodule of $M_k$. Thus there is at least one simple sub comodule $X_k$ of $M_k$ intersects $\soc(\mathcal {M})_k$ trivially. Suppose the irreducible admissible $V$-comodule corresponding $X_k$ is $\mathcal {Z}$.

Let $\mathcal {X}$ be the admissible sub $V$-comodule of $\mathcal {M}$ generated by $X_k$. If $\mathcal {X}$ is a simple admissible $V$-comodule, then $\mathcal {X}\subseteq\soc(\mathcal {M})$, this is impossible. Hence $\mathcal {X}$ is reducible. Now $\mathcal {Z}$ is the unique irreducible quotient of $\mathcal {X}$. If there is $l$, such that $X_l\neq Z_l$. Since $X_l$ is a cosemisimple $C^l(V)$-comodule, there is a $C^l(V)$-comodule $S_l$ such that $X_l=Z_l\oplus S_l$. Let $\mathcal {S}$ be the admissible sub $V$-comodule of $\mathcal {X}$ generated by $S_l$. Then we know $\mathcal {S}_k=0.$ Let $\mathcal {T}$ be an irreducible quotient of $\mathcal {S}$, then it is also an irreducible quotient of $\mathcal {X}$, which is different from $\mathcal {Z}$. This is a contradiction. Hence $\mathcal {X}=\mathcal {Z}$, and $\mathcal {X}$ is irreducible. This is also a contradiction, and this contradiction means $\soc(\mathcal {M})=\mathcal {M}$, i.e., $\mathcal {M}$ is completely reducible. Thus $V$ is corational.

This completes the proof.                                $\hfill\Box$

\begin{Remark}
This is a generalization of a classical result, i.e., $A$ is a semisimple associative algebra if and only if its dual coassociative coalgebra $A^*$ is cosemisimple.
\end{Remark}

\section{Relationship with Lie coalgebras}
In this section, we give a relation between Lie coalgebras and vertex operator coalgebras.

Let $U,V,W$ be three vector spaces, for any vector $u\otimes v\otimes w\in U\otimes V\otimes W$, define $\xi(u\otimes v\otimes w)=v\otimes w\otimes u.$ It is trivial that $$\xi=(\Id\otimes \tau)\circ(\tau\otimes \Id),~\xi^2=(\tau\otimes \Id)\circ(\Id\otimes \tau).$$

\begin{Definition}\label{LC5.1}\cite{M}
An Lie coalgebra is a vector space $L$ with a linear map $\Delta:L\rightarrow L\otimes L$, such that:

(i) $\tau\otimes\Delta=-\Delta.$

(ii) $(\Id_{L\otimes L\otimes L}+\xi+\xi^2)\circ(\Id_L\otimes \Delta)\circ\Delta=0.$
\end{Definition}

\begin{Proposition}\label{L5.2}
Let $(V,\Yup (z),c)$ be a graded vertex coalgebra. Set $\overline{V}=V/(\oplus_{i\neq1})V_i$, and define $\Delta:\overline{V}\rightarrow\overline{V}\otimes \overline{V}$ by $$\Delta(v)=\Res_z\Yup(z)v=\Delta_0(v).$$Then, $(\overline{V},\Delta)$ is an Lie coalgebra.
\end{Proposition}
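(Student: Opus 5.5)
The plan is to identify $\overline{V}$ with $V_1$ and read off both Lie coalgebra axioms from the two structural identities already available: skew-symmetry (Lemma \ref{L2.1}) and the Jacobi identity (\ref{JI3.4}).

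\emph{Setting up $\Delta$.} Let $\pi:V\rightarrow\overline{V}\cong V_1$ be the projection. Since $\Delta_0$ is homogeneous of degree $1$, for $v\in V_1$ we have $\Delta_0(v)\in(V\otimes V)_2$. The induced map is
$$\Delta(\bar v)=(\pi\otimes\pi)\circ\Delta_0(v),$$
which picks out the $V_1\otimes V_1$ component. This is well defined because we simply restrict $\Delta_0$ to the complement $V_1$ of the subspace we quotient by.

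\emph{Antisymmetry.} Take $\Res_z$ of $\tau\circ\Yup(z)=\Yup(-z)\circ\exp^{zD^*}$, where $D^*$ is homogeneous of degree $-1$. Expanding $\Yup(-z)=\sum_k\Delta_k(-z)^{-k-1}$, the coefficient of $z^{-1}$ in $z^n(-z)^{-k-1}$ is nonzero only for $k=n$. This gives
$$\tau\circ\Delta_0(v)=-\Delta_0(v)+\sum_{n\geq1}\frac{(-1)^{n+1}}{n!}\Delta_n\big((D^*)^nv\big).$$
Applying $\pi\otimes\pi$, the first term yields $\tau\circ\Delta=-\Delta$. What remains is to show the correction terms die in $\overline{V}\otimes\overline{V}$. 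For $v\in V_1$ we have $(D^*)^nv\in V_{1-n}$, and $\Delta_n$ raises degree by $n+1$, so these terms do land in total degree $2$.

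This is the step I expect to be the main obstacle: a priori $\Delta_n((D^*)^nv)$ can have a nonzero $V_1\otimes V_1$ component. Dually, $V_1'$ is a Lie algebra under $f_0g$ only modulo $L(-1)V_0'$. My first attempt is to use the counit conditions (\ref{CI3.2}), (\ref{CI3.3}) together with the identification $D^*=L(1)$ from the operator form of Lemma \ref{L2.1}, and show that the $V_1\otimes V_1$ components of $\Delta_n(L(1)^nv)$ cancel after projection. If that fails, I would restrict to $v\in V_1\cap\ker L(1)$, which is exactly the dual of $V_1'/L(-1)V_0'$, and note that the quotient must be interpreted this way.

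\emph{Co-Jacobi identity.} Apply $\Res_{z_0}\Res_{z_1}\Res_{z_2}$ to the Jacobi identity (\ref{JI3.4}). Since $\Res_{z_0}z_0^{-1}\delta(\cdot)$ gives the delta functions' constant terms, we obtain
$$(\Id\otimes\Delta_0)\circ\Delta_0-(\tau\otimes\Id)\circ(\Id\otimes\Delta_0)\circ\Delta_0=(\Delta_0\otimes\Id)\circ\Delta_0.$$
Project with $\pi^{\otimes3}$; degree counting shows that only $V_1$-components of intermediate factors contribute. Then rewrite $(\Delta\otimes\Id)\circ\Delta$ using antisymmetry as $-(\tau\otimes\Id)\circ(\Id\otimes\tau)\circ(\Id\otimes\Delta)\circ\Delta$, up to a reordering of the form $\xi^2(\Id\otimes\Delta)\Delta$. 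Substituting this and the identities $\xi=(\Id\otimes\tau)\circ(\tau\otimes\Id)$ and $\xi^2=(\tau\otimes\Id)\circ(\Id\otimes\tau)$ turns the three-term relation into
$$(\Id+\xi+\xi^2)\circ(\Id\otimes\Delta)\circ\Delta=0,$$
which is the dual of the usual derivation of the Jacobi identity from the commutator formula. This last step is bookkeeping of permutations once antisymmetry is in hand.
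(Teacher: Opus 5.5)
\textbf{There is a genuine gap, and it cannot be closed: as stated, the proposition is false.}

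The gap is the one you flagged. Your antisymmetry step is never completed, and your ``first attempt'' via the counit conditions cannot succeed. Your formula $\tau\circ\Delta_0(v)=-\Delta_0(v)+\sum_{n\geq1}\frac{(-1)^{n+1}}{n!}\Delta_n((D^*)^nv)$ is correct. The trouble is that $\Delta_n$ for $n\geq1$ does not descend to $\overline{V}$. It sends $V_{1-n}$, which is killed in $\overline{V}$, into $(V\otimes V)_2$, whose $V_1\otimes V_1$-part survives. The paper's proof follows your route and discards these terms ``since $D^*v=0$ in $\overline{V}$'', which is exactly this illegitimate descent.

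Here is a counterexample.
\begin{itemize}
\item[(1)] Let $W=V_{\mathbb{Z}\alpha}$ with $\langle\alpha,\alpha\rangle=2$, and take the conformal vector $\omega+\frac{1}{2}\alpha(-2)\mathbf{1}$. The grading operator becomes $L(0)-\frac{1}{2}\alpha(0)$, so $e^{m\alpha}$ has weight $m^2-m$, while $L(-1)$ is unchanged.
\item[(2)] $W$ is a vertex operator algebra in the sense of Section 2, with $W_0=\Span\{\mathbf{1},e^{\alpha}\}$ and $W_1=\Span\{a,b\}$, where $a=\alpha(-1)\mathbf{1}$ and $b=L(-1)e^{\alpha}$.
\item[(3)] One computes $a_0b=L(-1)\alpha(0)e^{\alpha}=2b$, while $b_0a=0$ because $(L(-1)x)_0=0$.
\item[(4)] By Proposition \ref{VC-VA}(i), $V=W'$ is a graded vertex operator coalgebra with $\overline{V}=W_1^*$. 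With $\{a^*,b^*\}$ the dual basis, $\Delta(b^*)=2\,a^*\otimes b^*$ and $\Delta(a^*)=0$. This is not antisymmetric.
\item[(5)] In your formula, $D^*b^*=(e^{\alpha})^*\neq0$ in $V_0$, using the dual basis of $\{\mathbf{1},e^{\alpha}\}$. The $V_1\otimes V_1$-component of $\Delta_1((e^{\alpha})^*)$ is $2(a^*\otimes b^*+b^*\otimes a^*)$, which is precisely $\tau\Delta(b^*)+\Delta(b^*)$.
\end{itemize}

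\textbf{Your fallback is the right theorem, but it is a different statement and needs one more step.}

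Replace $\overline{V}$ by $V_1\cap\ker D^*$. Use $D^*$, not $L(1)$: only a vertex coalgebra is assumed, and the two agree in the operator case by Lemma \ref{L2.1}. Alternatively, add a hypothesis forcing $D^*V_1=0$, such as $\dim V_0=1$. For $v\in\ker D^*$, skew-symmetry gives $\tau\circ\Delta_0(v)=-\Delta_0(v)$ exactly.

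You must also show that $(\pi\otimes\pi)\circ\Delta_0$ maps $V_1\cap\ker D^*$ into $(V_1\cap\ker D^*)^{\otimes2}$. This follows from $(D^*\otimes\Id)\circ\Delta_0=0$ and $(\Id\otimes D^*)\circ\Delta_0=\Delta_0\circ D^*$, which are the duals of $(Df)_0=0$ and $g_0Df=D(g_0f)$ in $V'$.

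Closure is not cosmetic, because the co-Jacobi step uses antisymmetry twice:
\begin{itemize}
\item[(a)] at $v$, to get $\xi\circ(\Id\otimes\Delta)\circ\Delta=-(\Delta\otimes\Id)\circ\Delta$, with $\xi=(\Id\otimes\tau)\circ(\tau\otimes\Id)$ (this is $\xi$, not the $\xi^2$-type reordering you wrote);
\item[(b)] at the second tensor factors of $\Delta(v)$, to get $\xi^2\circ(\Id\otimes\Delta)\circ\Delta=-(\tau\otimes\Id)\circ(\Id\otimes\Delta)\circ\Delta$.
\end{itemize}
With these, the residue of (\ref{JI3.4}) and your degree count finish the proof as in the paper.
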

\proof By skew-symmetry identity, for any $v\in \overline{V}, $we have
  \begin{eqnarray*}
    \tau\circ\Delta(v)&=&\Res_z\tau\circ\Yup(z)(v)=\Res_z\Yup(-z)\circ\exp^{zD^*}(v)\\
    &=&\sum_{k\geq0}(-1)^{-k-1}\frac{\Delta_k\circ (D^*)^k}{i!}(v)=-\Delta_0(v)=-\Delta(v).
  \end{eqnarray*}
The fourth identity follows since $D^*v=0$ in $\overline{V}$.

In Jacobi identity (\ref{JI3.4}), by taking $\Res_{z_0,z_1,z_2}$, we get$$(\Id_V\otimes \Delta_0)\circ\Delta_0-(\tau\otimes\Id_V)\circ(\Id_V\otimes\Delta_0)\circ\Delta_0=(\Delta_0\otimes\Id_V)\circ\Delta_0.$$Hence, on $\overline{V}$, we have
  \begin{eqnarray*}
    \xi\circ(\Id_{\overline{V}}\otimes \Delta_0)\circ\Delta_0&=&(\Id_{\overline{V}}\otimes \tau)\circ(\tau\otimes \Id_{\overline{V}})\circ(\Id_{\overline{V}}\otimes \Delta_0)\circ\Delta_0\\
    &=&(\Delta_0\otimes \Id_{\overline{V}})\circ\tau\circ\Delta_0=-(\Delta_0\otimes \Id_{\overline{V}})\circ\Delta_0,\\
    \xi^2\circ(\Id_{\overline{V}}\otimes \Delta_0)\circ\Delta_0&=&(\tau\otimes \Id_{\overline{V}})\circ(\Id_{\overline{V}}\otimes \tau)\circ(\Id_{\overline{V}}\otimes \Delta_0)\circ\Delta_0\\&=&-(\tau\otimes \Id_{\overline{V}})\circ(\Id_{\overline{V}}\otimes \Delta_0)\circ\Delta_0.
  \end{eqnarray*}
Thus $(\Id_{{\overline{V}}\otimes {\overline{V}}\otimes {\overline{V}}}+\xi+\xi^2)\circ(\Id_{\overline{V}}\otimes \Delta_0)\circ\Delta_0=(\Id_{{\overline{V}}\otimes {\overline{V}}\otimes {\overline{V}}}+\xi+\xi^2)\circ(\Id_{\overline{V}}\otimes \Delta)\circ\Delta=0$ on ${\overline{V}}$. This completes the proof.                                                     $\hfill\Box$

\end{document}